\documentclass[11pt,a4paper,reqno]{amsart}
\usepackage{amsthm}
\usepackage{amsmath}
\usepackage{amscd}
\usepackage{amsfonts,amssymb}
\usepackage{mathtools}
\usepackage[margin=1in]{geometry}
\usepackage{fancybox}
\usepackage{pdflscape}
\usepackage{rotating}
\usepackage{epic,eepic,bbm,longtable, enumitem, stmaryrd}
\usepackage[pagebackref,colorlinks=true,linkcolor=blue,citecolor=blue]{hyperref}
\makeatletter
\let\@wraptoccontribs\wraptoccontribs
\makeatother
\allowdisplaybreaks
\newtheorem{thm}{Theorem}[section]
\newtheorem{cor}[thm]{Corollary}

\newtheorem{lem}[thm]{Lemma}

\newtheorem{prop}[thm]{Proposition}
\theoremstyle{remark}
\newtheorem*{rem}{Remark}

\newcounter{remarkscounter}

\numberwithin{equation}{section}
\newcommand{\A}{\mathbb{A}}

\newcommand{\SL}{\mathrm{SL}}
\newcommand{\ZZ}{\mathbb{Z}}

\newcommand{\QQ}{\mathbb{Q}}

\newcommand{\lto}{\,\longrightarrow\,}
\newcommand{\lmto}{\longmapsto}
\newcommand{\OO}{\mathcal{O}}
\newcommand{\CC}{\mathbb{C}}
\newcommand{\RR}{\mathbb{R}}
\newcommand{\GG}{\mathbb{G}}

\newcommand{\quash}[1]{}

\theoremstyle{definition}

\renewcommand{\bar}{\overline}

\numberwithin{equation}{section}

\renewcommand{\hat}{\widehat}

\newcommand{\one}{\mathbbm{1}}

\newenvironment{psmatrix}
  {\left(\begin{smallmatrix}}
  {\end{smallmatrix}\right)}

\begin{document}

\title{Geometrization of summation formulae for quadrics}

\author{Chun-Hsien Hsu}
\address{Department of Mathematics\\
University of Chicago\\
Chicago, IL 60637}
\email{chunhsien@uchicago.edu}

\subjclass[2020]{Primary 11F70; Secondary 11E12}
\keywords{Poisson summation conjecture, minimal representations, Schwartz spaces}

\begin{abstract}
We geometrize the Poisson summation formula for the zero locus of a split quadratic form in an even number of variables over number fields. We do so by making explicit the relationship between Schwartz spaces on quadrics defined in two different ways: via Braverman-Kazhdan spaces and via theta lifts.
\end{abstract}

\maketitle

\tableofcontents

\section{Introduction}

Let $E$ be a number field and $\mathbb{A}_E$ be its ring of adeles. Let $\psi:E\backslash \A_E\to \CC^\times$ be a nontrivial additive character. For $\ell\ge 1,$ let $(V_\ell,Q_\ell)$ be a split quadratic space over $E$ of dimension $2\ell$. For simplicity, we fix a coordinate so that $V_\ell=\A^{2\ell}$ and $Q_\ell(v_1,\ldots,v_{2\ell})=\sum_{i=1}^{\ell} v_{2i-1}v_{2i}.$ Let $X_\ell$ be the vanishing locus of $Q_\ell$ and $X_\ell^{\circ}:=X_\ell-\{0\}$ be the smooth locus of $X_\ell$. Let $e_{2\ell}:=(0,0,\ldots,0,1)\in X_\ell^\circ(E).$

Suppose $\ell\ge 2$. There is a  minimal representation $(\sigma_\ell,\mathcal{S}(X_\ell(\A_E)))$ of $\mathrm{O}_{V_{\ell+1}}(\mathbb{A}_E).$  For $f\in \mathcal{S}(X_\ell(\A_E))$ and $g\in \mathrm{O}_{V_\ell}(\A_E),$ the integral 
$$f_{s}(g):=\int_{\A_E^\times} |t|^{s+\ell-1} f(e_{2\ell}tg)d^\times t$$  
converges absolutely for $\mathrm{Re}(s)$ large, and it extends to a meromorphic function on $\CC$. Let $P_\ell$ be the stabilizer of the line spanned by $e_{2\ell}$. The degenerate Eisenstein series
\begin{align*}
    \mathrm{Eis}(g;f_{s}):=\sum_{\xi \in P_\ell(E)\backslash \mathrm{O}_{V_\ell}(E)} f_{s}(\xi g)
\end{align*}
converges absolutely for $\mathrm{Re}(s)$ large, and defines a meromorphic function on $\CC$. For ease of notation, we write $\mathrm{Eis}(f_{s}):=\mathrm{Eis}(I_{2\ell};f_{s}).$  For $\ell\ge 3,$ it is shown in \cite{Hsu:asymptotics, Getz:Quad} that $\mathrm{Eis}(f_{s})$ can only have possible simple poles at $s=\pm1, \pm (\ell-1).$

 The element $\begin{psmatrix}
     I_{2\ell} & &\\
     & & 1\\
     & 1 &
 \end{psmatrix}\in \mathrm{O}_{V_{\ell+1}}(E)$ defines a unitary Fourier transform
\begin{align*}
    \mathcal{F}_{X_\ell}=\mathcal{F}_{X_\ell,\psi}:=\sigma_\ell\begin{psmatrix}
     I_{2\ell} & &\\
     & & 1\\
     & 1 &
 \end{psmatrix}:\mathcal{S}(X_\ell(\A_E))\longrightarrow \mathcal{S}(X_\ell(\A_E)).
\end{align*}
 Let $\zeta(s)=\zeta_E(s)$ be the completed zeta function and  $\kappa:=\kappa_E:=\mathrm{Res}_{s=1}\zeta_E(s)$. We have a Poisson summation formula on $X_\ell$.

\begin{thm}[{\cite[Theorem 1.6]{Hsu:asymptotics}}]\label{thm:Eis}
    Suppose $\ell\ge 3$. Let $f\in \mathcal{S}(X_\ell(\A_E)).$ Then
    \begin{align*}
        &\sum_{x\in X_\ell^\circ(E)} f(x)+\frac{1}{\kappa}\bigg(\mathrm{Res}_{s=-(\ell-1)} \mathrm{Eis}(f_{s})+\mathrm{Res}_{s=-1} \mathrm{Eis}(f_{s})\bigg)\\
        &=\sum_{x\in X_\ell^\circ(E)} \mathcal{F}_{X_\ell}(f)(x)+\frac{1}{\kappa}\bigg(\mathrm{Res}_{s=-(\ell-1)} \mathrm{Eis}(\mathcal{F}_{X_\ell}(f)_{s})+\mathrm{Res}_{s=-1} \mathrm{Eis}(\mathcal{F}_{X_\ell}(f)_{s})\bigg).
    \end{align*}
 Moreover, $\mathrm{Eis}(f_{s})$ is entire if $f=\otimes_v f_v$ and there are two finite places $v_1,v_2$ such that  $f_{v_1}\in C^\infty_c(X_\ell^\circ(E_{v_1}))$ and $\mathcal{F}_{X_\ell}(f_{v_2})\in C^\infty_c(X_\ell^\circ(E_{v_2})).$
\end{thm}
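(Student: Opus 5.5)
The plan is to obtain the summation formula from the functional equation of the degenerate Eisenstein series $\mathrm{Eis}(g;f_s)$ via a contour-shift argument. The bridge between $f$ and its Fourier transform is the Mellin transform in the $t$-variable. The element defining $\mathcal{F}_{X_\ell}$ normalizes the $\GG_m$-action (scaling in $X_\ell$) up to inversion. So at the level of local sections one expects an identity of the form
\begin{align*}
\mathcal{F}_{X_\ell}(f)_{s}=M(s)f_{-s},
\end{align*}
where $M(s)$ is the standard intertwining operator for the degenerate principal series $\mathrm{Ind}_{P_\ell}^{\mathrm{O}_{V_\ell}}|\cdot|^{s}$, suitably normalized. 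I would first verify this unramified-place by unramified-place, using the explicit model of $\sigma_\ell$: at a place, $f_s$ is a Tate-type integral against the minimal representation, and the Weyl element acts by a Fourier-type transform. Combined with the Langlands functional equation $\mathrm{Eis}(M(s)\phi_s)=\mathrm{Eis}(\phi_{s})$ (up to the global normalization), this gives
\begin{align*}
\mathrm{Eis}(f_s)=\mathrm{Eis}(\mathcal{F}_{X_\ell}(f)_{-s}).
\end{align*}

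Next I would relate the sum $\sum_{x\in X^\circ_\ell(E)}f(x)$ to a Mellin integral of the Eisenstein series. Since $\mathrm{O}_{V_\ell}(E)$ acts transitively on $X^\circ_\ell(E)$ with stabilizer of $e_{2\ell}$ contained in $P_\ell$, unfolding gives
\begin{align*}
\sum_{x\in X_\ell^\circ(E)} f(x)=\frac{1}{\kappa}\cdot\frac{1}{2\pi i}\int_{\mathrm{Re}(s)=\sigma}\mathrm{Eis}(f_s)\,ds \quad (\sigma\gg0),
\end{align*}
with the factor $1/\kappa$ arising from the Mellin inversion on $E^\times\backslash\A_E^\times$: the volume of the norm-one idele class group is $\kappa$ after normalizing measures. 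I would check this by Mellin-inverting the function $t\mapsto \sum_{\xi} f(e_{2\ell}t\xi)$ on $E^\times\backslash \A_E^\times$.

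I would then shift the contour to $\mathrm{Re}(s)=-\sigma$. This needs the pole information quoted from \cite{Hsu:asymptotics, Getz:Quad}: only simple poles at $\pm1,\pm(\ell-1)$ for $\ell\ge3$. It also needs polynomial growth of $\mathrm{Eis}(f_s)$ in vertical strips, which I would obtain from the constant term expressed via $M(s)$ and standard bounds (Phragm\'en--Lindel\"of). Applying the functional equation and substituting $s\mapsto -s$ turns the shifted integral into the same unfolding expression for $\mathcal{F}_{X_\ell}(f)$. The residues at $s=1,\ell-1$ for $f$ become, after the substitution, residues of $\mathrm{Eis}(\mathcal{F}_{X_\ell}(f)_s)$ at $s=-1,-(\ell-1)$, and the remaining residues of $f$ are at $-1,-(\ell-1)$. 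Collecting terms gives exactly the symmetric identity. The \textbf{main obstacle} is the precise local identity $\mathcal{F}_{X_\ell}(f)_s=M(s)f_{-s}$, including its normalizing factors at ramified and archimedean places. I would attack it by reducing to a dense set of test functions, namely products with $f_v\in C_c^\infty(X^\circ_\ell(E_v))$, where both sides can be computed directly.

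For the entirety statement, I would use the constant term along $P_\ell$, whose poles control those of $\mathrm{Eis}$. The constant term equals $f_s(g)+M(s)f_s(g)$, plus possibly a middle term from the other Bruhat cell, each involving Tate-type zeta integrals. If $f_{v_1}\in C^\infty_c(X^\circ_\ell)$, then $f_{v_1,s}$ is entire and in fact annihilates the local $L$-factors producing poles at $s=1,\ell-1$. Likewise $\mathcal{F}_{X_\ell}(f_{v_2})$ compactly supported kills the poles of $M(s)f_s=\mathcal{F}(f)_{-s}$ at $-1,-(\ell-1)$. Having two places allows one to handle both families of poles at once.
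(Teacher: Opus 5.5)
The paper does not reprove this statement; it quotes it from \cite{Hsu:asymptotics}, where it is obtained by the Mellin/Eisenstein-series method of \cite{Getz:Hsu:Leslie}. Your overall strategy (functional equation plus contour shift) is the one used there. However, the unfolding step is wrong as written, and repairing it is a genuine part of the proof.

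Put $\Theta_f(t):=\sum_{x\in X_\ell^\circ(E)}f(tx)$ for $t\in\A_E^\times$. Since $P_\ell(E)\backslash \mathrm{O}_{V_\ell}(E)\cong X_\ell^\circ(E)/E^\times$, one has
\begin{align*}
\mathrm{Eis}(f_s)=\int_{E^\times\backslash\A_E^\times}|t|^{s+\ell-1}\Theta_f(t)\,d^\times t,\qquad \frac{1}{2\pi i}\int_{\mathrm{Re}(s)=\sigma}\mathrm{Eis}(f_s)\,ds=\int_{E^\times\backslash\A_E^1}\Theta_f(u)\,du .
\end{align*}
Inverting along a single vertical line only undoes the $\RR_{>0}$-direction. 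It therefore returns $\kappa$ times the \emph{average} of $\Theta_f$ over the compact group $E^\times\backslash\A_E^1$, not $\kappa\,\Theta_f(1)$, and $\Theta_f$ is not constant on that group in general. As it stands, your argument proves only the $E^\times\backslash\A_E^1$-averaged identity.

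To reach $\sum_x f(x)$ itself you must also Fourier-expand on $E^\times\backslash\A_E^1$:
\begin{itemize}
\item Introduce $f_{\chi_s}(g)=\int_{\A_E^\times}\chi(t)|t|^{s+\ell-1}f(e_{2\ell}tg)\,d^\times t$ for all characters $\chi$ of $E^\times\backslash\A_E^1$, extended trivially on a fixed copy of $\RR_{>0}$.
\item Write $\sum_x f(x)=\frac{1}{\kappa}\sum_\chi\frac{1}{2\pi i}\int_{\mathrm{Re}(s)=\sigma}\mathrm{Eis}(f_{\chi_s})\,ds$.
\item Prove the functional equation for every $\chi$. It exchanges $\chi$ and $\chi^{-1}$, so your local identity is needed for ramified characters too.
\item Show that the twisted series contribute no residues for $\chi\neq 1$, since the Hecke $L$-functions governing their poles are entire. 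This is exactly why only untwisted residues appear in the statement.
\item Justify interchanging $\sum_\chi$ with the contour shift by bounds uniform in $\chi$: bounded conductor at finite places (by smoothness) and rapid decay in the archimedean parameter.
\end{itemize}
This is also the real source of the factor $1/\kappa=1/\mathrm{vol}(E^\times\backslash\A_E^1)$.

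Two smaller points. First, in the entirety argument the pole bookkeeping is reversed. With this normalization, the basic function satisfies $(b_{\ell,v})_s=\zeta_v(s+1)\zeta_v(s+\ell-1)$ times the normalized spherical section. So the term $f_s$ of the constant term carries the poles at $s=-1,-(\ell-1)$, and it is the condition at $v_1$ that removes them. The term $M(s)f_s=\mathcal{F}_{X_\ell}(f)_{-s}$ carries the poles at $s=1,\ell-1$, removed by the condition at $v_2$. You also leave the middle term of the constant term (an Eisenstein series on $\mathrm{O}_{V_{\ell-1}}$) untreated.

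Second, do the sign bookkeeping rather than asserting it comes out ``exactly''. Because $\mathrm{Res}_{s=a}\mathrm{Eis}(f_s)=-\mathrm{Res}_{s=-a}\mathrm{Eis}(\mathcal{F}_{X_\ell}(f)_s)$, shifting from $\mathrm{Re}(s)=\sigma$ to $\mathrm{Re}(s)=-\sigma$ produces the residues at $s=-1,-(\ell-1)$ with coefficient $-\frac{1}{\kappa}$ on both sides. This matches Tate's thesis, where $\int_{\A_E^\times}f(t)|t|^s\,d^\times t$ has residue $-\kappa f(0)$ at $s=0$. You need to reconcile this with the sign convention in the displayed formula.
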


As explained in ibid., the residues, which we will refer to as the \textbf{boundary terms}, are related to asymptotics of $f$ toward the origin. More precisely, there are $\mathrm{O}_{V_{\ell}}(\A_E)$-equivariant boundary maps 
\begin{align*}
    \mathcal{S}(X_\ell(\A_E))\xrightarrow{\,\,(c_\ell,d_\ell)\,\,} \CC\oplus\mathcal{S}(X_{\ell-1}(\A_E)) 
\end{align*}
obtained by taking properly normalized germs of functions in $\mathcal{S}(X_\ell(\A_E))$ at the origin (see \S \ref{sec:mainproof} and its local counterparts in \S \ref{sec:BK}). Then Theorem \ref{thm:Eis} and its proof say that $\mathrm{Res}_{s=-(\ell-1)} \mathrm{Eis}(f_{s})=0$ iff $c_\ell(f)=0$ and $\mathrm{Res}_{s=-1} \mathrm{Eis}(f_{s})=0$  iff $d_\ell(f)=0.$ Nevertheless, it is unclear how to express the boundary terms geometrically when they are nonzero. 

 In the function field case, a complete geometric description of the residues in terms of the boundary maps was established in \cite{GK:auto} using the automorphy of $\sigma_\ell$ and the study of the space of automorphic linear functionals on $\mathcal{S}(X_\ell(\A_E)).$ The number field case requires a substantially different approach because of the archimedean places. A crucial input is provided by \cite{Getz:Quad}. In ibid., using theta lifts of the trivial representation of $\SL_2(\A_E)$, Getz introduced a function space $\widetilde{\mathcal{S}}(X_\ell(\A_E))$ for $\ell\ge 1$ together with a linear map $$I:\widetilde{\mathcal{S}}(X_\ell(\A_E))\longrightarrow C^\infty(X_\ell^\circ(\A_E)),$$ and established a geometric version of Theorem \ref{thm:Eis} for functions of the form $I(f)$ where $f\in \widetilde{\mathcal{S}}(X_\ell(\A_E))$. However, it is not clear if $I(f)\in \mathcal{S}(X_\ell(\A_E))$ and if the formula descends, i.e., whether it depends only on $I(f)$. In the present paper, we answer both questions affirmatively by comparing Fourier theories on $\mathcal{S}(X_\ell(E_v))$ and $\widetilde{\mathcal{S}}(X_\ell(E_v))$ over all places $v$ of $E$.

We now introduce some further notation needed to state the formula. Define $\mathcal{S}(X_1(\A_E)):=I(\widetilde{\mathcal{S}}(X_1(\A_E))).$ For $\ell= 2$, we show that there are boundary maps 
\begin{align*}
    \mathcal{S}(X_2(\mathbb{A}_E))&\xrightarrow{\,\,(c_2,d_2)\,\,} \CC\oplus\mathcal{S}(X_{1}(\A_E)).
\end{align*}
In this case an Eisenstein series may have a pole of order $2$ at $s=-1$, and $c_2$ is its (normalized) coefficient of $(s+1)^{-2}$. Due to the fact that the residue at $s=-1$ only concerns the coefficient of $(s+1)^{-1},$ there is an additional boundary map
$$a_2:\mathcal{S}(X_2(\A_E))\longrightarrow\CC$$ 
defined in \eqref{eq:adelica2}. 

Let $\CC_1$ be the representation $|\cdot|$ of $\mathbb{A}_E^\times.$ Observe that we have an automorphism of $X_1$ induced by the action of $\begin{psmatrix}
    0 & 1\\
    1 & 0
\end{psmatrix}\in \mathrm{O}_{V_1}(E)$ on $V_1$ by swapping two entries. There are two boundary maps 
\begin{align*}
    \mathcal{S}(X_{1}(\A_E))&\xrightarrow{\quad d_1\quad} \CC_1,\\
    \mathcal{S}(X_{1}(\A_E))&\xrightarrow{\quad d_1':=d_1\circ \begin{psmatrix}
    0 & 1\\
    1 & 0
\end{psmatrix}\quad} \CC_1,
\end{align*}
which correspond to asymptotics toward the origin along different axis.

Suppose $\ell\ge 2.$ For $\ell> i\ge 1,$ let
\begin{align*}
    d_{\ell,i}:=d_{i+1}\circ\cdots \circ d_\ell:\mathcal{S}(X_\ell(\A_E))\longrightarrow\mathcal{S}(X_{i}(\A_E)).
\end{align*}
Let $d_{\ell,\ell}$ be the identity map. Let $\infty$ be the set of archimedean places of $E$ and $|\infty|$ be its cardinality. Set
\begin{align*}
    \kappa':=\kappa'_E:=\frac{d}{ds}s\zeta(s)\bigg|_{s=0}.
\end{align*}
Our main result is the following formula.
\begin{thm}\label{thm:main}
    Suppose $E$ is a number field and $\ell\ge 2$.  Let $D\in \ZZ$ be the absolute discriminant of $E$. Let $f\in \mathcal{S}(X_{\ell}(\A_E)).$ Then
    \begin{align*}
        &\sum_{\xi\in X_\ell^\circ(F)}f(\xi)\\
        &+\sum_{i=3}^{\ell} |D|^{\frac{(4-\ell-i)(\ell-i)}{2}}\left(\sum_{\xi\in X_{i-1}^\circ(E)} |D|^{\frac{5}{2}-i}d_i\circ d_{\ell,i}(f)(\xi)+|D|^{i-\frac{3}{2}}\zeta(i-1)c_i\circ d_{\ell,i}(f)\right)\\
        &+2^{1-|\infty|}|D|^{-\frac{(\ell-2)^2}{2}}\left(\kappa'c_2\circ d_{\ell,2}(f)+|D|^{\frac{1}{2}}\kappa a_2\circ d_{\ell,2}(f)\right)\\
        &+\sum_{\xi\in X_1^\circ(E)}
        |D|^{-\frac{(\ell-3)(\ell-1)}{2}}d_{\ell,1}(f) + \zeta(2)|D|^{-\frac{(\ell-4)\ell}{2}}(d_1+d_1')\circ d_{\ell,1}(f)\\
       &=\sum_{\xi\in X_\ell^\circ(F)}\mathcal{F}_{X_\ell}(f)(\xi)\\
       &+\sum_{i=3}^{\ell} |D|^{\frac{(4-\ell-i)(\ell-i)}{2}}\left(\sum_{\xi\in X_{i-1}^\circ(E)} |D|^{\frac{5}{2}-i}d_i\circ d_{\ell,i}(\mathcal{F}_{X_\ell}(f))(\xi)+|D|^{i-\frac{3}{2}}\zeta(i-1)c_i\circ d_{\ell,i}(\mathcal{F}_{X_\ell}(f))\right)\\
        &+2^{1-|\infty|}|D|^{-\frac{(\ell-2)^2}{2}}\left(\kappa'c_2\circ d_{\ell,2}(\mathcal{F}_{X_\ell}(f))+|D|^{\frac{1}{2}}\kappa a_2\circ d_{\ell,2}(\mathcal{F}_{X_\ell}(f))\right)\\
        &+\sum_{\xi\in X_1^\circ(E)}
        |D|^{-\frac{(\ell-3)(\ell-1)}{2}} d_{\ell,1}(\mathcal{F}_{X_\ell}(f))+\zeta(2)|D|^{-\frac{(\ell-4)\ell}{2}}(d_1+d_1')\circ d_{\ell,1}(\mathcal{F}_{X_\ell}(f)).
    \end{align*}
\end{thm}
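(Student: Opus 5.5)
The plan is to derive Theorem \ref{thm:main} from Getz's summation formula for functions of the form $I(f)$ with $f\in\widetilde{\mathcal{S}}(X_\ell(\A_E))$. This requires three inputs: identifying $I(\widetilde{\mathcal{S}}(X_\ell(\A_E)))$ with $\mathcal{S}(X_\ell(\A_E))$, matching the two Fourier transforms, and rewriting Getz's boundary terms in terms of the boundary maps $c_i,d_i,a_2$.

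\textbf{Step 1: local comparison at each place $v$.} I would show that $I_v$ maps $\widetilde{\mathcal{S}}(X_\ell(E_v))$ onto $\mathcal{S}(X_\ell(E_v))$, as defined via the Braverman--Kazhdan model, and that $I_v$ intertwines the Fourier transform on the theta side with $\mathcal{F}_{X_\ell}$. Both spaces carry an action of $\mathrm{O}_{V_\ell}(E_v)\times E_v^\times$ and contain $C_c^\infty(X_\ell^\circ(E_v))$. Both Fourier transforms are unitary and are given on this dense subspace by the same kind of Bessel-type integral formula. It therefore suffices to check that the two transforms agree on $C_c^\infty(X_\ell^\circ(E_v))$. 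One then checks that the images coincide by comparing asymptotic expansions at the origin. A function in the Braverman--Kazhdan space has a germ governed by $(c_\ell,d_\ell)$, together with $a_2$ when $\ell=2$, and one matches this with the Mellin-transform description of the image of $I_v$. At unramified places one also checks that $I_v$ sends the basic vector to the basic function. This is where the powers of $|D|$ enter: they come from the self-dual measure normalization relative to $\psi$, through the local constants $|D_v|^{?}$.

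\textbf{Step 2: descend the boundary terms.} Getz's formula expresses the boundary contributions through functions on $\widetilde{\mathcal{S}}$ that a priori depend on $f$ and not only on $I(f)$. Using the local asymptotic expansions from Step 1, I would prove a compatibility between Getz's restriction maps and the boundary maps, namely that the relevant map is $I\circ(\text{restriction})=d_\ell\circ I$. I would also prove that the constant terms equal $c_\ell\circ I$ up to explicit constants. Iterating this down the tower $X_\ell\to X_{\ell-1}\to\cdots\to X_1$ yields the terms $d_{\ell,i}$. In the base cases, $\ell=2$ (a double pole, giving $\kappa'$ and the additional $a_2$) and $X_1$ (the two axes, giving $d_1+d_1'$ and $\zeta(2)$), I would compute the Laurent coefficients directly. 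The factor $2^{1-|\infty|}$ is expected to come from the archimedean local coefficients of the second-order term.

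\textbf{Step 3: assemble the formula.} Substitute Steps 1 and 2 into Getz's formula and collect the $|D|$-exponents by induction on $\ell$. Each descent step contributes a shift that, summed over steps, produces the quadratic exponents $\frac{(4-\ell-i)(\ell-i)}{2}$.

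The main obstacle is Step 1 at the archimedean places. There, surjectivity of $I_v$ onto the Braverman--Kazhdan space, and the identification of the asymptotic terms (in particular the logarithmic term producing $a_2$ when $\ell=2$), need delicate estimates on Bessel functions and Mellin transforms. My first attempt would characterize both spaces by the same list of conditions: Mellin transforms along the $E_v^\times$-action that are meromorphic with prescribed poles and decay in vertical strips. Equality of the two spaces would then follow from equality of the two characterizations.
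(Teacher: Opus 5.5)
Your overall architecture (local comparison, descent of Getz's boundary maps, substitution into Theorem \ref{thm:Poisson:lift}) matches the paper, but Step 1 assumes exactly what has to be proved. The operator $\widetilde{\mathcal{F}}_{X_\ell}=\mathrm{id}\otimes\mathcal{F}_{\wedge}$ acts on the coinvariant space $\widetilde{\mathcal{S}}(X_\ell(F))$, not on functions on $X_\ell^\circ(F)$. Nothing a priori guarantees that $I(\widetilde{\mathcal{F}}_{X_\ell}(f))$ depends only on $I(f)$, i.e.\ that $\widetilde{\mathcal{F}}_{X_\ell}(\ker I)\subseteq \ker I$; this is precisely the descent problem raised in the introduction. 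Until it is settled, there is no theta-side transform on $C_c^\infty(X_\ell^\circ(E_v))$ that could be unitary or have a Bessel-type kernel. So ``both transforms are given by the same kind of integral formula'' is an assertion, not a reduction, and agreement on a dense subspace of the image of $I$ says nothing about preimages lying in $\ker I$. Your fallback is to characterize $I(\widetilde{\mathcal{S}}(X_\ell(E_v)))$ and $\mathcal{S}(X_\ell(E_v))$ by identical Mellin-transform conditions. That would at best identify the two spaces; it would not yield $I\circ\widetilde{\mathcal{F}}_{X_\ell}=\mathcal{F}_{X_\ell}\circ I$, which is what converts the right-hand side of Getz's formula. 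A purely local proof would have to establish $I(\widetilde{\mathcal{F}}_{X_\ell}(f))=\mathcal{F}_{X_\ell}(I(f))$ for \emph{every} $f\in W_\ell$ by direct computation against the conditionally convergent formula for $\mathcal{F}_{X_\ell}$. This would suffice, since $W_\ell+\widetilde{\mathcal{F}}_{X_\ell}(W_\ell)=\widetilde{\mathcal{S}}(X_\ell(F))$ and both transforms are involutions, but your proposal contains no such computation.

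The paper closes this gap by a global argument rather than a local characterization of the spaces. In Lemma \ref{lem:des} one realizes $F=E_v$, takes $f_v\in\ker I$, and picks data at two auxiliary finite places so that all boundary terms in Theorem \ref{thm:Poisson:lift} vanish. Namely, $f_{v_1}\in W_\ell(E_{v_1})$ with $I(f_{v_1})(0)=0$ and $I(\widetilde{\mathcal{F}}_{X_\ell}(f_{v_1}))(\xi_0)\neq 0$ (Lemma \ref{lem:nonzero}), and $f_{v_2}$ with $\widetilde{\mathcal{F}}_{X_\ell}(f_{v_2})\in W_\ell(E_{v_2})$ and $I(\widetilde{\mathcal{F}}_{X_\ell}(f_{v_2}))(0)=0$. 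Discreteness of $X_\ell(E)$ then isolates $\xi_0$, and density gives $I(\widetilde{\mathcal{F}}_{X_\ell}(f_v))=0$. With descent in hand, at finite places the resulting $\mathrm{O}_{V_{\ell+1}}(F)$-representation is identified with $\sigma_\ell$. The inputs are the $P_{\ell+1}(F)$-equivariance \eqref{eq:Pequal}, the irreducibility of $\mathcal{S}(X_\ell^\circ(F))$ under $P_{\ell+1}(F)$ (Lemma \ref{lem:Iact}), and the common spherical vector $b_\ell=I(\one_{V_{\ell+1}(\OO)})$. The equivariance includes the unipotent $N_{\ell+1}(F)$ acting by characters, which your $\mathrm{O}_{V_\ell}\times E_v^\times$ bookkeeping omits. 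At archimedean places with $\ell\ge 3$, one compares Getz's formula with Theorem \ref{thm:Eis} globally, using the nonarchimedean agreement to kill boundary terms, to get agreement on $\mathcal{S}_{\mathrm{ES}}(X_\ell(F))$. One then uses $\mathcal{S}(X_\ell(F))=\mathcal{S}_{\mathrm{ES}}(X_\ell(F))+\mathcal{F}_{X_\ell}(\mathcal{S}_{\mathrm{ES}}(X_\ell(F)))$, and the case $\ell=2$ is transported from $\ell=3$ through $\tilde{d}_3$. For $\ell\ge 3$, Step 2 is likewise done by a Schur-type argument rather than explicit asymptotics. Lemma \ref{lem:ddescent} and Corollary \ref{eq:dvanish} make $I\circ\tilde{d}_\ell$ factor through $\mathcal{S}(X_\ell(F))/\mathcal{S}_{\mathrm{ES}}(X_\ell(F))\cong\mathcal{S}(X_{\ell-1}(F))$, irreducibility forces a scalar, and Tate's local functional equation identifies it as $\varepsilon(\ell-2,\psi)$. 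The $|D|$-powers are then $\prod_v\varepsilon(\cdot,\psi_v)$ together with the global functional equation of $\zeta$. The factor $2^{1-|\infty|}$ comes from $\frac{d}{ds}\zeta_v(s)^{-1}\big|_{s=0}=\frac{1}{2}$ and $c_{2,v}=2c_{1,v}\circ d_{2,v}$ at archimedean $v$, not from a second-order coefficient.
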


\begin{rem}
    Our boundary maps $c_\ell,d_\ell$ are independent of $\psi,$ while
  boundary maps $d_\ell$  in \cite{GK:auto}, denoted as $\mathcal{B}$ therein, are normalized so that spherical vectors are mapped to spherical vectors, and hence depend on $\psi.$
\end{rem}

By comparing terms between Theorem \ref{thm:Eis} and Theorem \ref{thm:main} for $\ell\ge 3$, we obtain a geometric expression of residues of Eisenstein series:
\begin{align*}
    \mathrm{Res}_{s=-(\ell-1)} \mathrm{Eis}(f_{s})=\kappa|D|^{\ell-\frac{3}{2}}\zeta(\ell-1)c_\ell(f),
\end{align*}
and $\kappa^{-1}\mathrm{Res}_{s=-1}\mathrm{Eis}(f_{s})$ is equal to the sum of the rest of the terms involving $d_\ell$. This expression is akin to regularized Siegel-Weil identities \cite{Ichino:SW,Ikeda:SW,KR:Siegel-Weil}. This indicates that regularized Siegel-Weil identities can be used to geometrize boundary terms at least for Braverman-Kazhdan spaces. For instance, one can rephrase the Poisson summation formulae on the Lagrangian Grassmannian in \cite{Getz:Liu:BK} in a geometric fashion via regularized Siegel-Weil identities. 

The paper is structured as follows. We set up our notations in \S \ref{sec:prelim}. In \S \ref{sec:BK}  we review the local theory of Schwartz spaces on $X_\ell$ over local fields for $\ell\ge 3$ constructed in \cite{Getz:Hsu:Leslie} by viewing $X_\ell$ as Braverman-Kazhdan spaces. Then we review in \S\ref{sec:thetalift} the Fourier theory on $X_\ell$ for general $\ell\ge 1$ defined via theta lift by \cite{Getz:Quad}. We show that the two theories coincide in \S \ref{sec:agree} and \S \ref{sec:agree:arch}. Then we summarize our results in the adelic setting in \S \ref{sec:mainproof} and prove Theorem \ref{thm:main}.

\section{Preliminaries}\label{sec:prelim}

Throughout the paper $F$ denotes a local field of characteristic zero. We let $|\cdot|$ be the number-theorist's norm on $F$. Thus $|\cdot|$ is the usual Euclidean norm if $F=\RR,$ and $|z|=z\bar{z}$ if $F=\CC$. When $F$ is nonarchimedean, we denote by $\OO=\OO_F$ the ring of integers of $F$ and fix once and for all a choice of uniformizer $\varpi$. Then $q=|\varpi|^{-1}$ is the cardinality of the residue field $\OO/\varpi$. 

Vectors in $F^n$ are row vectors. For nonarchimedean $F,$ let $|\cdot|$ be the box norm on $F^n$ given by
\begin{align*}
    |(v_1,\ldots,v_n)|:=\max_{1\le i\le n} |v_i|.
\end{align*}
When $F$ is archimedean, let $|\cdot|_\RR$ be the usual Euclidean norm on $F^n$ given by
\begin{align*}
     |(v_1,\ldots,v_n)|_\RR:=\left(\sum_{i=1}^n v_i\overline{v}_i\right)^{\frac{1}{2}},
\end{align*}
and let $|\cdot|:=|\cdot|_{\RR}^{[F:\RR]}.$ For a subset $X$ of $F^n,$ let $X^1$ be the set of vectors in $X$ of norm $1$.

\subsection{Measure}

Given a nontrivial additive character $\psi:F\to \CC^\times,$ the Haar measure $dt$ on $F$ will always be normalized so that the Fourier transform on $\mathcal{S}(F)$ defined by $\psi$ is self-dual. Let
\begin{align*}
    \zeta(s):=\begin{cases}
        \frac{1}{1-q^{-s}} & \textrm{if } F \textrm{ is nonarchimedean,}\\
        \pi^{-s/2}\Gamma(s/2) & \textrm{if } F=\RR,\\
        2(2\pi)^{-s}\Gamma(s) &\textrm{if } F=\CC,
    \end{cases}
\end{align*}
be the local zeta function. We set $d^\times t:=\frac{\zeta(1)dt}{|t|}$. It is a Haar measure on $F^\times$. Let $K_{\GG_m}$ be the maximal compact subgroup of $F^\times.$ We denote by $\mathrm{vol}(K_{\GG_m})$ the constant such that for $K_{\GG_m}$-invariant functions $f\in \mathcal{S}(F^\times)$
\begin{align*}
    \int_{F^\times} f(t)d^\times t=\mathrm{vol}(K_{\GG_m})\int_{F^\times/K_{\GG_m}} f(r)dr,
\end{align*}
where $dx$ is the counting measure on $F^\times/K_{\GG_m}=\ZZ$ when $F$ is nonarchimedean, and $rdr$ is the Lebesgue measure on $F^\times/K_{\GG_m}=\RR_{>0}$ when $F$ is archimedean.

Let $T<B$ be the group of diagonal matrices and the group of upper triangular matrices in $\SL_2$, respectively. Let $N$ be the unipotent radical of $B$. Since $T\cong \GG_m$ and $N\cong \GG_a$, we equip $T(F)$ and $N(F)$ with measures
transferred from those on $F$ and $F^\times$. Let 
\begin{align*}
    K:=\begin{cases}
        \SL_2(\OO) & \textrm{if } F \textrm{ is nonarchimedean,} \\
        \mathrm{SO}_2(\RR) & \textrm{if } F=\RR,\\
        \mathrm{SU}_2(\RR) &\textrm{if } F=\CC.
    \end{cases}
\end{align*}
We normalize the Haar measure on $\SL_2(F)$ so that by the Iwasawa decomposition
\begin{align*}
    d\left(\begin{psmatrix}
        1 & n\\
          & 1
    \end{psmatrix}\begin{psmatrix}
        t & \\
          & t^{-1}
    \end{psmatrix}k\right)=dn\frac{d^\times t}{|t|^2}dk,
\end{align*}
where $dk(K)=1$. This choice of Haar measure agrees with that in \cite{Getz:Quad}.

\subsection{Schwartz space}\label{ssec:Schwartz}

For each real algebraic variety $X$ (over $\RR$), a Schwartz space $\mathcal{S}_{\mathrm{ES}}(X)$ is defined in \cite[Definition 3.7]{Elazar:Shaviv} (generalizing the previous work in \cite{AG:Nash}). In the case $X$ is affine, one can embed $X$ as a closed subset of $\RR^n$ in the category of real algebraic varieties. Then
\begin{align*}
    \mathcal{S}_{\mathrm{ES}}(X):= \mathcal{S}(\RR^n)/I=\mathcal{S}(\RR^n)|_{X}
\end{align*}
where $\mathcal{S}(\RR^n)$ is the usual space of Schwartz functions on $\RR^n,$ and $I \le \mathcal{S}(\RR^n)$ is the (closed) ideal of functions that vanish identically on $X$. The space $\mathcal{S}_{\mathrm{ES}}(X)$ endowed with the quotient topology is a nuclear Fr\'echet space. It does not depend on the choice of embeddings. 

Let $X$ be a quasi-projective $F$-variety with nonempty $X(F)$. When $F$ is archimedean, we let
\begin{align*}
    \mathcal{S}_{\mathrm{ES}}(X(F)):=\mathcal{S}_{\mathrm{ES}}(\mathrm{Res}_{F/\RR}X(\RR)).
\end{align*}
When $F$ is nonarchimedean, following the notation in the archimedean case, we define 
$$\mathcal{S}_{\mathrm{ES}}(X(F)):=C^\infty_c(X(F))$$ to be the space of locally constant functions on $X(F)$ with compact support. If $X$ is affine, then for any closed $F$-embedding $X\lhook\joinrel\xrightarrow{\,\,} \mathbb{A}^n,$ we have
\begin{align*}
    \mathcal{S}_{\mathrm{ES}}(X(F))=\mathcal{S}(F^n)\big|_{X(F)}.
\end{align*}
In any case, when $X$ is a smooth variety, we write
\begin{align*}
    \mathcal{S}(X(F)):=\mathcal{S}_{\mathrm{ES}}(X(F)).
\end{align*}
For singular $X$, we only define $\mathcal{S}(X(F))$ case-by-case. When $F$ is archimedean, $\mathcal{S}(X(F))$ in this paper is always a nuclear Fr\'echet space.

Let $E$ be a number field and $X$ be an affine $E$-variety. Assume $\mathcal{S}(X(E_v))$ is defined for all places $v$ and a basic function $b_v \in \mathcal{S}(X(E_v))$
is chosen for almost all $v$. Let $S$ be a finite set of places of $E$. If $S$ contains all infinite places, we define $$\mathcal{S}(X(\A_{E}^S)):=\bigotimes_{v\not\in S}{}^{\prime}
\mathcal{S}(X(E_v))$$ to be the restricted
tensor product taken with respect to $b_v$. If $S$ is a subset of infinite places of $E$, then
$\mathcal{S}(X(E_S)) := \widehat{\bigotimes}_{v\in S}\mathcal{S}(X(E_v))$ is the completed projective topological tensor product. For
general $S$, we put
\begin{align*}
\mathcal{S}(X(\A_E^S
)) := \mathcal{S}(X(E_{\infty-S}))\otimes \mathcal{S}(X(\A_E^{\infty\cup S}
)).
\end{align*}

\subsection{A brief review of Tate's thesis}\label{ssec:Tate}

For $f\in \mathcal{S}(F),$ let $\hat{f}$ be the Fourier transform of $f$. For $s\in \CC,$ the zeta integral
\begin{align*}
    Z(f,s):=\int_{F^\times} f(t)|t|^s d^\times t
\end{align*}
converges absolutely for $\mathrm{Re}(s)>0,$ and $\zeta(s)^{-1}Z(f,s)$ is an entire function satisfying a functional equation
\begin{align}\label{eq:Tate}
    \frac{Z(\hat{f},1-s)}{\zeta(1-s)}=\varepsilon(s,\psi)\frac{Z(f,s)}{\zeta(s)}.
\end{align}
Here $\varepsilon(s,\psi)$ is the Tate $\varepsilon$-factor (attached to the trivial character); it is an entire function of the form $Ae^{Bs}$ for some complex numbers $A,B$. It is the constant function $1$ when $F$ is nonarchimedean and $\psi$ is unramified, i.e., $\psi$ has conductor $\OO,$ or when $F$ is archimedean and $\psi(t)=e^{2\pi i\mathrm{tr}_{F/\RR}(t)}.$

Let $E$ be a number field and $\psi=\otimes \psi_v:E\backslash \A_E\to \CC^\times$ be a nontrivial additive chracter. Then $\varepsilon(s):=\prod_{v} \varepsilon(s,\psi_v)$ is independent of the choice of $\psi,$ and $\varepsilon(s)=|D|^{\frac{1}{2}-s}$ where $D\in \ZZ$ is the absolute discriminant of $E$. The (completed) zeta function $\zeta(s)=\prod_{v}\zeta_v(s)$ satisfies the functional equation
\begin{align*}
    \zeta(s)=\varepsilon(s)\zeta(1-s).
\end{align*}

\subsection{Odd-dimensional cones}

Let $E$ be a field of characteristic zero. Let $V_0$ be the zero vector space. For a positive integer $\ell,$ let $(V_\ell,Q_\ell)$ be a split quadratic space over $E$ of dimension $2\ell$. For simplicity, we identify $V_\ell$ with $\A^{2\ell}$ so that under the standard basis $\{e_i\}_{1\le i\le 2\ell}$ for $v=(v_1,\ldots, v_{2\ell})\in V_\ell(F)$
\begin{align*}
    Q_\ell(v)=\sum_{i=1}^{\ell} v_{2i-1}v_{2i}.
\end{align*}
The associated bilinear form is $ \langle v,w\rangle=vJ_\ell w^t,$ where
\begin{align*}
        J_\ell:=\begin{psmatrix}
             J & & &\\
               & J & &\\
               & & \ddots & \\
               & & & J
        \end{psmatrix},\quad \textrm{  and } J:=\begin{psmatrix}
         & 1\\
         1 &   
        \end{psmatrix}.
\end{align*}

Let $\mathrm{O}_{V_\ell}$ be the orthogonal group of $(V_\ell,Q_\ell)$. Let $P_{\ell+1}$ be the stabilizer of the isotropic line spanned by $e_{2\ell+2}$ in $V_{\ell+1}$, and let $M_{\ell+1}$ be a Levi subgroup of $P_{\ell+1}$ given on $R$-points by
\begin{align*}
    M_{\ell+1}(R):=\left\{\begin{psmatrix}
        h &  &\\
          & a &\\
          &  & a^{-1}
    \end{psmatrix}: h\in \mathrm{O}_{V_{\ell}}(R), a\in R^\times\right\}.
\end{align*}
We identify $\mathrm{O}_{V_{\ell}}$ as a subgroup of $M_{\ell+1}$ and hence as a subgroup of $\mathrm{O}_{V_{\ell+1}}$. Let $N_{\ell+1}$ be the unipotent radical of $P_{\ell+1}$. We fix an isomorphism of scheme
\begin{align*}
        u:V_{\ell}&\longrightarrow N_{\ell+1}\\
            v    &\mapsto \begin{psmatrix}
                I_{2\ell} & & J_\ell v^t\\
                    -v       &  1 & Q_\ell(v)\\
                         &  & 1
            \end{psmatrix}.
\end{align*}
We note that our map differs from that in \cite[(2.7)]{Getz:Quad}, since vectors there are taken to be column vectors.

Let $X_\ell$ be the vanishing locus of $Q_\ell$ and $X_\ell^\circ$ be its smooth locus. For $\ell\ge 1$, the origin is the unique singularity of $X_\ell$ so  $X_\ell^\circ=X_\ell-\{0\}$. Let $\mathrm{SO}_{V_\ell}<\mathrm{O}_{V_\ell}$ be the special orthogonal group. For $\ell\ge 2,$ the natural map
\begin{align*}
     \mathrm{SO}_{V_\ell}&\lto X_\ell \\
     g &\longmapsto e_{2\ell}g
\end{align*}
induces an isomorphism of schemes $P_\ell^{\mathrm{der}}\backslash \mathrm{SO}_{V_\ell} \cong X_\ell^\circ$, so  $X_{\ell}$ is the affine closure of $X_\ell^\circ$. We will often identify these spaces.

\section{Fourier theory on $X_\ell$ $(\ell\ge 3)$ as Braverman-Kazhdan spaces}\label{sec:BK}

Let $\ell\ge 3$. Let $\psi:F\to \CC^\times$ be a nontrivial additive character. A (unitary) Fourier transform
\begin{align*}
    \mathcal{F}_{X_{\ell}}=\mathcal{F}_{X_{\ell},\psi}:L^2(X_\ell(F))\lto L^2(X_\ell(F))
\end{align*}
is constructed in \cite{BKnormalized}. Their construction is refined in \cite{Getz:Hsu:Leslie}. The Fourier transform has order $2$, i.e.,
 $\mathcal{F}_{X_\ell}\circ \mathcal{F}_{X_\ell}=\mathrm{Id}.$ Therefore, $\mathcal{F}_{X_{\ell}}$ descends to an automorphism of the Schwartz space 
\begin{align*}
    \mathcal{S}(X_{\ell}(F)):=\mathcal{S}(X_{\ell}^\circ(F))+\mathcal{F}_{X_{\ell}}
(\mathcal{S}(X_\ell^\circ(F)))\subset C^
\infty(X_{\ell}^\circ(F)).
\end{align*}
When $F$ is archimedean, $\mathcal{S}(X_{\ell}(F))$ is equipped with a (nuclear) Fr\'echet topology.
By the work of \cite{Getz:Hsu:Leslie, Hsu:asymptotics} for $f\in \mathcal{S}(X_{\ell}(F))$ and $y\in X_{\ell}^\circ(F)$
\begin{align*}
    \mathcal{F}_{X_{\ell}}(f)(y)=\int_{F^\times} |t|^{\ell-2}\psi(t^{-1}) \left(\int_{X_{\ell}^\circ(F)} f(x)\psi(t\langle x,y\rangle)dx\right) \frac{d^\times t}{\zeta(1)}
\end{align*}
for a properly normalized Radon measure $dx$ on $X_{\ell}^\circ(F)$. When $F$ is nonarchimedean, the same formula is obtained in \cite{GK:cone} by realizing $\mathcal{S}(X_{\ell}(F))$ as the (unitarizable) minimal representation $\sigma_\ell$ of $\mathrm{O}_{V_{\ell+1}}(F)$ contained in the degenerate principal series $\mathrm{Ind}_{P_{\ell+1}}^{\mathrm{O}_{V_{\ell+1}}}(1_{-1}).$ The action of $\mathrm{O}_{V_{\ell+1}}(F)$ on $\mathcal{S}(X_{\ell}(F))$ can be explicitly described as follows \cite{GK:cone}: for $f\in \mathcal{S}(X_\ell(F))$ and $x\in X_{\ell}^\circ(F)$
\begin{align}\label{eq:minimal:act}  
    \begin{split}
    \sigma_\ell(h)f(x)&=f(xh) \quad\quad\quad\quad\quad\quad\quad  \textrm{for } h\in \mathrm{O}_{V_\ell}(F),\\
       \sigma_\ell\begin{psmatrix}
            I_{2\ell} & &\\
            & a &\\
            & & a^{-1}
        \end{psmatrix}f(x)&=|a|^{\ell-1} f(ax) \quad\quad\quad\quad\,\,\,\, \textrm{for }a\in F^\times,\\
        \sigma_\ell(u(v))f(x)&=\overline{\psi}(\langle v,x\rangle)f(x) \quad\quad\quad\,\,\,\,\, \textrm{for }v\in V_{\ell}(F),\\
        \sigma_\ell \begin{psmatrix}
            I_{2\ell} & &\\
             & & 1\\
             &1 &
        \end{psmatrix}f&=\mathcal{F}_{X_\ell}(f).
    \end{split}
\end{align}
The statement is also valid when $F$ is archimedean.  This is proved in \cite[Example 7.10]{Hsu:asymptotics} by computing generators of the Weyl algebra on $X_\ell$ and comparing them with the Bessel operators defined in \cite{Kobayashi:Mano, minrep:real}.

When $\ell=2$,  $X_2$ is not a Braverman-Kazhdan space considered in \cite{Getz:Hsu:Leslie} or \cite{BKnormalized}. We define $\mathcal{S}(X_{2}(F))\subset L^2(X_{2}(F))$ to be the space of smooth vectors in the minimal representation $\sigma_2$ of $\mathrm{O}_{V_3}(F)$ (also of $\SL_4(F)$) that is a subrepresentation of $\mathrm{Ind}_{P_3}^{\mathrm{O}_{V_3}}(1_{-1})$. We refer one to  \cite{SW:minimal} and \cite{minrep:real} for details. We denote by $\mathcal{F}_{X_{2}}$ the action of $\begin{psmatrix}
     & & 1&\\
    & &  & 1\\
    1&  & & \\
    & 1 &  & 
\end{psmatrix}\in \SL_4(F)$ under the minimal representation. The action of $\sigma_2$ is analogous to \eqref{eq:minimal:act} above. When $F$ is nonarchimedean, $\mathcal{S}(X_{2}(F))=\mathcal{S}(X_{2}^\circ(F))+\mathcal{F}_{X_{2}}\left(\mathcal{S}(X_{2}^\circ(F))\right)$ by the argument in \cite[\S 2.2]{GK:cone}.

\begin{thm}\label{thm:asymplge3}
For $\ell \ge 3,$ the $\mathrm{O}_{V_\ell}(F)$-module $\mathcal{S}(X_\ell(F))/\mathcal{S}(X_\ell^\circ(F))$ is isomorphic to 
\begin{align*}
    \begin{cases}
        \CC\oplus\mathcal{S}(X_{\ell-1}(F)) & \textrm{if } F \textrm{ is nonarchimedean,}\\
        \bigg(\CC\oplus \mathcal{S}(X_{\ell-1}(\RR))\bigg)\CC[[X_\ell]] & \textrm{if } F=\RR \textrm{ and } 2\nmid \ell,\\
        \bigg(\CC+ \mathcal{S}(X_{\ell-1}(F))+\CC(-\log |x|)\bigg)\CC[[\mathrm{Res}_{F/\RR}X_\ell]]& \textrm{otherwise}.
    \end{cases}
\end{align*}
In particular, $\mathcal{S}_{\mathrm{ES}}(X_\ell(F))\subseteq \mathcal{S}(X_\ell(F))$ and there are $\mathrm{O}_{V_\ell}(F)$-equivariant maps
\begin{align*}
    \mathcal{S}(X_\ell(F))/\mathcal{S}(X_\ell^\circ(F))\xrightarrow{(c_\ell,d_\ell)} \CC\oplus \mathcal{S}(X_{\ell-1}(F)). 
\end{align*}
\end{thm}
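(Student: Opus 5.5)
The plan is to analyze the asymptotics of $\mathcal{F}_{X_\ell}(f)$ near the origin for $f\in\mathcal{S}(X_\ell^\circ(F))$. Since $\mathcal{S}(X_\ell(F))=\mathcal{S}(X_\ell^\circ(F))+\mathcal{F}_{X_\ell}(\mathcal{S}(X_\ell^\circ(F)))$, the quotient $\mathcal{S}(X_\ell(F))/\mathcal{S}(X_\ell^\circ(F))$ is the image of $\mathcal{S}(X_\ell^\circ(F))$ under $f\mapsto \mathcal{F}_{X_\ell}(f)$, taken modulo functions that are Schwartz on the smooth locus. All such functions have the same (rapid) decay at infinity. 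Hence it suffices to describe the germ at $0$ of $\mathcal{F}_{X_\ell}(f)$.

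The first step is to use the Mellin transform along the $\GG_m$-action $x\mapsto ax$, which is compatible with $\sigma_\ell$ by \eqref{eq:minimal:act}. Write
\[
\mathcal{F}_{X_\ell}(f)_s(g)=\int_{F^\times}|a|^{s+\ell-1}\mathcal{F}_{X_\ell}(f)(ae_{2\ell}g)\,d^\times a .
\]
The Weyl element intertwines the Mellin transforms with a local functional equation of the form $\mathcal{F}_{X_\ell}(f)_s=\gamma(s)\,M(s)f_{-s}$, where $M(s)$ is the standard intertwining operator on the degenerate principal series of $\mathrm{O}_{V_\ell}$ induced from $P_\ell$. The scalar $\gamma(s)$ is a product of Tate gamma factors obtained from the explicit formula for $\mathcal{F}_{X_\ell}$ via \eqref{eq:Tate}. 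Mellin inversion, after shifting the contour to the left, then expresses the germ of $\mathcal{F}_{X_\ell}(f)$ at $0$ as a sum of residues at the poles of $\gamma(s)M(s)$, plus a remainder that is Schwartz on $X_\ell^\circ$.

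Next I identify the poles and residues. The pole at $s=-(\ell-1)$ comes from a zeta factor, and its residue is a constant functional $c_\ell(f)$ times $|x|^{0}$; this gives the summand $\CC$. The pole at $s=-1$ comes from $M(s)$, whose residue factors through the constant term along $N_\ell$. Restricting to $X_{\ell-1}\subset V_{\ell-1}=e_{2\ell}^\perp/e_{2\ell}$, this produces a function on $X_{\ell-1}^\circ(F)$, which is $d_\ell(f)$. I will show that $d_\ell(f)$ lies in $\mathcal{S}(X_{\ell-1}(F))$ by checking that it is again of the form $\phi+\mathcal{F}_{X_{\ell-1}}(\phi')$. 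To do this I compute the residue directly from the integral formula for $\mathcal{F}_{X_\ell}$: integrating in the fibre direction is a partial Fourier transform in one variable, and what remains is exactly the formula for $\mathcal{F}_{X_{\ell-1}}$. The $\mathrm{O}_{V_\ell}(F)$-equivariance of $(c_\ell,d_\ell)$ is immediate, because the Mellin transform commutes with $\mathrm{O}_{V_\ell}$. Surjectivity comes from choosing $f$ with prescribed residues, using that the map $f\mapsto d_\ell(f)$ hits the $C_c^\infty(X_{\ell-1}^\circ)$ part and then, by Fourier equivariance, the rest.

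In the archimedean cases the contour can be shifted arbitrarily far. The further poles at $s=-(\ell-1)-2k$ and $s=-1-2k$ (or $-k$ for $\CC$) produce the power series factor $\CC[[X_\ell]]$ of Taylor-type corrections. When the poles at $-1-2k$ and $-(\ell-1)$ collide, which happens exactly when $\ell$ is even for $F=\RR$ and always for $F=\CC$, double poles appear and give the $\log|x|$ term. The inclusion $\mathcal{S}_{\mathrm{ES}}(X_\ell(F))\subseteq\mathcal{S}(X_\ell(F))$ follows because restrictions of Schwartz functions on $F^{2\ell}$ have Taylor expansions at $0$ lying in the described space, modulo flat functions, which lie in $\mathcal{S}(X_\ell^\circ(F))$.

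The main obstacle is the archimedean case: showing that the remainder after finitely many residues is genuinely Schwartz on $X_\ell^\circ$, with uniform derivative bounds, and that every formal series is realized. For the first part I would use the Fréchet topology and bounds on $\gamma(s)M(s)$ in vertical strips. For the second I would use Borel's lemma together with the Weyl-algebra description from \cite{Hsu:asymptotics}.
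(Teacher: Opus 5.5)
The paper gives no argument of its own for this statement, deferring to \cite{GK:cone} and \cite[Examples 4.4.2, 5.4.2 and \S 5.5]{Hsu:asymptotics}, and your outline is in line with the Mellin-transform analysis of \cite{Hsu:asymptotics}: you read the germs at $0$ off the poles of Mellin transforms via the functional equation relating $\mathcal{F}_{X_\ell}(f)_s$ to $M(s)f_{-s}$, with the power-series and $\log|x|$ terms coming from the further and colliding archimedean poles. The one step you call immediate that actually carries content is identifying the $s=-1$ residues, via your fibre Fourier transform, with $(\sigma_{\ell-1},\mathcal{S}(X_{\ell-1}(F)))$ as $\mathrm{O}_{V_\ell}(F)$-modules: equivariance for the geometric action on homogeneous germs is automatic, but you still need injectivity on residues and that the Weyl element of $\mathrm{O}_{V_\ell}$ is carried to $\mathcal{F}_{X_{\ell-1}}$ (your surjectivity step ``by Fourier equivariance'' presupposes this), and over $\RR$ and $\CC$ the Mellin transforms must run over all characters of $F^\times$, not just $|\cdot|^s$.
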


\begin{proof}
    When $F$ is nonarchimedean, this is proved in \cite{GK:cone} and \cite[Example 4.4.2]{Hsu:asymptotics}. For archimedean $F,$ this is computed in \cite[Example 5.4.2 and \S 5.5]{Hsu:asymptotics}.
\end{proof}

These maps are obtained by taking germs of functions at the origin (the singularity).  The notation $-\log |x|$ above is symbolic. It is used to keep track of the asymptotics. We remark that when $F=\RR$ and $2\mid \ell$ or $F=\CC$, the map $c_\ell$ takes image in $\CC(-\log |x|)$ instead of $\CC$. The maps $c_\ell$ are canonical and hence independent of the choice of $\psi$. The maps $d_\ell$ are only determined up to a scalar since we have chosen a model.

We now choose a normalization of $d_\ell$ that is independent of $\psi$. Let $\ell\ge 2$. When $F$ is nonarchimedean,  for $x\in X_\ell^\circ(F)$ let
\begin{align}\label{eq:basicnonarch}
    b_\ell(x):=\sum_{j=0}^\infty q^{j(\ell-2)}\one_{V_\ell(\OO)}\left(\frac{x}{\varpi^j} \right).
\end{align}
By Theorem \cite[Theorem 6.1]{SW:minimal}, if $\psi$ is unramified, $b_\ell$ is the spherical vector of $\sigma_\ell$ with $b_\ell(x)=1$ for $x\in X_\ell^\circ(\mathcal{O}).$
When $F$ is archimedean, consider the normalized  $K$-Bessel function
\begin{align*}
    \widetilde{K}_{\nu}(z):=\left(\frac{z}{2}\right)^{-\nu}K_{\nu}(z)
\end{align*}
defined in \cite[\S 7.2]{Kobayashi:Mano} for $\nu\ge 0$. By \cite[(2.14) and Theorem 2.19]{minrep:real}, there is a constant $c\in \RR_{>0}$ (depending on $F$ and $\psi$) such that $\widetilde{K}_{[F:\RR]\frac{\ell-2}{2}}(c|x|_\RR)$ is a spherical vector of $\sigma_\ell$. For $x\in X_\ell^\circ(F),$ let
\begin{align}\label{eq:basicarch}
    b_\ell(x):=2^{[F:\RR]}([F:\RR]\pi)^{[F:\RR]\frac{\ell-2}{2}}\widetilde{K}_{[F:\RR]\frac{\ell-2}{2}}\bigg(2\pi[F:\RR] |x|_\RR\bigg).
\end{align}
For $\ell\ge 3,$ we normalize $d_\ell$ so that \begin{align}\label{eq:dnormalize}
    d_{\ell}(b_\ell)=b_{\ell-1}.
\end{align}

The space $\mathcal{S}(X_\ell^\circ(F))$ is stable under $\sigma_\ell(P_{\ell+1}(F))$.
\begin{lem}\label{lem:Iact} 
Suppose $F$ is nonarchimedean. For $\ell\ge 2,$ the $P_{\ell+1}(F)$-module $(\sigma_\ell,\mathcal{S}(X_\ell^\circ(F)))$ is irreducible.
\end{lem}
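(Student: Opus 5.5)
We argue via a Mackey-type analysis of the action of $P_{\ell+1}(F)=M_{\ell+1}(F)N_{\ell+1}(F)$ on $\mathcal{S}(X_\ell^\circ(F))=C^\infty_c(X_\ell^\circ(F))$, using the explicit formulas \eqref{eq:minimal:act}.

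\emph{Step 1: reduction to an orbit.} By \eqref{eq:minimal:act}, $N_{\ell+1}(F)\cong V_\ell(F)$ acts by multiplication by the characters $x\mapsto \overline{\psi}(\langle v,x\rangle)$. The characters of $V_\ell(F)$ are all of the form $\psi(\langle v,\cdot\rangle)$, and as $v$ ranges over $V_\ell(F)$ their finite linear combinations give, after restriction to $X_\ell^\circ(F)$, all locally constant functions on each compact open subset. Hence any nonzero $P_{\ell+1}(F)$-stable subspace $W\subseteq C^\infty_c(X_\ell^\circ(F))$ is a module over $C^\infty_c(X_\ell^\circ(F))$ under pointwise multiplication. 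Explicitly, for $f\in W$ and a compact open $U$, the product $\one_U f$ is a finite combination of the functions $\sigma_\ell(u(v))f$, since $f$ has compact support and $\one_U$ restricted to $\mathrm{supp} f$ is a finite combination of characters. The nonarchimedean hypothesis enters exactly here.

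\emph{Step 2: $W$ contains all functions supported near one point.} Pick $f\in W$ and $x_0$ with $f(x_0)\ne0$. Let $U\ni x_0$ be a small compact open on which $f$ is constant and nonzero. Then $\one_U\in W$, and multiplying by further characters gives $\one_{U'}\in W$ for every compact open $U'\subseteq U$. So $W\supseteq C^\infty_c(U)$.

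\emph{Step 3: transitivity.} The Levi $M_{\ell+1}(F)\supseteq \mathrm{O}_{V_\ell}(F)$ acts by $f\mapsto f(\cdot h)$. For $\ell\ge2$, $\mathrm{O}_{V_\ell}(F)$ (already $\mathrm{SO}_{V_\ell}(F)$) acts transitively on $X_\ell^\circ(F)$ by Witt's theorem, since $X_\ell^\circ(F)$ consists of the nonzero isotropic vectors. Translating $C^\infty_c(U)$ by $h\in\mathrm{O}_{V_\ell}(F)$ gives $C^\infty_c(Uh)\subseteq W$. Any $\phi\in C^\infty_c(X_\ell^\circ(F))$ has compact support covered by finitely many translates $Uh_i$, and a partition of unity by compact opens writes $\phi$ as a finite sum of elements of the $C^\infty_c(Uh_i)$. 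Hence $W=C^\infty_c(X_\ell^\circ(F))$, which proves irreducibility in the algebraic sense.

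\emph{Main obstacle.} The only delicate point is Step 1, namely that characters $\psi(\langle v,\cdot\rangle)$ separate the points of $X_\ell^\circ(F)$ and generate the locally constant functions on compact sets. This holds because $\langle\cdot,\cdot\rangle$ is nondegenerate on $V_\ell(F)$, so these characters exhaust the Pontryagin dual of $V_\ell(F)$. Restricting functions on $V_\ell(F)$ to the closed subset $X_\ell^\circ(F)$ preserves the relevant density: a locally constant compactly supported function on $V_\ell(F)$ is a finite sum of characters times $\one_L$ for a lattice $L$, and $\one_L$ is itself a finite average of characters restricted to a bounded set.
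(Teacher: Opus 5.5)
Your proposal is correct and takes essentially the same approach as the paper. Both localize using the $N_{\ell+1}(F)$-characters $\overline{\psi}(\langle v,\cdot\rangle)$ together with Fourier inversion, then spread out using transitivity of the orthogonal group. The only cosmetic difference is that the paper first reduces to the unit sphere $X_\ell(F)^1$, on which $\mathrm{O}_{V_\ell}(\OO)$ acts transitively, whereas you use transitivity of $\mathrm{O}_{V_\ell}(F)$ on all of $X_\ell^\circ(F)$ directly, plus a compactness and partition argument.
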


\begin{proof}
    Since $\mathrm{O}_{V_\ell}(F)$ acts transitively on $X_\ell^\circ(F)$ and $C^\infty(X_\ell(F)^1)\subset \mathcal{S}(X_\ell^\circ(F))$ is stable under $N_{\ell+1}(F)$, it suffices to show $C^\infty(X_\ell(F)^1)$ is an irreducible $N_{\ell+1}(F)\rtimes \mathrm{O}_{V_\ell}(\OO)$-module. Let $0  \neq f \in C^\infty(X_{\ell}(F)^1)$ and let $W$ be the vector space spanned by the $N_{\ell+1}(F)\rtimes \mathrm{O}_{V_\ell}(\OO)$-translates of $f.$ Let $x \in X_{\ell}(F)^1.$  
    The group $\mathrm{O}_{V_\ell}(\OO)$ acts transitively on $X_{\ell}(F)^1.$
    Thus using the action of $\mathrm{O}_{V_\ell}(\OO)$ there is an element $f' \in W$ such that $f'(x) \neq 0.$     By Fourier inversion, the functions
    \begin{align*}
        \xi\mapsto \psi(v\xi^t),\quad \xi\in X_\ell(F)^1 
    \end{align*}
    ranging over all $v\in V_{\ell}(F)$ generate $C^\infty(V_\ell(\OO))|_{X_\ell(F)^1}$.
    Thus $\one_{x+\varpi^n V_\ell(\OO)}|_{X_\ell^1(F)}\cdot f' \in W$ for all $n.$  We deduce that $W$ contains $\one_{x+\varpi^nV_{\ell}(\OO)}$ for all $n$ sufficiently large and all $x.$  Thus $W=C^\infty(X_\ell(F)^1).$ 
\end{proof}

\section{Fourier theory on $X_\ell$ \`a la theta lift}\label{sec:thetalift}

 For $\ell\ge 1,$ let 
\begin{align*}  \rho_\ell:=\rho_{\ell,\psi}:\SL_2(F)\times\mathcal{S}
(V_\ell(F))\to \mathcal{S}
(V_\ell(F))\end{align*}
be the Weil representation. Explicitly, it is determined by
\begin{align*}
    \rho_\ell\begin{psmatrix}
        0 & 1\\ 
        -1 & 0
    \end{psmatrix}f(v)&=\int_{V_\ell(F)} f(u)\psi(\langle u,v\rangle)du,\\
    \rho_\ell\begin{psmatrix}
        1 & t \\
        0 & 1
    \end{psmatrix}f(v)&=\psi(tQ_\ell(v))f(v),\\
    \rho_\ell \begin{psmatrix}
        a & 0\\ 
        0 & a^{-1}
    \end{psmatrix}f(v)&=|a|^\ell f(av).
\end{align*}
Here the measure $du$ on $V_\ell(F)$ is the self-dual measure with
respect to $\langle \cdot,\cdot\rangle$ and $\psi$. The representation commutes with the natural (right) action of $\mathrm{O}_{V_\ell}$ on $V_\ell$.

Let $R$ be the natural right action of $\SL_2$ on $\A^2$. Then we have a smooth representation
\begin{align*}
    r_\ell:=\rho_\ell\otimes R:\SL_2(F)\times\mathcal{S}
(V_\ell(F)\oplus F^2)\to \mathcal{S}
(V_\ell(F)\oplus F^2).
\end{align*}
Define the space of coinvariants
\begin{align*}
    \widetilde{\mathcal{S}}(X_\ell(F)):=\mathcal{S}(V_\ell(F)\oplus F^2)_{r_\ell(\SL_2(F))}.
\end{align*} 
When $F$ is archimedean,  coinvariants are taken topologically so that $\widetilde{\mathcal{S}}(X_\ell(F))$ is a (nuclear) Fr\'echet space under the quotient topology.

For $f\in \mathcal{S}
(V_\ell(F)\oplus F^2)$ and $v\in X_\ell^\circ(F)$, define the integral
\begin{align*}
    I(f)(v):=\int_{N(F)\backslash \SL_2(F)} r_\ell(g)f(v,0,1)\, d\dot{g}.
\end{align*}
It is absolutely convergent by \cite[Lemma 3.1]{Getz:Quad} and defines a linear operator
\begin{align*}
    I:\widetilde{\mathcal{S}}(X_{\ell}(F))\lto C^\infty(X_\ell^\circ(F))
\end{align*}
that is continuous when $F$ is archimedean. The same lemma implies for $\ell \geq 2$ one has $I(\widetilde{\mathcal{S}}(X_{\ell}(F))) \subset  L^2(X_{{\ell}}(F)).$

On $\mathcal{S}(F^2)$ we have the partial Fourier transform in the second variable
\begin{align*}
    \mathcal{F}_2:\mathcal{S}(F^2)&\lto\mathcal{S}(F^2)\\
    f&\longmapsto \bigg((u_1,u_2)\mapsto \int_{F} f(u_1,x)\psi(u_2 x)\, dx\bigg).
\end{align*}
We will constantly identify $V_{\ell+1}$ with $V_\ell\times \A^2$. This induces a Fourier transform
\begin{align*}
    \mathcal{F}_2:=\mathrm{id}_{\mathcal{S}(V_\ell(F))}\otimes \mathcal{F}_2:\mathcal{S}(V_{\ell+1}(F))\lto \mathcal{S}(V_{\ell}(F)\oplus F^2).
\end{align*}
By \cite[Lemma 3.2]{Getz:Quad} it satisfies the property  
$\mathcal{F}_2\circ \rho_{\ell+1}(g)=r_\ell(g)\circ \mathcal{F}_2$ for all $g\in \SL_2(F)$. In particular, it allows us to view $\widetilde{\mathcal{S}}(X_{\ell}(F))$ as a smooth representation $\tilde{\sigma}_\ell$ of $\mathrm{O}_{V_{\ell+1}}(F),$ which is the big theta lift of the trivial representation of $\SL_2(F)$.

Consider the $\SL_2(F)$-equivariant Fourier transform:
\begin{align*}
    \mathcal{F}_{\wedge}: \mathcal{S}(F^2)&\lto \mathcal{S}(F^2)\\
    f&\mapsto \bigg( (u_1,u_2)\mapsto \int_{F^2}f(w_1,w_2)\psi(w_1u_2-w_2u_1) \,dw_1dw_2\bigg).
\end{align*}
It induces an automorphism
\begin{align*}
    \widetilde{\mathcal{F}}_{X_\ell}:=\mathrm{id}_{\mathcal{S}(V_\ell(F))}\otimes \mathcal{F}_{\wedge}:\widetilde{\mathcal{S}}(X_{\ell}(F))\lto \widetilde{\mathcal{S}}(X_\ell(F))
\end{align*}
which is continuous when $F$ is archimedean.

The action of $\tilde{\sigma}_\ell$ is computed in \cite[Proposition 3.3]{Getz:Quad}.

\begin{prop}\label{prop:action}
    Let $f\in \widetilde{\mathcal{S}}(X_\ell(F))$ and $x\in X_\ell^\circ(F)$. For $h\in \mathrm{O}_{V_\ell}(F),a\in F^\times$ and $v\in V_\ell(F),$ one has
    \begin{align*}
        I(\tilde{\sigma}_\ell(h)f)(x)&=I(f)(xh),\\
        I\left(\tilde{\sigma}_\ell\begin{psmatrix}
            I_{2\ell} & &\\
            & a &\\
            & & a^{-1}
        \end{psmatrix}f\right)(x)&=|a|^{\ell-1} I(f)(ax),\\
        I(\tilde{\sigma}_\ell(u(v))f)(x)&=\overline{\psi}(\langle v,x\rangle )I(f)(x),\\
        \tilde{\sigma}_\ell \begin{psmatrix}
            I_{2\ell} & &\\
             & & 1\\
             &1 &
        \end{psmatrix}f&=\widetilde{\mathcal{F}}_{X_\ell}(f).
    \end{align*}\qed
\end{prop}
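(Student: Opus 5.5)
The plan is to make the $\mathrm{O}_{V_{\ell+1}}(F)$-action on $\widetilde{\mathcal{S}}(X_\ell(F))$ explicit by transport of structure, and then to check the four formulas one generator at a time. On $\mathcal{S}(V_{\ell+1}(F))$ the group $\mathrm{O}_{V_{\ell+1}}(F)$ acts geometrically by $(g\cdot \Phi)(w)=\Phi(wg)$, and this action commutes with $\rho_{\ell+1}$. By \cite[Lemma 3.2]{Getz:Quad} the map $\mathcal{F}_2$ intertwines $\rho_{\ell+1}$ with $r_\ell$. Hence $\tilde\sigma_\ell(g):=\mathcal{F}_2\circ g\circ \mathcal{F}_2^{-1}$ descends to the $r_\ell(\SL_2(F))$-coinvariants, and this is the action in question.

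For each generator $g$ I compute $\mathcal{F}_2\circ g\circ\mathcal{F}_2^{-1}$ as an explicit operator on $\mathcal{S}(V_\ell(F)\oplus F^2)$. I then substitute it into
$$I(f)(x)=\int_{N(F)\backslash\SL_2(F)} r_\ell(g)f(x,0,1)\,d\dot g,$$
using the Iwasawa decomposition $g=\begin{psmatrix}t&\\&t^{-1}\end{psmatrix}k$ over $N\backslash \SL_2$.

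\textbf{Orthogonal factor.} For $h\in\mathrm{O}_{V_\ell}(F)$ the action touches only the $V_\ell$-variable, so it commutes with $\mathcal{F}_2$ and with $\rho_\ell$. This gives $I(\tilde\sigma_\ell(h)f)(x)=I(f)(xh)$ directly.

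\textbf{Torus.} The element $\mathrm{diag}(I_{2\ell},a,a^{-1})$ sends $(w,y_1,y_2)\mapsto(w,ay_1,a^{-1}y_2)$. After the partial Fourier transform in the second variable this becomes
$$f\longmapsto |a|\,f(w,au_1,au_2).$$
In the integral, $(x,0,1)$ is replaced by $(x,0,a)$. The torus element $\begin{psmatrix}a^{-1}&\\&a\end{psmatrix}$ normalizes $N$, so I can absorb it into $g$ by a change of variables on $N\backslash\SL_2$. This produces three factors:
\begin{itemize}
\item the Weil factor $|a|^{\ell}$ together with the rescaling $x\mapsto ax$, from $\rho_\ell$;
\item the modulus factor $|a|^{-2}$, from the measure $dn\,|t|^{-2}d^\times t$;
\item the $|a|$ coming from $\mathcal{F}_2$.
\end{itemize}
Together they give $|a|^{\ell-1}I(f)(ax)$. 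The only care needed is the direction of the substitution $t\mapsto a^{\pm1}t$.

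\textbf{Unipotent radical.} The element $u(v)$ sends $(w,y_1,y_2)\mapsto(w-y_1v,\;y_1,\;\langle w,v\rangle\text{-term}+Q_\ell(v)y_1+y_2)$, with signs fixed by the matrix defining $u$. After $\mathcal{F}_2$, the shift in $y_2$ turns into a character. The result is that $\mathcal{F}_2\circ u(v)\circ\mathcal{F}_2^{-1}$ acts on a function at $(w,u_1,u_2)$ by a translation of $w$ by a multiple of $u_1 v$, times a character $\overline\psi(u_2\langle v,w\rangle+\cdots)$.

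The key observation is that this operator commutes with $r_\ell(g)$. So I may apply it after $r_\ell(g)$ and then evaluate at the point $(x,0,1)$. At $u_1=0$ the translation is trivial and the character is exactly $\overline\psi(\langle v,x\rangle)$, which pulls out of the integral.

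\textbf{Weyl element.} The element swapping the last two coordinates acts as $\Phi(w,y_1,y_2)\mapsto\Phi(w,y_2,y_1)$. Conjugating by $\mathcal{F}_2$ (a Fourier transform in the second variable only) gives a full Fourier transform in $(u_1,u_2)$. A direct computation with self-dual measures shows this is the symplectic Fourier transform $\mathcal{F}_\wedge$, possibly composed with $(u_1,u_2)\mapsto(u_1,-u_2)$. Any leftover sign is harmless: $-I_2\in\SL_2(F)$ acts through $R$ by $(u_1,u_2)\mapsto(-u_1,-u_2)$ and through $\rho_\ell$ by an $\mathrm{O}_{V_\ell}$-action, so the discrepancy vanishes in coinvariants (if necessary after combining with $\begin{psmatrix}0&1\\-1&0\end{psmatrix}$).

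I expect the main obstacle to be the unipotent step. It requires the exact form of $u(v)$, a clean formula for the conjugated operator, and a check that it commutes with $r_\ell(\SL_2(F))$, so that evaluating at $(x,0,1)$ is legitimate. Second to that is the bookkeeping of signs and the normalizations of $\psi$ and the self-dual measures in the Weyl-element step.
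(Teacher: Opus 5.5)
Your transport-of-structure computation is correct. The paper gives no argument of its own here: it simply cites \cite[Proposition 3.3]{Getz:Quad} for the computation of $\tilde\sigma_\ell$. Your generator-by-generator verification supplies that computation, and your orthogonal, torus (factors $|a|^{\ell}\cdot|a|^{-2}\cdot|a|$) and unipotent steps are right. Two points need repair.

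\emph{Weyl element.} There is no leftover sign, and your fallback for absorbing one is false. With $\mathcal{F}_2^{-1}g(w,y_1,y_2)=\int_F g(w,y_1,u)\overline{\psi}(uy_2)\,du$ one gets
\[
\mathcal{F}_2\circ w_0\circ\mathcal{F}_2^{-1}f(w,u_1,u_2)=\int_{F^2}f(w,x,y)\,\psi(xu_2-yu_1)\,dx\,dy=(\mathrm{id}\otimes\mathcal{F}_\wedge)f(w,u_1,u_2).
\]
This holds exactly, already on $\mathcal{S}(V_\ell(F)\oplus F^2)$. A discrepancy such as $T:(u_1,u_2)\mapsto(u_1,-u_2)$ has determinant $-1$. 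It satisfies $T\circ R(g)=R(\epsilon g\epsilon)\circ T$ with $\epsilon=\mathrm{diag}(1,-1)$, while it leaves $\rho_\ell$ untouched, so it does not commute with $r_\ell$. It therefore cannot be absorbed by $r_\ell(-I_2)$, which acts by $(w,u)\mapsto(-w,-u)$. Nor can it be absorbed by $r_\ell\begin{psmatrix}0&1\\-1&0\end{psmatrix}$, which acts by a Fourier transform on $V_\ell$ and a rotation on $F^2$. In fact such a discrepancy is impossible a priori, since both $\mathcal{F}_2\circ w_0\circ\mathcal{F}_2^{-1}$ and $\mathrm{id}\otimes\mathcal{F}_\wedge$ commute with $r_\ell$. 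Replace the hedge by the exact identity above.

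\emph{Unipotent radical.} The commutation you flag as the main obstacle is automatic. Every element of $\mathrm{O}_{V_{\ell+1}}(F)$ commutes with $\rho_{\ell+1}$, so its $\mathcal{F}_2$-conjugate commutes with $r_\ell$; this is exactly why $\tilde\sigma_\ell$ descends to coinvariants.

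What you must watch is that the displayed matrix $u(v)$ preserves $Q_{\ell+1}$ only if its $(2\ell+1,2\ell+2)$ entry is $-Q_\ell(v)$. Indeed, $Q_{\ell+1}\big((w,y_1,y_2)u(v)\big)=Q_\ell(w)+y_1y_2+(Q_\ell(v)+c)y_1^2$ for entry $c$. With $+Q_\ell(v)$, as printed, the conjugated operator fails to commute with $r_\ell\begin{psmatrix}1&t\\&1\end{psmatrix}$ by the factor $\psi(2tu_1^2Q_\ell(v))$. With the corrected sign,
\[
\tilde\sigma_\ell(u(v))f(w,u_1,u_2)=\overline{\psi}\big(u_2\langle w,v\rangle-u_1u_2Q_\ell(v)\big)\,f(w-u_1v,u_1,u_2).
\]
Evaluating at $(x,0,1)$ after $r_\ell(g)$ gives $\overline{\psi}(\langle v,x\rangle)I(f)(x)$, as you say. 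The sign of that entry does not affect the final formula, because $u_1=0$ at the evaluation point.
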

\noindent In particular, we have 
\begin{align}\label{eq:Pequal}
    I(\tilde{\sigma}_\ell(g)(f))=\sigma_\ell(g)I(f),\quad \forall g\in P_{\ell+1}(F).
\end{align}

When $F$ is nonarchimedean and $\psi$ is unramified, for $\ell \geq 2$ we have $b_{\ell}=I(\one_{V_{\ell+1}(\OO)})$ by \cite[Lemma 3.8]{Getz:Quad}. For $\ell=1,$ we define for $x\in X_1^\circ(F)$
\begin{align}\label{eq:basicagree}
    b_1(x):=I(\one_{V_{2}(\OO)})(x)=\sum_{j=0}^\infty q^{-j}\one_{V_\ell(\OO)}\left(\frac{x}{\varpi^j} \right).
\end{align}
Suppose $F$ is archimedean.  Consider the function 
\begin{align*}
    \tilde{b}_{\ell}(v):=e^{-[F:\RR]\pi |v|_\RR^2},\quad v\in V_{\ell+1}(F).
\end{align*}
 When $\psi(t)=e^{2\pi i\mathrm{tr}_{F/\RR}(t)},$ $\tilde{b}_\ell$ is invariant under the action of $r_\ell(K)$. If $F=\RR,$ by the Iwasawa decomposition for $x\in X_\ell^\circ(F)$
\begin{align*}
    I(\tilde{b}_\ell)(x)&=\int_{F^\times} |t|^{\ell-2}\tilde{b}_\ell(tx,0,t^{-1})d^\times t\\
    &=2\int_{0}^\infty t^{\ell-2}e^{-\pi t^2|x|_\RR^2-\pi t^{-2}}\frac{dt}{t}\\
    &=\int_{0}^\infty t^{\frac{\ell-2}{2}}e^{-\pi t|x|_\RR^2-\pi t^{-1}}\frac{dt}{t}\\
    &=\int_{0}^\infty t^{-\frac{\ell-2}{2}}e^{-\pi t^{-1}|x|_\RR^2-\pi t}\frac{dt}{t}.
\end{align*}
By \cite[\S 7.12 (23)]{Highertrans:II} it equals
\begin{align*}
    2|x|_\RR^{\frac{2-\ell}{2}}K_{\frac{\ell-2}{2}}( 2\pi |x|_\RR)=2\pi^{\frac{\ell-2}{2}}\widetilde{K}_{\frac{\ell-2}{2}}(2\pi |x|_\RR).
\end{align*}
Similarly when $F=\CC$ we have
\begin{align*}
  I(\tilde{b}_\ell)(x)=4(2\pi)^{\ell-2}\widetilde{K}_{\ell-2}(4\pi |x|_\RR).
\end{align*}
Therefore, $b_\ell=I(\tilde{b}_\ell)$ for $\ell\ge 2,$ and we let $b_1:=I(\tilde{b}_1).$ Note that for any $F$ and $\ell,$ $\widetilde{\mathcal{F}}_{X_\ell}(\tilde{b}_\ell)=\tilde{b}_\ell.$

Define for $f\in \mathcal{S}(V_\ell(F)\oplus F^2)$
\begin{align*}
    Z_{\ell}(f,s):=\int_{N(F)\backslash \SL_2(F)} e^{H(g)(2-\ell-s)}r_\ell(g)f(0_{\underline{2\ell}},0,1)d\dot{g},
\end{align*} 
where 
\begin{align*}
    H(g):=\log|t|,\quad g=\begin{psmatrix}
        1 &  n\\
         & 1
    \end{psmatrix}\begin{psmatrix}
        t & \\
         & t^{-1}
    \end{psmatrix}k\in N(F)T(F)K.
\end{align*}
For $t\in F^\times,$ let 
\begin{align*}
    \Psi_f(t):=\int_{K} r_\ell(k)f(0_{\underline{2\ell}},0,t)dk.
\end{align*}
Then 
\begin{align*}
     Z_{\ell}(f,s)=Z(\Psi_f,s)=\int_{F^\times}\Psi_f(t)|t|^s d^\times t
\end{align*}
is a Tate integral considered in \S \ref{ssec:Tate}, so it extends to a meromorphic function on $\CC$ such that $\zeta(s)^{-1}Z_\ell(f,s)$ is entire. Define a linear functional
\begin{align*}
    \tilde{c}_\ell:\mathcal{S}(V_{\ell}(F)\oplus F^2)&\lto \CC\\
    f&\mapsto \begin{cases}
        Z_{\ell}(f,2-\ell) & \textrm{if } \ell\neq 1 \textrm{ and } \zeta(s) \textrm{ is holomorphic at } s=2-\ell,\\
        \mathrm{Res}_{s=2-\ell}Z_{\ell}(f,s) & \begin{aligned}
&\textrm{if } \zeta(s) \textrm{ has a pole at } s=2-\ell,  \\
&\textrm{and } F \textrm{ is archimedean},
\end{aligned}\\
\zeta(s)^{-1}Z_2(f,s)\bigg|_{s=0} & \textrm{if } \ell=2 \textrm{ and } F \textrm{ is nonarchimedean},\\
        \zeta(1)^{-1}Z_{1}(f,1) &\textrm{if } \ell=1.
    \end{cases}
\end{align*}

\begin{lem}
    The functional $\tilde{c}_\ell$ is $r_\ell(\SL_2(F))$-invariant. 
\end{lem}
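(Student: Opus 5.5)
The plan is to rewrite $Z_\ell(f,s)$ as the standard $\SL_2(F)$-invariant pairing between two induced representations of $\SL_2(F)$. That pairing turns out to be invariant exactly when $s=2-\ell$, which is the point where $\tilde c_\ell$ is defined.

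\emph{Reduction to pure tensors.} By linearity it suffices to treat $f=f_1\otimes f_2$ with $f_1\in\mathcal{S}(V_\ell(F))$ and $f_2\in\mathcal{S}(F^2)$. In the archimedean case this needs two extra facts, which I would check separately:
\begin{itemize}
\item $\tilde c_\ell$ is continuous on $\mathcal{S}(V_\ell(F)\oplus F^2)$;
\item the algebraic tensor product is dense.
\end{itemize}
Both follow from the continuity of $f\mapsto \Psi_f$ into $\mathcal{S}(F)$ and the continuity of Tate's $\zeta(s)^{-1}Z(\cdot,s)$ (or of its residue) on $\mathcal{S}(F)$.

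\emph{Factorization of the integrand.} For such $f$,
\begin{align*}
r_\ell(g)f(0_{\underline{2\ell}},0,1)=\phi_{f_1}(g)\, f_2((0,1)g),\qquad \phi_{f_1}(g):=\rho_\ell(g)f_1(0).
\end{align*}
The explicit formulas for $\rho_\ell$ give
\begin{align*}
\phi_{f_1}\left(\begin{psmatrix} a & n\\ & a^{-1}\end{psmatrix}g\right)=|a|^{\ell}\phi_{f_1}(g),
\end{align*}
so $\phi_{f_1}$ lies in $\mathrm{Ind}_B^{\SL_2}(|\cdot|^{\ell})$ (unnormalized induction), and $f_1\mapsto\phi_{f_1}$ is $\SL_2(F)$-equivariant.

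\emph{The second section.} For the $F^2$-factor, I would introduce the Godement--Tate section
\begin{align*}
\Phi_{f_2,s}(g):=\int_{F^\times} f_2(t(0,1)g)\,|t|^{s}\,d^\times t .
\end{align*}
It converges for $\mathrm{Re}(s)>0$ and satisfies $\Phi_{f_2,s}(bg)=|a|^{-s}\Phi_{f_2,s}(g)$ for $b\in B(F)$ with diagonal entry $a$. Since $R$ is the right regular action, $f_2\mapsto\Phi_{f_2,s}$ is $\SL_2(F)$-equivariant. Moreover $\Phi_{f_2,s}(k)=Z(R(k)f_2(0,\cdot),s)$ is a Tate integral, so $\zeta(s)^{-1}\Phi_{f_2,s}$ is a holomorphic family of sections.

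\emph{Iwasawa computation.} Using the normalization $dn\,\frac{d^\times t}{|t|^2}\,dk$ and substituting $t\mapsto t^{-1}$, I would compute
\begin{align*}
Z_\ell(f,s)=\int_K\int_{F^\times}|t|^{2-\ell-s}\,|t|^{\ell}\,\phi_{f_1}(k)\,f_2((0,t^{-1})k)\,|t|^{-2}\,d^\times t\,dk=\int_K \phi_{f_1}(k)\,\Phi_{f_2,s}(k)\,dk .
\end{align*}

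\emph{Invariance at $s=2-\ell$.} The functional $\int_K \phi\,\Phi\,dk$ is the pairing $\int_{B(F)\backslash\SL_2(F)}$. It is $\SL_2(F)$-invariant precisely when the product of the two inducing characters is the modulus character $|a|^{2}$, i.e. when $|a|^{\ell}\cdot|a|^{-s}=|a|^{2}$, i.e. $s=2-\ell$.
\begin{itemize}
\item \emph{$\zeta$ holomorphic at $2-\ell$.} Here $\Phi_{f_2,s}$ is holomorphic at $s=2-\ell$ and is still a section of the right induced space. Then $Z_\ell(r_\ell(h)f,2-\ell)=Z_\ell(f,2-\ell)$ directly.
\item \emph{Pole at $2-\ell$, or the normalized cases.} The maps
\begin{align*}
f_2\mapsto\mathrm{Res}_{s=2-\ell}\Phi_{f_2,s},\qquad f_2\mapsto \zeta(s)^{-1}\Phi_{f_2,s}\big|_{s=2-\ell}
\end{align*}
are again $\SL_2(F)$-equivariant maps into $\mathrm{Ind}_B^{\SL_2}(|\cdot|^{\ell-2})$. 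This is because $\Phi_{f_2,s}(bg)=|a|^{-s}\Phi_{f_2,s}(g)$ holds identically in $s$, and taking residues or leading coefficients preserves it at the special point. Pairing these with $\phi_{f_1}$ yields the corresponding case of $\tilde c_\ell$, which is therefore invariant.
\end{itemize}
The same argument covers $\ell=1$, where the relevant point is $s=1=2-\ell$.

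\emph{Main obstacle.} The delicate step is bookkeeping rather than conceptual. One must check that each case in the definition of $\tilde c_\ell$ (value, residue, or $\zeta(s)^{-1}$-normalized value) matches exactly the specialization of $\int_K\phi_{f_1}\Phi_{f_2,s}\,dk$ at $s=2-\ell$. One must also check the measure normalizations, so that the exponent of $|t|$ really produces the modulus character at that point.
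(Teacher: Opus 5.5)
Your route is sound, but one formula is wrong and has to be corrected before the argument closes.

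\textbf{The sign slip.} For $b=\begin{psmatrix} a & n\\ & a^{-1}\end{psmatrix}$ one has $(0,1)b=(0,a^{-1})$. The substitution $t\mapsto ta$ then gives $\Phi_{f_2,s}(bg)=|a|^{s}\Phi_{f_2,s}(g)$, not $|a|^{-s}\Phi_{f_2,s}(g)$.

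With the law you wrote, the condition $|a|^{\ell}|a|^{-s}=|a|^{2}$ would give $s=\ell-2$. Your specialized sections would then lie in $\mathrm{Ind}_B^{\SL_2}(|\cdot|^{\ell-2})$. Their product with $\phi_{f_1}$ transforms by $|a|^{2\ell-2}$, which is the modulus character only when $\ell=2$. So as written the invariance step is inconsistent, and your ``i.e.\ $s=2-\ell$'' lands on the right point only because of a second slip.

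With the correct law, $\phi_{f_1}\Phi_{f_2,2-\ell}$ transforms by $|a|^{\ell}|a|^{2-\ell}=|a|^2$. The specialized sections lie in $\mathrm{Ind}_B^{\SL_2}(|\cdot|^{2-\ell})$ (in your unnormalized convention). Your Iwasawa computation, which is correct, then finishes the proof.

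In the residue case, state explicitly that the pole of $\Phi_{f_2,s}$ at $2-\ell$ is at most simple. This holds because the local $\zeta$ has only simple poles. A higher-order pole would produce $\log|a|$ terms, and the residue would no longer be equivariant.

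\textbf{Comparison with the paper.} The paper neither factors $f$ nor passes to induced representations. It substitutes $g\mapsto gh^{-1}$ in the integral over $N(F)\backslash\SL_2(F)$ and obtains
\[
Z_\ell(r_\ell(h)f,s)=Z_\ell(f,s)+\int_K\int_{F^\times}|t|^s\bigl(e^{H(kh^{-1})(2-\ell-s)}-1\bigr)\,r_\ell(k)f(0_{\underline{2\ell}},0,t)\,d^\times t\,dk .
\]
The bracket vanishes at $s=2-\ell$ and cancels the at most simple pole of the Tate integral. Hence the correction term is holomorphic there, so the residue is unchanged, and it vanishes there when $\zeta$ is holomorphic. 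This handles every case of the definition at once, for arbitrary $f$, with no density or continuity argument.

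Your version explains conceptually why $2-\ell$ is the special point: the product of the inducing characters equals the modulus character. It costs you the pure-tensor reduction, with its density and continuity argument in the archimedean case.

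You can drop that reduction. The function $g\mapsto\int_{F^\times}(r_\ell(g)f)(0_{\underline{2\ell}},0,t)|t|^s\,d^\times t$ already lies in $\mathrm{Ind}_B^{\SL_2}(|\cdot|^{\ell+s})$, depends equivariantly on $f$, and integrates over $K$ to $Z_\ell(f,s)$. Phrased this way, your argument and the paper's are essentially the same computation.
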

\begin{proof}
   Let $f\in \mathcal{S}(V_\ell(F)\oplus F^2)$ and $h\in \SL_2(F).$ We have
    \begin{align*}
        Z_\ell(r_\ell(h)f,s)&=\int_{N(F)\backslash \SL_2(F)} e^{H(gh^{-1})(2-\ell-s)}r_\ell(g)f(0_{\underline{2\ell}},0,1)d\dot{g}\\
        &= \int_K \int_{F^\times}|t|^se^{H(kh^{-1})(2-\ell-s)}r_\ell(k)f(0_{\underline{2\ell}},0,t)dtdk\\
        &=Z_\ell(f,s)+\int_K\int_{F^\times} |t|^s\left(e^{H(kh^{-1})(2-\ell-s)}-1\right)r_\ell(k)f(0_{\underline{2\ell}},0,t) dtdk.
    \end{align*}
    Note that the integrand is zero when $s=2-\ell.$ Therefore, the latter term is holomorphic at $s=2-\ell,$ and it vanishes at $s=2-\ell$ if $\zeta(s)$ is holomorphic at $s=2-\ell$. This implies $\tilde{c}_\ell(f)=\tilde{c}_\ell(r_\ell(h)f).$
\end{proof}

Define another $\SL_2(F)$-equivariant map
\begin{align*}
    \tilde{d}_\ell:(r_\ell,\mathcal{S}(V_{\ell}(F)\oplus F^2))&\lto (r_{\ell-1},\mathcal{S}(V_{\ell-1}(F)\oplus F^2))\\
f&\longmapsto \mathcal{F}_2\big(v\mapsto f(v,0,0)\big).
\end{align*}
It  descends to a linear operator
\begin{align*}
    \tilde{d}_\ell:\widetilde{\mathcal{S}}(X_{\ell}(F))\lto \widetilde{\mathcal{S}}(X_{\ell-1}(F)).
\end{align*}

\begin{lem}\label{lem:ddescent}
    Let $\ell\ge 1$. For $f\in \widetilde{\mathcal{S}}(X_{\ell}(F))$ and $x\in X_{\ell-1}^\circ(F),$
    \begin{align*}
        I(\tilde{d}_\ell(f))(x)=\lim_{|a|\to 0} |a|^{\ell-2}\int_{F} I(f)(ax,0,ay) \psi(y)dy.
    \end{align*}
In particular, if $I(f)=0,$ then $I(\tilde{d}_\ell(f))=0$. 
\end{lem}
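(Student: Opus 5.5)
The plan is to compute the right-hand side directly from the Iwasawa form of $I$ and let $|a|\to 0$ under the integral sign. First I will rewrite $I$ using $N(F)\backslash \SL_2(F)=T(F)K$ and the normalization $dn\,\frac{d^\times t}{|t|^2}dk$. Since $\rho_\ell(\mathrm{diag}(t,t^{-1}))$ contributes $|t|^\ell f(tv,\cdot)$ and $R(\mathrm{diag}(t,t^{-1}))$ sends $(0,1)$ to $(0,t^{-1})$, for $\varphi\in\mathcal{S}(V_\ell(F)\oplus F^2)$ and $v\in X_\ell^\circ(F)$ I expect
\begin{align*}
I(\varphi)(v)=\int_K\int_{F^\times}|t|^{\ell-2}\,\big(r_\ell(k)\varphi\big)(tv,0,t^{-1})\,d^\times t\,dk ,
\end{align*}
which is the formula already used for $I(\tilde b_\ell)$. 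The same formula holds with $\ell-1$ in place of $\ell$.

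Next I apply this with $v=(ax,0,ay)$, where $x\in X_{\ell-1}^\circ(F)$ and the last two coordinates are $(v_{2\ell-1},v_{2\ell})=(0,ay)$. I then integrate against $\psi(y)\,dy$ and move the $y$-integral inside the $t$- and $k$-integrals. Writing $\varphi_k=r_\ell(k)\varphi$, the innermost integral is
\begin{align*}
\int_F\varphi_k(tax,0,tay,0,t^{-1})\psi(y)\,dy=|ta|^{-1}\,\big(\mathcal{F}\varphi_k\big)\big(tax,0,(ta)^{-1},0,t^{-1}\big),
\end{align*}
where $\mathcal{F}$ is the Fourier transform in the coordinate $v_{2\ell}$. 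I substitute $t=u/a$, so that $|t|^{\ell-2}=|u|^{\ell-2}|a|^{2-\ell}$. The factor $|a|^{\ell-2}$ then cancels, and the expression becomes
\begin{align*}
\int_K\int_{F^\times}|u|^{\ell-3}\,\big(\mathcal{F}\varphi_k\big)\big(ux,0,u^{-1},0,au^{-1}\big)\,d^\times u\,dk .
\end{align*}
Letting $|a|\to 0$ formally gives $\int_K\int|u|^{\ell-3}(\mathcal{F}\varphi_k)(ux,0,u^{-1},0,0)\,d^\times u\,dk$.

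To identify this limit, I relabel coordinates so that $V_\ell=V_{\ell-1}\oplus F^2$ with the last $F^2$ being $(v_{2\ell-1},v_{2\ell})$. Then $(\mathcal{F}\varphi_k)(w,0,z,0,0)$ is exactly $\tilde d_\ell(\varphi_k)(w,0,z)$. Here $\tilde d_\ell$ sets the auxiliary $F^2$-coordinates to $0$ and applies $\mathcal{F}_2$ in the second variable of the split-off plane, which is the $v_{2\ell}$-variable; I will check this convention carefully. Since $\tilde d_\ell$ is $\SL_2(F)$-equivariant, $\tilde d_\ell(r_\ell(k)\varphi)=r_{\ell-1}(k)\tilde d_\ell(\varphi)$. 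Hence the limit is $\int_K\int|u|^{\ell-3}\big(r_{\ell-1}(k)\tilde d_\ell\varphi\big)(ux,0,u^{-1})\,d^\times u\,dk=I(\tilde d_\ell\varphi)(x)$, by the Iwasawa formula with $\ell-1$ in place of $\ell$. The last assertion of the lemma is then immediate, because the right-hand side of the lemma depends only on $I(f)$.

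The main obstacle is analytic justification: the exchange of the $dy$ integral with $d^\times t\,dk$, and the passage of the limit inside. I will handle both with uniform Schwartz bounds over compact $K$. The function $(k,w)\mapsto r_\ell(k)\varphi(w)$ is bounded by a fixed Schwartz majorant, since $r_\ell(K)$ acts continuously. For the Fubini step I would rather do the $y$-integral first as an iterated integral and only afterwards change variables, checking absolute convergence of the resulting $(u,k)$-integral. For dominated convergence, note that $\mathcal{F}\varphi_k$ is bounded by $C_N(1+|ux|)^{-N}(1+|u|^{-1})^{-N}$ uniformly in the last coordinate $au^{-1}$. This bound gives integrability of $|u|^{\ell-3}$ both as $u\to0$ (rapid decay in $u^{-1}$) and as $u\to\infty$ (decay in $ux$, using $x\neq 0$), including for small $\ell$. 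With this majorant, independent of $a$, dominated convergence yields the limit.
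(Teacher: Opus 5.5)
Your proposal is correct and follows essentially the same route as the paper: Iwasawa formula, Fubini, the substitution $t\mapsto ta$, and dominated convergence with a majorant of the form $|t|^{\ell-3}f'(tx,t^{-1})$. The only differences are cosmetic. The paper runs the computation in the opposite direction, starting from $I(\tilde d_\ell(f))(x)$, inserting $t^{-1}a$ into the last auxiliary coordinate and then pulling the limit outside. It also replaces $f$ by its $r_\ell(K)$-average at the outset, whereas you carry the $K$-integral with a uniform Schwartz bound.
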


\begin{proof}
Let $f\in \mathcal{S}(V_{\ell+1}(F)).$ We may assume $f$ is $r_\ell(K)$-invariant. Then for $x\in X_{\ell-1}^\circ(F),$ 
\begin{align*}
    I(\tilde{d}_\ell(f))(x)=\int_{F^\times}\left(\int_{F} |t|^{\ell-2} f(tx,0,ty,0,0)\psi(y)dy\right) d^\times t.
\end{align*}
For $a\in F$ consider the function
\begin{align*}
    f_{x,a}(t):=\int_{F} |t|^{\ell-2} f(tx,0,ty,0,t^{-1}a)\psi(y)dy.
\end{align*}
It is continuous in $a,$ so $$ I(\tilde{d}_\ell(f))(x)=\int_{F^\times}\lim_{|a|\to 0} f_{x,a}(t)d^\times t.$$ We claim the limit and the integral can be interchanged. Assuming the claim, we have
\begin{align*}
    I(\tilde{d}_\ell(f))(x)&=\lim_{|a|\to 0}\int_{F^\times}f_{x,a}(t)d^\times t =\lim_{|a|\to 0}\int_{F^\times}\int_{F} |t|^{\ell-2} f(tx,0,ty,0,t^{-1}a)\psi(y)dyd^\times t.
\end{align*}
For $a\neq 0,$ since $f$ is Schwartz, the integral over $y$ and $t$ are absolutely convergent. Thus by the Fubini-Tonelli theorem and a change of variables $t\mapsto ta,$ it equals
\begin{align*}
    &\lim_{|a|\to 0}\int_{F}\int_{F^\times} |t|^{\ell-2} f(tx,0,ty,0,t^{-1}a)\psi(y)d^\times tdy=\lim_{|a|\to 0}\int_{F} |a|^{\ell-2}I(f)(ax,0,ay) \psi(y)dy.
\end{align*}

To justify the claim, observe that
\begin{align*}
    |f_{x,a}(t)|\le |t|^{\ell-3}f'(tx,t^{-1})
\end{align*}
for some $f'\in \mathcal{S}(F^{2\ell-1})$ depending only on $f$. Since $t\mapsto |t|^{\ell-3}f'(tx,t^{-1})$ is a function in $L^1(F^\times, d^\times t),$ the claim follows from the Lebesgue's dominated convergence theorem.  
\end{proof}

Define the following subspaces of $\widetilde{\mathcal{S}}(X_\ell(F))$
\begin{align*}
    W_\ell&=W_\ell(F):=\left(\mathcal{S}(V_\ell(F))\hat{\otimes} \mathcal{S}(F^2-\{0\})\right)_{r_\ell(\SL_2(F))},\\
    W_\ell'&=W_\ell'(F):=I^{-1}(\mathcal{S}_{\mathrm{ES}}(X_\ell(F))).
\end{align*}
Here $\hat{\otimes}$ is the algebraic tensor product when $F$ is nonarchimedean, and is the completed projective tensor product when $F$ is archimedean. Clearly $I(W_\ell)=\mathcal{S}_{\mathrm{ES}}(X_\ell(F))$ and $W_\ell'=I^{-1}I(W_\ell).$ Since $\tilde{d}_\ell(W_\ell)=0,$ from Lemma \ref{lem:ddescent} we deduce that

\begin{cor}\label{eq:dvanish}
    We have $I(\tilde{d}_\ell(W'_\ell))=0$.\qed
\end{cor}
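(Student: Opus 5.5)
The corollary asserts that $I(\tilde{d}_\ell(f))=0$ for every $f\in W'_\ell$. I will prove it by moving $f$ into $W_\ell$ along the kernel of $I$, on which $I\circ\tilde{d}_\ell$ already vanishes by Lemma \ref{lem:ddescent}.

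First I unwind the definitions. Let $f\in W'_\ell=I^{-1}(\mathcal{S}_{\mathrm{ES}}(X_\ell(F)))$. Since $I(W_\ell)=\mathcal{S}_{\mathrm{ES}}(X_\ell(F))$, there is some $w\in W_\ell$ with $I(w)=I(f)$. By linearity of $I$ this gives $I(f-w)=0$.

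Next I apply Lemma \ref{lem:ddescent} to $f-w\in\widetilde{\mathcal{S}}(X_\ell(F))$. Its ``in particular'' clause says that if $I(f-w)=0$, then $I(\tilde{d}_\ell(f-w))=0$. By linearity of $\tilde{d}_\ell$ and $I$, this means $I(\tilde{d}_\ell(f))=I(\tilde{d}_\ell(w))$.

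It remains to show that $\tilde{d}_\ell$ vanishes on $W_\ell$. The space $W_\ell$ is the image in the coinvariants of $\mathcal{S}(V_\ell(F))\hat{\otimes}\mathcal{S}(F^2-\{0\})$. Any representative $\phi$ of $w$ therefore vanishes at every $(v,0,0)$, because its $F^2$-component is supported away from the origin. Hence $\tilde{d}_\ell(\phi)=\mathcal{F}_2(v\mapsto\phi(v,0,0))=0$. Because $\tilde{d}_\ell$ is $\SL_2(F)$-equivariant, it descends to coinvariants, so $\tilde{d}_\ell(w)=0$.

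Combining the two steps gives $I(\tilde{d}_\ell(f))=I(\tilde{d}_\ell(w))=0$. The only point that needs care is the archimedean case, where $\hat{\otimes}$ is the completed tensor product. Evaluation at $(v,0,0)$ is continuous and kills the algebraic tensors, so it kills their completion as well; the argument therefore goes through with no real obstacle.
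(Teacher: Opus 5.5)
Your proof is correct and is exactly the paper's argument: the paper notes $W'_\ell=I^{-1}I(W_\ell)$, i.e.\ $W'_\ell=W_\ell+\ker I$, and combines $\tilde{d}_\ell(W_\ell)=0$ with the vanishing of $I\circ\tilde{d}_\ell$ on $\ker I$ from Lemma~\ref{lem:ddescent}. One small wording fix: not every representative of $w$ vanishes at $(v,0,0)$, only one chosen in $\mathcal{S}(V_\ell(F))\hat{\otimes}\mathcal{S}(F^2-\{0\})$; that is enough, since $\tilde{d}_\ell$ descends to coinvariants.
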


\subsection{Poisson summation formulae}

Let $E$ be a number field and $\psi=\otimes \psi_v:E\backslash \A_E\to \CC^\times$ be a nontrivial additive character. Recall our convention of adelic Schwartz spaces in \S \ref{ssec:Schwartz}. Let $\mathcal{S}(V_{\ell}(\A_E)\oplus \A_E^2)$ be defined with respect to the basic functions $\one_{V_{\ell+1}(\OO_v)}.$ Define $\widetilde{\mathcal{S}}(X_{\ell}(\A_E))$ to be its $r_\ell(\SL_2(\A_E))$-coinvariant space. We have a Fourier transform
\begin{align*}
    \widetilde{\mathcal{F}}_{X_\ell}:=\otimes_v \widetilde{\mathcal{F}}_{X_\ell,v}:\widetilde{\mathcal{S}}(X_{\ell}(\A_E))\longrightarrow\widetilde{\mathcal{S}}(X_{\ell}(\A_E)).
\end{align*}
One defines analogously $Z_{\ell}(f,s)$ for $f\in \mathcal{S}(V_{\ell}(\A_E)\oplus \A_E^2).$ Let
\begin{align*}
    \tilde{c}_\ell:\mathcal{S}(V(\A_E)\oplus \A_E^2)&\lto \CC\\
    f&\lmto\begin{cases}
        Z_{\ell}(f,2-\ell) & \textrm{if } \ell>2,\\
        \frac{d}{ds}sZ_{\ell}(f,s+2-\ell)\bigg|_{s=0} & \textrm{if } \ell={1,2}.
    \end{cases}
\end{align*}
Define linear maps
\begin{align*}
    \tilde{d}_\ell:\mathcal{S}(V_\ell(\A_E)\oplus \A_E^2)\lto \mathcal{S}(V_{\ell-1}(\A_E)\oplus \A_E^2)
\end{align*}
on pure tensors by $\tilde{d}_\ell(\otimes f_v):=\otimes \tilde{d}_{\ell,v}(f_v)$. The maps $\tilde{d}_\ell$ are clearly $r_\ell(\SL_2(\A_E))$-invariant. For $i< \ell,$ let
\begin{align*}
    \tilde{d}_{\ell,i}:=\tilde{d}_{i+1}\circ \cdots\circ \tilde{d}_{\ell-1}\circ \tilde{d}_\ell
\end{align*}
These are the linear maps defined in \cite{Getz:Quad}; the notations therein are $c_\ell, d_\ell,d_{\ell,i}$. By \cite[Theorem 7.1]{Getz:Quad} $\tilde{c}_\ell$ for $\ell\ge 3$ are $r_\ell(\SL_2(\A_E))$-invariant, and $\tilde{c}_2+\tilde{c}_1\circ \tilde{d}_2$ is $r_2(\SL_2(\A_E))$-invariant. 
 
The main result of \cite{Getz:Quad} is the following summation formulae.

\begin{thm}\label{thm:Poisson:lift}
Suppose $\ell\ge 2$. Let $f\in \widetilde{\mathcal{S}}(X_\ell(\A_E)).$ Then
\begin{align*}
    &\sum_{\xi\in X_\ell^\circ(E)}I(f)(\xi)+\tilde{c}_\ell(f)+\sum_{i=1}^{\ell-1} \bigg(\tilde{c}_i(\tilde{d}_{\ell,i}(f))+\sum_{\xi\in X_i^\circ(E)} I(\tilde{d}_{\ell,i}(f))(\xi)\bigg)\\
    &+I(\tilde{d}_{\ell,0}(f))(0)+|D|^{\frac{1}{2}}\zeta(2)\tilde{d}_{\ell,0}(f)(0)\\
&=\sum_{\xi\in X_\ell^\circ(E)}I(\widetilde{\mathcal{F}}_{X_\ell}(f))(\xi)+\tilde{c}_\ell(\widetilde{\mathcal{F}}_{X_\ell}(f))+\sum_{i=1}^{\ell-1} \bigg(\tilde{c}_i(\tilde{d}_{\ell,i}(\widetilde{\mathcal{F}}_{X_\ell}(f)))+\sum_{\xi\in X_i^\circ(E)} I(\tilde{d}_{\ell,i}(\widetilde{\mathcal{F}}_{X_\ell}(f)))(\xi)\bigg)\\
    &+I(\tilde{d}_{\ell,0}(\widetilde{\mathcal{F}}_{X_\ell}(f)))(0)+|D|^{\frac{1}{2}}\zeta(2)\tilde{d}_{\ell,0}(\widetilde{\mathcal{F}}_{X_\ell}(f))(0).
\end{align*}
\qed
\end{thm}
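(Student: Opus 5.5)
The plan is to realize both sides of Theorem~\ref{thm:Poisson:lift} as regularized integrals over $\SL_2(E)\backslash\SL_2(\A_E)$ of a theta function, and to obtain the identity from two ingredients: the $\SL_2(\A_E)$-equivariance of $\mathcal{F}_\wedge$ together with Poisson summation on $E^2$, and an induction on $\ell$ driven by $\tilde d_\ell$. By the definition of $\widetilde{\mathcal{S}}(X_\ell(\A_E))$ as a coinvariant space, it suffices to treat a pure tensor $f\in\mathcal{S}(V_\ell(\A_E)\oplus\A_E^2)$ and to check that every term is $r_\ell(\SL_2(\A_E))$-invariant. For the terms $\tilde c_i$ with $i\ge 3$ this is \cite[Theorem 7.1]{Getz:Quad}; the terms $\tilde c_2$ and $\tilde c_1\circ\tilde d_2$ are only invariant in the combination $\tilde c_2+\tilde c_1\circ\tilde d_2$, which appears in exactly this combination in the formula.

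The first step is to form
\begin{align*}
\Theta_f(g):=\sum_{(v,w)\in V_\ell(E)\oplus E^2} r_\ell(g)f(v,w),\qquad g\in\SL_2(\A_E).
\end{align*}
Poisson summation in the $E^2$ variable for the symplectic transform $\mathcal{F}_\wedge$ gives $\Theta_f=\Theta_{\widetilde{\mathcal{F}}_{X_\ell}(f)}$. I then split the sum according to whether $w=0$.
\begin{itemize}
\item For $w\neq0$, the set $E^2-\{0\}$ is the orbit $(0,1)\cdot\SL_2(E)$ with stabilizer $N(E)$. Unfolding therefore turns the integral of this part into an integral over $N(E)\backslash\SL_2(\A_E)$ of $\sum_{v}r_\ell(g)f(v,0,1)$.
\item For $w=0$, the sum is the $V_\ell$-theta function of $f(\cdot,0,0)$. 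After the partial Fourier transform $\mathcal{F}_2$ and Poisson summation on the last coordinate line, this is the theta function attached to $\tilde d_\ell(f)$ on $V_{\ell-1}\oplus\A_E^2$.
\end{itemize}
In the unfolded $w\neq0$ term, integrating over $N(E)\backslash N(\A_E)$ kills every $v$ with $Q_\ell(v)\neq0$, since these contribute the nontrivial character $\psi(nQ_\ell(v))$. The vectors $v\in X_\ell^\circ(E)$ give $\sum_\xi I(f)(\xi)$, and $v=0$ gives the zeta integral $Z_\ell(f,s)$.

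None of these integrals converges, so the second step is the regularization. I would insert $e^{H(g)s'}$, or equivalently pair with an Eisenstein series $E(g,s)$ induced from $B$, and take the constant term or residue at the appropriate point. The $v=0$ contribution is then $Z_\ell(f,s)$ at $s=2-\ell$. This is where $\tilde c_\ell(f)$ comes from: for $\ell\ge3$ it is the value $Z_\ell(f,2-\ell)$, while for $\ell=1,2$ the double pole forces the derivative $\frac{d}{ds}sZ_\ell$. Applying the whole procedure inductively to $\tilde d_\ell(f)$, and descending to $\ell=0$ where only $\tilde d_{\ell,0}(f)(0)$ survives, produces the remaining terms $\tilde c_i(\tilde d_{\ell,i}f)+\sum_{\xi\in X_i^\circ(E)}I(\tilde d_{\ell,i}f)(\xi)$. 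The bottom contributes $I(\tilde d_{\ell,0}f)(0)$ and $|D|^{1/2}\zeta(2)\tilde d_{\ell,0}(f)(0)$; the latter arises as the residue of the $\SL_2$-Eisenstein series, with the constant coming from $\kappa$, the volume of $\SL_2(E)\backslash\SL_2(\A_E)$, and $\varepsilon(s)=|D|^{1/2-s}$. Since the regularized integral of $\Theta_f$ is symmetric under $\widetilde{\mathcal{F}}_{X_\ell}$, equating the two expansions gives the theorem.

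I expect the main obstacle to be the analytic bookkeeping in this regularization. One must show that the truncated or Eisenstein-weighted integral is the same for $f$ and for $\widetilde{\mathcal{F}}_{X_\ell}(f)$, and that the terms polar in $s$ cancel. This is most delicate at $\ell=1,2$, where the non-invariance of $\tilde c_2$ alone must be absorbed by $\tilde c_1\circ\tilde d_2$. My first attempt would be Arthur-style truncation $\Lambda^T\Theta_f$: compute the constant term along $N$ explicitly, which consists exactly of the $w=0$ term plus the $v=0$ part of the $w\ne0$ term. Then I would show that the truncated integral is a polynomial-exponential in $T$ whose $T$-independent part is the displayed left-hand side, and use the invariance of $\Lambda^T$ under $\widetilde{\mathcal{F}}_{X_\ell}$ to conclude.
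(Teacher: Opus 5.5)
The paper does not prove this statement. It is quoted from \cite{Getz:Quad}, so I assess your argument on its own. Its formal skeleton is right. Poisson summation for $\mathcal{F}_\wedge$ gives $\Theta_f=\Theta_{\widetilde{\mathcal{F}}_{X_\ell}(f)}$ pointwise. \cite[Lemma 3.2]{Getz:Quad} plus Poisson summation in the last coordinate identifies the $w=0$ part with $\Theta_{\tilde d_\ell(f)}$, so $\Theta_f=\sum_{i=0}^{\ell}\Theta^{w\neq 0}_{\tilde d_{\ell,i}(f)}+\tilde d_{\ell,0}(f)(0)$. Unfolding the $w\neq 0$ pieces then reproduces the left-hand side term by term, with the last constant equal to $\mathrm{vol}(\SL_2(E)\backslash\SL_2(\A_E))=|D|^{1/2}\zeta(2)$ in the chosen measures.

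\textbf{The gap.} The whole content of the theorem lies in the regularization, which you leave open, and the mechanisms you sketch are not correct as stated.

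(1) Inserting $e^{s'H(g)}$ into the unfolded integral is not equivalent to pairing with an Eisenstein series. In $\int\Theta_f(g)E(g,s)\,dg$ you can unfold $E$, but that gives an integral of the constant term of $\Theta_f$, not $\sum_\xi I(f)(\xi)$. Or you can unfold $\Theta^{w\neq0}_f$, but then the anisotropic $v$ are no longer killed, since $E$ is not $N(\A_E)$-invariant. The weighted unfolded integral is instead the integral over $[\SL_2]$ of $\sum_v\sum_{w\neq 0}\big(\rho_\ell(g)f(\cdot,wg)\big)(v)\,\|wg\|^{-s'}$, where $\|\cdot\|$ is normalized by $e^{-H(g)}=\|(0,1)g\|$. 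This is not $\widetilde{\mathcal{F}}_{X_\ell}$-symmetric for $s'\neq 0$, so the symmetry of its continuation to $s'=0$ is exactly what would need proof.

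(2) There is no double pole. The global $Z_i$ are Tate integrals with simple poles. At $s=0$ the residues of $Z_2(\phi,s)$ and $Z_1(\tilde d_2\phi,s+1)$ are $-c\,\Psi_\phi(0)$ and $+c\,\widehat{\Psi}_{\tilde d_2\phi}(0)=+c\,\Psi_\phi(0)$ for the same constant $c$. They cancel only if one regularizing parameter is used uniformly along the tower, which is what $\tilde c_2+\tilde c_1\circ\tilde d_2$ encodes.

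(3) The constant term of $\Theta_f$ along $N$ is not ``the $w=0$ term plus the $v=0$, $w\neq 0$ term'', since neither piece is $N(\A_E)$-invariant. What is true, and what you need, is that $\Theta_f(g)-\sum_{b\in E}(r_\ell(g)f)(0,0,b)$ is rapidly decreasing on Siegel sets. Hence $\Theta_f$ has the single growth exponent $e^{(\ell+1)H(g)}$.

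\textbf{How the truncation route can be completed.} Integrate $\Theta_f$ against the indicator $\one_{\le T}$ of the truncated fundamental domain. This is automatically symmetric under $f\mapsto\widetilde{\mathcal{F}}_{X_\ell}(f)$. Then proceed as follows.
\begin{itemize}
\item For $\gamma\notin B(E)$ one has $H(\gamma g)\le -H(g)$. So on $N(E)\backslash\SL_2(\A_E)$ the unfolded cutoff equals $\one_{H(g)\le T}$ wherever $H(g)\ge -T$.
\item On $\{H(g)<-T\}$ the function $\sum_v|(r_i(g)\phi_i)(v,0,1)|$ decays rapidly in $-H(g)$, so that region contributes $O(e^{-NT})$.
\item Hence at level $i$ the anisotropic $v$ contribute $O(e^{-NT})$. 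The isotropic $v\neq 0$ give $\sum_\xi I(\phi_i)(\xi)+o(1)$. The term $v=0$ gives $\int_{|t|\ge e^{-T}}|t|^{2-i}\Psi_{\phi_i}(t)\,d^\times t$.
\item By Tate's method, the $T$-independent part of this last integral is $\tilde c_i(\phi_i)$ for $i\ge1$: the continued value for $i\ge 3$, and the Laurent constant term for $i=1,2$. For $i=0$ it is $I(\phi_0)(0)$.
\item The $T$-dependent terms in $e^{(i-1)T}$ and $e^{(i-2)T}$ (or $T$, when the exponent vanishes) cancel between consecutive levels. This uses $\widehat{\Psi}_{\phi_i}(0)=\Psi_{\phi_{i+1}}(0)$. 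What survives is the top-level $e^{(\ell-1)T}$ term plus decaying terms.
\end{itemize}
Comparing the $T$-independent parts for $f$ and $\widetilde{\mathcal{F}}_{X_\ell}(f)$ then gives the theorem. Without these steps, the proposal establishes the identity only formally.
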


\begin{rem}
    We point out that there is a typo in \cite[Theorem 1.1]{Getz:Quad}. Based on \cite[Theorem 4.1]{Getz:Quad}, in the notation of loc. cit., terms $I(d_{\ell,0}(f))(0)$ and $I(d_{\ell,0}(\mathcal{F}_{X_\ell}(f)))(0)$ are missed.
\end{rem}

We implement the idea in \cite[\S 12]{Getz:Hsu} to prove that the local Fourier transform $\widetilde{\mathcal{F}}_{X_\ell}$ descends to an automorphism of $I(\widetilde{\mathcal{S}}(X_\ell(F)))$ as a consequence of the summation formula.

\begin{lem} \label{lem:nonzero}
Let $\ell\ge 2$. Suppose $F$ is nonarchimedean. For any $x_0 \in X_{\ell}^\circ (F)$ there exists $f \in W_{\ell}$ such that $I(f)(0)=0$ and $I(\widetilde{\mathcal{F}}_{X_\ell}(f))(x_0) \neq 0.$
\end{lem}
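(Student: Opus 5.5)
The plan is to build $f$ as a pure tensor $f=\phi_1\otimes\phi_2$ with $\phi_1\in\mathcal{S}(V_\ell(F))$ and $\phi_2\in C^\infty_c(F^2-\{0\})$, so that $f\in W_\ell$ automatically. Both $I(f)(0)$ and $I(\widetilde{\mathcal{F}}_{X_\ell}(f))(x_0)$ will then reduce to integrals over a small compact region of $N(F)\backslash \SL_2(F)$, on which the Weil representation acts trivially on $\phi_1$. Here $I(f)(0)$ means the same orbital integral $\int_{N(F)\backslash\SL_2(F)} r_\ell(g)f(0,0,1)\,d\dot g$ evaluated at $v=0$. We identify $N(F)\backslash\SL_2(F)\cong F^2-\{0\}$ via $g\mapsto (0,1)g$. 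Then for any pure tensor and any $v$,
\begin{align*}
I(\phi_1\otimes\phi_2)(v)=\int_{N(F)\backslash\SL_2(F)}\rho_\ell(g)\phi_1(v)\,\phi_2((0,1)g)\,d\dot g .
\end{align*}

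First I fix an open compact subgroup $K'\le \SL_2(\OO)$ small enough that $\phi_1:=\one_{x_0+\varpi^n V_\ell(\OO)}$ is $\rho_\ell(K')$-invariant. I take $n$ large so that $0\notin x_0+\varpi^nV_\ell(\OO)$; thus $\phi_1(0)=0$ and $\phi_1(x_0)=1$. Next I choose $\phi_2$ so that both $\phi_2$ and $\mathcal{F}_\wedge(\phi_2)$ are supported in a small neighbourhood of the point $(0,1)$, inside the orbit $(0,1)K'$. Concretely, I take $\phi_2(w)=\psi(w_1u_2-w_2u_1)\one_{(0,1)+L}(w)$ for a small lattice $L\subset F^2$ and a suitable $u$. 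The Fourier transform of a lattice indicator twisted by a character is a translate of the dual-lattice indicator times a character. I will tune $u$ and $L$ so that $\mathcal{F}_\wedge\phi_2$ is (a character times) $\one_{(0,1)+L'}$, with $L'$ also small. Both supports then avoid $0$, so $f\in W_\ell$.

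Computing the two quantities: on the support of $\phi_2$ we have $\rho_\ell(g)\phi_1=\phi_1$, so $I(f)(0)$ is a multiple of $\phi_1(0)=0$. Similarly $I(\widetilde{\mathcal{F}}_{X_\ell}(f))(x_0)=\phi_1(x_0)\int_{N\backslash NK'}\mathcal{F}_\wedge\phi_2((0,1)g)\,d\dot g$.

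The main obstacle is to arrange that this last integral is nonzero. The oscillating character could make it vanish, and the choices of $u$, $L$ and $L'$ must be compatible with $L,L'$ both being small while the supports stay inside $(0,1)K'$. I would first try to choose $u$ so that the character on $(0,1)+L'$ is trivial, i.e.\ $\psi$ is trivial on the relevant pairing. This makes the integral a positive volume, and since $\phi_1(x_0)=1$ it gives $I(\widetilde{\mathcal{F}}_{X_\ell}(f))(x_0)\neq0$. If that choice is incompatible with the support conditions, I would fall back to averaging $\phi_2$ over a small subgroup of $K'$ to kill the oscillation. Alternatively I would replace $f$ by $f-r_\ell(h)f$ for suitable $h$, which changes nothing in the coinvariants but can cancel unwanted support near $0$.
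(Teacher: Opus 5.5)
Your verification of $I(f)(0)=0$ is correct. On the support of $g\mapsto\phi_2((0,1)g)$ one has $g\in N(F)K'$, and there $\rho_\ell(g)\phi_1(0)=\phi_1(0)=0$. (This localization is genuinely needed: $\phi_1(0)=0$ alone would not suffice, since $\rho_\ell\begin{psmatrix}0&1\\-1&0\end{psmatrix}\phi_1(0)=\int_{V_\ell(F)}\phi_1\neq 0$.)

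The gap is in the nonvanishing, and it cannot be closed by tuning $u$, $L$, $L'$: under your hypotheses the quantity is forced to be zero. Under $N(F)\backslash\SL_2(F)\cong F^2-\{0\}$, $Ng\mapsto(0,1)g$, the measure $d\dot g$ corresponds to a constant multiple $c\,dw$ of Haar measure on $F^2$. Also $\mathcal{F}_\wedge^2=\mathrm{id}$. So if $\mathcal{F}_\wedge\phi_2$ is supported in $(0,1)K'$, your own formula gives
\[
I(\widetilde{\mathcal{F}}_{X_\ell}(f))(x_0)=c\,\phi_1(x_0)\int_{F^2}\mathcal{F}_\wedge\phi_2(w)\,dw=c\,\phi_1(x_0)\,\phi_2(0)=0,
\]
because $\phi_2\in\mathcal{S}(F^2-\{0\})$. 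The same vanishing holds whenever $g\mapsto\rho_\ell(g)\phi_1(x_0)$ is constant on the support of $\mathcal{F}_\wedge\phi_2((0,1)g)$.

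So the oscillation is not an artifact of the character. If you made $\mathcal{F}_\wedge\phi_2$ a positive multiple of $\one_{(0,1)+L'}$, you would get $\phi_2(0)>0$, i.e.\ $f\notin W_\ell$. The fallbacks do not help either:
\begin{itemize}
\item Averaging $\phi_2$ over a subgroup of $K'$ commutes with $\mathcal{F}_\wedge$ and keeps $\phi_2(0)=0$.
\item Changing $f$ by elements $r_\ell(h)f'-f'$ does not change its image in $\widetilde{\mathcal{S}}(X_\ell(F))$, which is all that $I$ and $I\circ\widetilde{\mathcal{F}}_{X_\ell}$ see. Note that $f-r_\ell(h)f$ itself maps to $0$.
\end{itemize}

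What is missing is a genuine source of nonvanishing. Since $\int_{F^2}\mathcal{F}_\wedge\phi_2=0$, a nonzero value can only come from the variation of $g\mapsto\rho_\ell(g)\phi_1(x_0)$ on $N(F)\backslash\SL_2(F)$. So $\mathcal{F}_\wedge\phi_2$ must reach regions (toward $0\in F^2$, or other cells) where $\rho_\ell$ moves $\phi_1$, and that integral would then have to be controlled.

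The paper avoids any computation. It uses Proposition \ref{prop:action}, together with the facts that $\mathrm{O}_{V_\ell}(F)$ commutes with $\widetilde{\mathcal{F}}_{X_\ell}$ and acts transitively on $X_\ell^\circ(F)$. This reduces the lemma to showing that $I\circ\widetilde{\mathcal{F}}_{X_\ell}$ is not identically zero on an $\mathrm{O}_{V_\ell}(F)$-stable space of test functions.

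That non-vanishing comes from the codimension-one fact
\[
\mathcal{F}_\wedge(\mathcal{S}(F^2-\{0\}))=\{\phi\in\mathcal{S}(F^2):\textstyle\int_{F^2}\phi=0\}.
\]
It gives $\widetilde{\mathcal{S}}(X_\ell(F))=W_\ell+\widetilde{\mathcal{F}}_{X_\ell}(W_\ell)$. So if $I\circ\widetilde{\mathcal{F}}_{X_\ell}$ vanished on $W_\ell$, then $I(\widetilde{\mathcal{S}}(X_\ell(F)))=\mathcal{S}_{\mathrm{ES}}(X_\ell(F))$. That is false: e.g.\ for unramified $\psi$, the function $b_\ell=I(\one_{V_{\ell+1}(\OO)})$ is unbounded near $0$.

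To build in the constraint $I(f)(0)=0$ (exactly the subtlety your localization handles), work on the $\mathrm{O}_{V_\ell}(F)$-stable hyperplane $W_\ell^0=\{f\in W_\ell:I(f)(0)=0\}$. Suppose $I\circ\widetilde{\mathcal{F}}_{X_\ell}$ vanished on $W_\ell^0$. Since $\tilde{\sigma}_\ell(h)f-f\in W_\ell^0$ for $f\in W_\ell$ and $h\in\mathrm{O}_{V_\ell}(F)$, it would send $W_\ell$ to $\mathrm{O}_{V_\ell}(F)$-invariant functions. These are constant, and lie in $L^2(X_\ell(F))$, hence are $0$. This contradicts the previous step.
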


\begin{proof}  Consider
\begin{align*}
    W:=\mathcal{S}(V_\ell(F)-\{0\})\otimes \mathcal{S}(F^2-\{0\}).
\end{align*}
Note that $I(f)(0)=0$ for $f\in W.$ The group $\mathrm{O}_{V_\ell}(F)$ stabilizes $W$ and acts transitively on $X_\ell^\circ(F).$ Therefore, if $I(\widetilde{\mathcal{F}}_{X_\ell}(f))(x_0)=0$ for all $f\in W,$ then $I(\widetilde{\mathcal{F}}_{X_\ell}(f))=0$ for all $f\in W$ by Proposition \ref{prop:action}. However, $\mathcal{F}_{\wedge}(\mathcal{S}(F^2-\{0\}))$ has codimension $1$ in $\mathcal{S}(F^2),$ so one can easily find $f\in W$ such that $I(\widetilde{\mathcal{F}}_{X_\ell}(f))\neq 0.$
\end{proof}

\begin{lem}\label{lem:des}
Let $\ell\ge 2.$ There is a (unique) linear operator $\mathcal{F}:I(\widetilde{\mathcal{S}}(X_{\ell}(F)))\to I(\widetilde{\mathcal{S}}(X_{\ell}(F)))$ which makes the following diagram commute
$$
\begin{CD}
\widetilde{\mathcal{S}}(X_\ell(F)) @>{\widetilde{\mathcal{F}}_{X_\ell}}>> \widetilde{\mathcal{S}}(X_\ell(F))\\
@V{ I}VV @V{ I}VV
\\
I(\widetilde{\mathcal{S}}(X_{\ell}(F))) @>{\mathcal{F}}>>  I(\widetilde{\mathcal{S}}(X_{\ell}(F))).
\end{CD}
$$
\end{lem}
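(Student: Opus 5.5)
We must show: if $f\in\widetilde{\mathcal{S}}(X_\ell(F))$ satisfies $I(f)=0$, then $I(\widetilde{\mathcal{F}}_{X_\ell}(f))=0$. Uniqueness is clear because $I$ is surjective onto its image. Following the idea of \cite[\S 12]{Getz:Hsu}, I will globalize: deduce the local vanishing from the global summation formula, Theorem \ref{thm:Poisson:lift}. The plan is to embed the local place $F=E_w$ into a number field $E$ and build global test functions. These should kill all boundary terms and all but one rational point on the Fourier side.

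Fix $f_w\in\widetilde{\mathcal{S}}(X_\ell(E_w))$ with $I(f_w)=0$ and a point $y_0\in X_\ell^\circ(E_w)$. By Proposition \ref{prop:action}, the relation $I\circ\tilde\sigma_\ell(g)=\sigma_\ell(g)\circ I$ holds for $g\in P_{\ell+1}$. Moreover, $\widetilde{\mathcal{F}}_{X_\ell}$ conjugates $P_{\ell+1}$-elements through the Weyl element in $\tilde\sigma_\ell$. Using this and the transitivity of $\mathrm{O}_{V_\ell}(E_w)$ together with $E_w^\times$-scaling, I may translate $f_w$ so that $y_0$ is replaced by a rational point $\xi_0\in X_\ell^\circ(E)$. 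Replacing $f_w$ by its $\tilde\sigma_\ell(g)$-translate preserves the hypothesis $I(f_w)=0$ when $g\in P_{\ell+1}$. The conjugate of $\mathrm{O}_{V_\ell}$ by the Weyl element is again $\mathrm{O}_{V_\ell}$, and the torus element becomes its inverse. So it suffices to prove $I(\widetilde{\mathcal{F}}_{X_\ell}(f_w))(\xi_0)=0$ for rational $\xi_0$, which is dense.

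Next I choose the components at the other places:
\begin{itemize}
\item At an auxiliary finite place $v_1\neq w$, take $f_{v_1}\in W_\ell(E_{v_1})$ as in Lemma \ref{lem:nonzero}, with $I(\widetilde{\mathcal{F}}_{X_\ell}(f_{v_1}))(\xi_0)\neq0$. Multiplying by the character $\overline\psi(\langle v,x\rangle)$ via $u(v)$ if needed, also arrange that $I(f_{v_1})$ is supported in $X_\ell^\circ$ and $\tilde d_\ell(f_{v_1})=0$.
\item At a second finite place $v_2$, take a function whose Fourier transform lies in $W_\ell(E_{v_2})$, so that $\tilde d_\ell(\widetilde{\mathcal{F}}_{X_\ell}f_{v_2})=0$.
\item At the remaining places, take functions whose transforms have small support around $\xi_0$ (at finitely many places) and basic functions elsewhere.
\end{itemize}

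Because $\tilde d_{\ell,i}$ and (after the invariance results of \cite[Theorem 7.1]{Getz:Quad}) the $\tilde c_i$ factor through local vanishing, the following hold for the global $f=\otimes f_v$:
\begin{itemize}
\item All boundary terms on both sides of Theorem \ref{thm:Poisson:lift} vanish.
\item The left side is a sum over $I(f)(\xi)=\prod_v I(f_v)(\xi)$, which is $0$ since $I(f_w)=0$.
\item The right side reduces, by choosing the supports at the auxiliary places (strong approximation for the discrete set $X_\ell^\circ(E)$), to the single term $\xi=\xi_0$.
\end{itemize}
This yields $I(\widetilde{\mathcal{F}}_{X_\ell}f_w)(\xi_0)\prod_{v\ne w}I(\widetilde{\mathcal{F}}_{X_\ell}f_v)(\xi_0)=0$ with the other factors nonzero, as required.

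The main obstacle is making the boundary terms vanish. The functionals $\tilde c_i$, in particular $\tilde c_2+\tilde c_1\circ\tilde d_2$, are defined globally via zeta integrals, not as products of local factors. I would first try to get local vanishing of $\tilde d_\ell$ at $v_1$ for $f$ and at $v_2$ for $\widetilde{\mathcal{F}}_{X_\ell}(f)$. Then $\tilde d_{\ell,i}$ vanishes for all $i<\ell$, and $\tilde c_\ell$ vanishes because $Z_\ell(\cdot,s)$ involves the value at $0_{\underline{2\ell}}$, where the $v_1$ component (supported away from the origin in $V_\ell$) is zero. The delicate point is the archimedean case, where the input functions are not compactly supported. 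There the support argument for isolating $\xi_0$ must be replaced by a limiting argument. I would use continuity of $I$ and approximate with functions concentrating near $\xi_0$, controlling the tail of the sum by the polynomial decay provided by \cite[Lemma 3.1]{Getz:Quad}.
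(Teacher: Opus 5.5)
Your skeleton is the paper's. You reduce to $\widetilde{\mathcal{F}}_{X_\ell}(\ker I)\subseteq\ker I$, globalize, use Lemma~\ref{lem:nonzero} at $v_1$, kill the boundary terms of Theorem~\ref{thm:Poisson:lift} with two auxiliary finite places, isolate $\xi_0$, and conclude. Your $\mathrm{O}_{V_\ell}(F)$-transitivity variant of the last step is valid, since $\widetilde{\mathcal{F}}_{X_\ell}$ commutes with $\tilde\sigma_\ell(\mathrm{O}_{V_\ell}(F))$. The $\tilde d$-terms are handled correctly, via $\tilde d_\ell(f_{v_1})=0$ and $\tilde d_\ell(\widetilde{\mathcal{F}}_{X_\ell}f_{v_2})=0$.

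\textbf{The gap is the $\tilde c$-terms, which you single out as the main obstacle.} Your reason for $\tilde c_\ell=0$ is that $Z_\ell(\cdot,s)$ only sees the datum at $0_{\underline{2\ell}}$, where $f_{v_1}$ vanishes. This is false. $Z_\ell(f,s)$ integrates $r_\ell(g)f(0_{\underline{2\ell}},0,1)$ over $N(F)\backslash\SL_2(F)$, and $\rho_\ell(g)$ involves the Fourier transform on $V_\ell$. Take $f=f_1\otimes f_2$ with $f_2\in C_c^\infty(F^2-\{0\})$. At $s=2-\ell$ the integrand of $Z_\ell$ is exactly that of $I(f)(0)$, so $Z_\ell(f,2-\ell)=I(f)(0)$. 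The big cell contributes terms like $\int_{V_\ell}f_1(u)\psi(bQ_\ell(u))\,du$, which do not vanish just because $f_1$ vanishes near $0$. Also, multiplying by $\overline\psi(\langle v,x\rangle)$ changes no supports, so that ``arrangement'' accomplishes nothing.

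\textbf{The missing ingredient is Lemma~\ref{lem:easy}.}
\begin{itemize}
\item For $\ell\ge 3$, $\tilde c_\ell=I(\cdot)(0)$ on $W'_\ell$.
\item For $\ell=2$, $\tilde c_2=0$ on $W'_2$. For $f\in W_2$, the $s$-derivative of $Z_2(f,s)/\zeta(s)$ at $0$ is a multiple of $I(f)(0)$ (compare Lemma~\ref{lem:der} with $\tilde d_2f=0$). This derivative matters because the global $\tilde c_2=\frac{d}{ds}sZ_2(\cdot,s)|_{s=0}$ sees the pole of the global $\zeta$ at $0$. So $\prod_v Z_2(f_v,s)/\zeta_v(s)$ must vanish to second order.
\end{itemize}
Through the Euler factorization, the $\tilde c$-terms therefore vanish once you impose two conditions:
\begin{itemize}
\item $I(f_{v_1})(0)=0$, which Lemma~\ref{lem:nonzero} supplies;
\item $I(\widetilde{\mathcal{F}}_{X_\ell}f_{v_2})(0)=0$, for the right-hand side.
\end{itemize}
You never impose the second condition. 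With only $\widetilde{\mathcal{F}}_{X_\ell}(f_{v_2})\in W_\ell$, the $v_2$-factor of $\tilde c_\ell(\widetilde{\mathcal{F}}_{X_\ell}f)$ is $I(\widetilde{\mathcal{F}}_{X_\ell}f_{v_2})(0)/\zeta_{v_2}(2-\ell)$, which need not vanish. With both conditions, and Lemma~\ref{lem:easy} in place of your support argument, your proof becomes the paper's.

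Your remark on archimedean $F$ is correct and more careful than the paper's one-line appeal to discreteness. There $I(\widetilde{\mathcal{F}}_{X_\ell}f_w)$ is not boundedly supported, so $\xi_0$ can only be isolated in a limit. That limit needs decay of $I(\widetilde{\mathcal{F}}_{X_\ell}f_w)$ at infinity strong enough to dominate the count of rational points in the relevant lattice cosets, not just some polynomial decay.
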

\begin{proof}
        Suppose $\widetilde{\mathcal{F}}_{X_\ell}(\ker I) \leq \ker I.$ Since $\widetilde{\mathcal{F}}_{X_\ell}$ has order $2,$ one has $\widetilde{\mathcal{F}}_{X_\ell}(\ker I)=\ker I$ and the lemma follows. 
  Thus it suffices to show if $f\in\mathcal{S}(V_{\ell+1}(F))$ satisfies $I(f)=0$ then $I(\widetilde{\mathcal{F}}_{X_\ell}(f))=0.$      
        
 Let $E$ be a number field such that $E_v=F$ for some place $v$. Let $f_v\in \mathcal{S}(V_{\ell+1}(F))$ such that $I(f_v)=0$. Choose two finite places $v_1, v_2$ distinct from $v$  and functions $f_{v_i} \in \mathcal{S}(V_{\ell+1}(E_{v_i}))$ such that $f_{v_1}\in W_\ell(E_{v_1}),$ $\widetilde{\mathcal{F}}_{X_\ell}(f_{v_2}) \in W_{\ell}(E_{v_2})$ and $$ I(f_{v_1})(0)=I(\widetilde{\mathcal{F}}_{X_\ell}(f_{v_2}))(0)=0.$$ Choose $f^{vv_1v_2} \in \mathcal{S}(V_{\ell+1}(\A_E^{vv_1v_2}))$. By the fact that $\tilde{d}_\ell(W_\ell(E_{v_i}))=0$ and Lemma \ref{lem:easy} below, terms in Theorem \ref{thm:Poisson:lift} involving $\tilde{c}_i, \tilde{d}_{\ell,i}$ vanish, and we have
    \begin{align}\label{eq:sumvanish}
        \sum_{\xi\in X_\ell^\circ(E)} I(\widetilde{\mathcal{F}}_{X_\ell}(f_{v}f_{v_1}f_{v_2}f^{vv_1v_2}))(\xi)=\sum_{\xi\in X_\ell^\circ(E)} I(f_{v}f_{v_1}f_{v_2}f^{vv_1v_2})(\xi)=0.
    \end{align}
    Let $\xi_0 \in X_\ell^\circ(E).$  By Lemma \ref{lem:nonzero} we can choose $f_{v_1}$ as above so that $I(\widetilde{\mathcal{F}}_{X_\ell}(f_{v_1}))(\xi_0)\neq 0.$  Since $X_\ell(E)$ is discrete in $X_\ell(\mathbb{A}_E)$, we may choose $f_{v_2}$ and $f^{vv_1v_2}$ such that \eqref{eq:sumvanish} equals to $I(\widetilde{\mathcal{F}}_{X_\ell}(f_v))(\xi_0)$. Thus $I(\widetilde{\mathcal{F}}_{X_\ell}(f_{v}))$ vanishes on $X_\ell^\circ(E).$ As $X_\ell^\circ(E)$ is dense in $X_\ell^\circ(F),$ the assertion follows by continuity. 
\end{proof}

\section{Agreement of Schwartz spaces: nonarchimedean}\label{sec:agree}

Proposition \ref{prop:action} and  Lemma \ref{lem:des} imply that for $\ell\ge 2$ the smooth $\mathrm{O}_{V_{\ell+1}}(F)$-representation $(\tilde{\sigma}_\ell,\widetilde{\mathcal{S}}(X_{\ell}(F)))$ equips $I(\widetilde{\mathcal{S}}({X_\ell}(F)))$ with a structure of a smooth $\mathrm{O}_{V_{\ell+1}}(F)$-representation $\sigma_\ell'$. By \eqref{eq:Pequal}, the actions of $\sigma_\ell(P_{\ell+1}(F))$ and $\sigma'_\ell(P_{\ell+1}(F))$ on $L^2(X_{P_{\ell}}(F))$ agree. 

In this and the following sections, we show that indeed $\sigma_\ell=\sigma_\ell'.$ More precisely, we show that for $\ell\ge 2$ the operator $\mathcal{F}$ in Lemma \ref{lem:des} agrees with $\mathcal{F}_{X_\ell}$ and that the asymptotic maps $(c_\ell,\varepsilon(\ell-2,\psi)d_\ell)$ defined in \S\ref{sec:BK} are the descent of $(\tilde{c}_\ell,\tilde{d}_\ell).$ 

In this section, $F$ is a nonarchimedean local field of characteristic $0.$ For part (ii) of the following lemma, let $\mathbb{P}X_2 \subset \mathbb{P}V_2$ be the projective scheme cut out by $Q_2.$

\begin{lem}\label{lem:easy}
\begin{enumerate}[label=(\roman*)]
    \item  Suppose $\ell\ge 3$. For $f\in W_\ell'$, we have $\tilde{c}_\ell(f)= I(f)(0).$
    \item Suppose $\ell=2$. For $f\in\mathcal{S}(V_2(F)\oplus F^2),$ $x\in X_{2}(F)^1\cong \mathbb{P}X_2(F)$ and  $n\in \ZZ$ sufficiently large (depending only on $f$),
    \begin{align*}
        I(f)(\varpi^nx)=n\tilde{c}_2(f)+\tilde{a}_2(f)(x)
    \end{align*}
    for some function $\tilde{a}_2(f)\in C^\infty(\mathbb{P}X_2(F))$, and the difference
    \begin{align*}
 Z_{2}(f,s)-\tilde{c}_2(f)\zeta(s)
    \end{align*}
    is entire in $s.$ Both $\tilde{c}_2(f)$ and $\tilde{a}_2(f)$ depend only on $I(f)$.
    \item For $f\in \widetilde{\mathcal{S}}(X_1(F)),$ the integral defining $I(f)(x)$ is absolutely convergent for all $x\in X_1(F)$. We have $\tilde{c}_1(f)=\zeta(1)^{-1}I(f)(0).$
\end{enumerate}

\end{lem}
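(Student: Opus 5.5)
The plan is to reduce everything to an explicit one-variable integral and then read off the asymptotics of $I(f)$ near the origin. Since $F$ is nonarchimedean and $K=\SL_2(\OO)$ is compact open, we may replace $f$ by its $r_\ell(K)$-average without changing $I(f)$, $Z_\ell(f,s)$ or $\tilde{c}_\ell(f)$. All three are defined through integrals over $N(F)\backslash \SL_2(F)$, and for $I$ and $Z_\ell$ the $K$-integration is already built in. So assume $f$ is $r_\ell(K)$-invariant.

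By the Iwasawa decomposition and the formulas for $\rho_\ell$ and $R$, we get for $x\in X_\ell(F)$
\begin{align*}
I(f)(x)=\int_{F^\times}|t|^{\ell-2}f(tx,0,t^{-1})\,d^\times t,\qquad Z_\ell(f,s)=\int_{F^\times}f(0,0,t)|t|^{s}\,d^\times t .
\end{align*}
After the substitution $t\mapsto t^{-1}$, the value $Z_\ell(f,2-\ell)$ is formally the same as $I(f)(0)$.

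The key step is an explicit expansion of $I(f)(x)$ for $|x|$ small. Choose $\varepsilon>0$ such that $f(u,0,w)=f(u',0,w)$ whenever $|u-u'|\le\varepsilon$. Such an $\varepsilon$ exists because $f$ is uniformly locally constant, being compactly supported and smooth. Also choose $C$ with $f(u,0,w)=0$ for $|w|>C$. We split the $t$-integral at $|t|=\varepsilon/|x|$.
\begin{itemize}
\item On $|t|\le \varepsilon/|x|$ we have $f(tx,0,t^{-1})=f(0,0,t^{-1})$. This part is a truncated Tate integral $\int_{|t|\ge |x|/\varepsilon}f(0,0,t)|t|^{2-\ell}d^\times t$.
\item On $|t|>\varepsilon/|x|$ we have $|t^{-1}|$ small, so after shrinking $\varepsilon$ we may write $f(tx,0,t^{-1})=f(tx,0,0)$. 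The substitution $t\mapsto t/x_0$, with $x_0$ a scalar of absolute value $|x|$, turns this part into $|x|^{2-\ell}\varphi(x/x_0)$ for a smooth function $\varphi$ on $X_\ell(F)^1$.
\end{itemize}
For the truncated Tate integral, write $f(0,0,t)=f(0,0,0)$ for $|t|$ small and compute the geometric series explicitly. For $\ell\ge3$ this gives
\begin{align*}
Z_\ell(f,2-\ell)+(\text{const})\cdot f(0,0,0)\,|x|^{2-\ell}.
\end{align*}
Here we use that $\zeta(s)$ is holomorphic at $2-\ell$ in the nonarchimedean case. Altogether, $I(f)(x)=Z_\ell(f,2-\ell)+|x|^{2-\ell}\phi(x)$ for $|x|$ small, where $\phi$ is homogeneous of degree $0$.

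For (i), take $f\in W'_\ell$, so that $I(f)$ is the restriction of a function in $C^\infty_c(V_\ell(F))$ and is constant near $0$. Compare $I(f)(x)$ with $I(f)(\varpi x)$ using the expansion. Since $|\varpi|^{2-\ell}\ne 1$, the homogeneous term must vanish. Hence $I(f)(0)=Z_\ell(f,2-\ell)=\tilde{c}_\ell(f)$.

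For (ii), with $\ell=2$, the same computation applies but the exponent is $0$. The truncated Tate integral now contributes a term linear in $n$ when $x=\varpi^n x_1$ with $|x_1|=1$. Its coefficient is $f(0,0,0)$ times a measure constant, and this coefficient coincides with $\zeta(s)^{-1}Z_2(f,s)|_{s=0}=\tilde{c}_2(f)$. The same identification shows that $Z_2(f,s)-\tilde{c}_2(f)\zeta(s)$ is the Tate integral of $f(0,0,t)-f(0,0,0)\one_{\OO}(t)$, up to that constant, and hence is entire. The remaining bounded part of the expansion defines $\tilde{a}_2(f)$ as a smooth function on $X_2(F)^1\cong\mathbb{P}X_2(F)$. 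Because $\tilde{c}_2(f)$ and $\tilde{a}_2(f)$ are recovered from the values $I(f)(\varpi^n x)$ as $n\to\infty$, they depend only on $I(f)$.

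For (iii), with $\ell=1$, the integrand is $|t|^{-1}f(tx,0,t^{-1})$. It is bounded by $|t|^{-1}g(t^{-1})$ for some $g\in\mathcal{S}(F)$, which is integrable against $d^\times t$, so the integral converges absolutely for every $x$, including $x=0$. At $x=0$ we get $I(f)(0)=\int|t|^{-1}f(0,0,t^{-1})d^\times t=Z_1(f,1)=\zeta(1)\tilde{c}_1(f)$.

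I expect the main obstacle to be making the splitting uniform and rigorous. This means controlling the regions where $f(tx,0,t^{-1})$ can be replaced by $f(0,0,t^{-1})$ or by $f(tx,0,0)$, and tracking the measure constants so that the $n$-coefficient in (ii) matches $\tilde{c}_2(f)$ exactly.
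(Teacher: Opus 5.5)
Your proposal is correct and follows essentially the same route as the paper. Both arguments reduce to $r_\ell(\SL_2(\OO))$-invariant $f$, use the Iwasawa decomposition to get the one-variable integrals, split according to which argument of $f$ is small, and evaluate the resulting geometric series. For $\ell\ge 3$, constancy of $I(f)$ near $0$ then kills the $|x|^{2-\ell}$-term; for $\ell=2$, the coefficient linear in $n$ is $f(0)\,\mathrm{vol}(\OO^\times)=\zeta(s)^{-1}Z_2(f,s)|_{s=0}$.

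The paper does this bookkeeping as an explicit sum over $j$, with a single parameter $A$ controlling both support and invariance, and it normalizes $\mathrm{vol}(\OO^\times)=1$. This is equivalent to your $\varepsilon$-splitting. One small slip: on $|t|>\varepsilon/|x|$, the replacement $f(tx,0,t^{-1})=f(tx,0,0)$ comes from taking $|x|$ small, not from shrinking $\varepsilon$.
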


\begin{proof}
Let $f\in \mathcal{S}(V_{\ell+1}(F))$. Note that $I$ and $Z_{\ell}$ are by definition $r_\ell(\SL_2(\OO))$-invariant, so we may assume $f$ is $r_\ell(\SL_2(\OO))$-invariant. By the Iwasawa decomposition, we have for $x\in X_\ell^\circ(F)$
\begin{align*}
        I(f)(x)&=\int_{F^\times} |t|^{2-\ell} f(t^{-1}x,0,t)d^\times t,
    \end{align*}
    and 
    \begin{align*}
        Z_\ell(f,s)=\int_{F^\times} f(0_{\underline{2\ell}},0,t)|t|^sd^\times t.
    \end{align*}
In the computation of the integrals above, $\psi$ only affects the value $\mathrm{vol}(\OO^\times)$ in the intermediate computation, but not the conclusion. Therefore, for simplicity we assume $\psi$ is unramified and thus $\mathrm{vol}(\OO^\times)=1$.

Choose $A\in \ZZ_{\ge 0}$ such that 
\begin{align*}
    f(v+w)=f(v),\quad &\forall\, v\in V_{\ell+1}(F), w\in \varpi^A V_{\ell+1}(\OO),\\ 
    f(v)=0, \quad &\forall v\notin \varpi^{-A}V_{\ell+1}(\OO).
\end{align*}
Let $x\in X_\ell(F)^1$. For any $n\ge 2A-1$ 
\begin{align}\label{eq:exp}
\begin{split}
    I(f)(\varpi^n x)&=\sum_{j=-A}^{\infty} q^{j(\ell-2)}f(\varpi^{n-j}x,0,\varpi^j)\\
    &=\sum_{j=-A}^{A-1} q^{j(\ell-2)}f(0_{\underline{2\ell}},0,\varpi^j)+\sum_{j=A}^{n-A} q^{j(\ell-2)}f(0)+\sum_{j=n-A+1}^{n+A} q^{j(\ell-2)}f(\varpi^{n-j}x,0,0).
\end{split}
\end{align}
For $\ell\neq 2,$ this is
\begin{align*}
&\sum_{j=-A}^{A-1} q^{j(\ell-2)}f(0_{\underline{2\ell}},0,\varpi^j)+\frac{q^{A(\ell-2)}}{1-q^{\ell-2}}f(0)-\frac{q^{(n-A+1)(\ell-2)}}{1-q^{\ell-2}}f(0)\\
    &+q^{(n-A+1)(\ell-2)}\sum_{j=0}^{2A-1} q^{j(\ell-2)}f(\varpi^{A-j-1}x,0,0).
\end{align*}
Suppose $\ell\ge 3$ and $f\in W_\ell'$. By taking $n$ large we have 
\begin{align*}
    I(f)(0)=I(f)(\varpi^n x)=\frac{q^{A(\ell-2)}}{1-q^{\ell-2}}f(0)+\sum_{j=-A}^{A-1} q^{j(\ell-2)}f(0_{\underline{2\ell}},0,\varpi^j).
\end{align*}
On the other hand, since $\zeta(s)$ is holomorphic at $s=2-\ell,$
\begin{align*} 
    \tilde{c}_\ell(f)=Z_{\ell}(f,2-\ell)&=\int_{F^\times}f(0_{\underline{2\ell}},0,a)|a|^s d^\times a \bigg |_{s=2-\ell}\\
    &=\int_{|a|\le q^{-A}}f(0)|a|^s d^\times a \bigg |_{s=2-\ell}+\sum_{j=-A}^{A-1} q^{j(\ell-2)}f(0_{\underline{2\ell}},0,\varpi^j)\\
    &=\frac{q^{A(\ell-2)}}{1-q^{\ell-2}}f(0)+\sum_{j=-A}^{A-1} q^{j(\ell-2)}f(0_{\underline{2\ell}},0,\varpi^j)\\
    &=I(f)(0).
\end{align*}
The same argument applies when $\ell=1$. This proves (i) and (iii).

Now assume $\ell=2$. Then 
\begin{align*}
    Z_2(f,s)=q^{-As}\zeta(s)f(0)+\sum_{j=-A}^{A-1} q^{-sj}f(0_{\underline{4}},0,\varpi^j),
\end{align*}
so $\tilde{c}_2(f)=f(0).$ We can write \eqref{eq:exp} as
\begin{align}\label{eq:forell=2}
\begin{split}
    I(f)(\varpi^nx)&=n\tilde{c}_2(f)+(-2A+1)f(0)+\sum_{j=-A}^{A-1} f(0_{\underline{4}},0,\varpi^j)+\sum_{j=-A+1}^A f(\varpi^{-j}x,0,0)\\
    &=:n\tilde{c}_2(f)+\tilde{a}_2(f)(x).
\end{split}
\end{align}
Since both $I(f)$ and $\tilde{c}_2(f)$ do not depend on the choice of $A$, $\tilde{a}_2(f)$ is well-defined. If $I(f)=0,$ then clearly both $\tilde{c}_2(f)$ and $\tilde{a}_2(f)$ are zero. This completes the proof. 
\end{proof}

\begin{prop}\label{prop:min:rep}
Suppose $\ell \ge 2$. Then $I(\widetilde{\mathcal{S}}(X_\ell(F)))=\mathcal{S}(X_\ell(F)),$ and the map $I$ is $\mathrm{O}_{V_{\ell+1}}(F)$-equivariant. In particular, the diagram
$$
\begin{CD}
\widetilde{\mathcal{S}}(X_\ell(F)) @>{\widetilde{\mathcal{F}}_{X_\ell}}>> \widetilde{\mathcal{S}}(X_\ell(F))\\
@V{ I}VV @V{ I}VV
\\
\mathcal{S}(X_\ell(F)) @>{\mathcal{F}_{X_\ell}}>>  \mathcal{S}(X_\ell(F))
\end{CD}
$$
commutes. Moreover, for $\ell\ge 3$  we have a commutative diagram of $\mathrm{O}_{V_\ell}(F)$-modules
    \begin{align*}
    \begin{CD}
    \widetilde{\mathcal{S}}(X_\ell(F)) @>{(\tilde{c}_\ell,\tilde{d}_{\ell})}>> \CC\oplus\widetilde{\mathcal{S}}(X_{\ell-1}(F))\\
    @V{I}VV @V{(\mathrm{id},I)}VV
    \\
    \mathcal{S}(X_\ell(F)) @>{(c_\ell,\varepsilon(\ell-2,\psi)d_{\ell})}>>  \CC\oplus\mathcal{S}(X_{\ell-1}(F)).
    \end{CD}
    \end{align*}
\end{prop}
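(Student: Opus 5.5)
The plan is to realize both $\sigma_\ell$ and $\sigma'_\ell$ (the structure on $I(\widetilde{\mathcal{S}}(X_\ell(F)))$ coming from Lemma \ref{lem:des}) inside $L^2(X_\ell(F))$ and show they agree on a generating subspace. First I show $\mathcal{S}(X_\ell^\circ(F))\subseteq I(\widetilde{\mathcal{S}}(X_\ell(F)))$. The subspace $I(W)$, with $W=\mathcal{S}(V_\ell(F)-\{0\})\otimes\mathcal{S}(F^2-\{0\})$ as in Lemma \ref{lem:nonzero}, consists of compactly supported smooth functions on $X_\ell^\circ(F)$ and is nonzero. It is also stable under $P_{\ell+1}(F)$, where $\sigma_\ell$ and $\sigma'_\ell$ agree by \eqref{eq:Pequal}. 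By Lemma \ref{lem:Iact} the $P_{\ell+1}(F)$-module $\mathcal{S}(X_\ell^\circ(F))$ is irreducible, so $I(W)=\mathcal{S}(X_\ell^\circ(F))$.

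Next I identify the operator $\mathcal{F}$ of Lemma \ref{lem:des} with $\mathcal{F}_{X_\ell}$. Both $\sigma_\ell$ and $\sigma'_\ell$ are representations of $\mathrm{O}_{V_{\ell+1}}(F)$ that agree on $P_{\ell+1}(F)$. The Weyl element $w$ normalizes $M_{\ell+1}$ and, by the Bruhat decomposition, $\mathrm{O}_{V_{\ell+1}}=P_{\ell+1}\cup P_{\ell+1}wP_{\ell+1}$. Hence it suffices to show $\mathcal{F}=\mathcal{F}_{X_\ell}$ on $\mathcal{S}(X_\ell^\circ(F))$.

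\textbf{Key step (main obstacle).} The operator $T=\mathcal{F}_{X_\ell}^{-1}\circ\mathcal{F}$ on $\mathcal{S}(X_\ell^\circ(F))$ commutes with $\mathrm{O}_{V_\ell}(F)$, and it commutes with the torus $a\mapsto \mathrm{diag}(I,a,a^{-1})$ because $w$ inverts it in both representations. The unipotent radical $N_{\ell+1}$ is conjugated by $w$ to the opposite unipotent radical $\bar N_{\ell+1}$. I would try to exploit the relation $w\,u(v)\,w = \bar u(v)$ together with the formula $u(v)\bar u(v')u(v'')\in P_{\ell+1}wP_{\ell+1}$ for suitable $v$ (the $\mathrm{SL}_2$-type relation inside the root subgroup pair). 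This should show that $T$ intertwines $P_{\ell+1}$ on the dense subspace. Since $\mathcal{S}(X_\ell^\circ(F))$ is irreducible, Schur's lemma (admissibility holds since it is smooth under the compact open $\mathrm{O}_{V_\ell}(\OO)\ltimes N_{\ell+1}(\OO)$ with finite-dimensional isotypic pieces) gives $T=\lambda$. I would compute $\lambda$ on a test function, for instance a spherical-type element. By \cite[Lemma 3.8]{Getz:Quad}, $I(\one_{V_{\ell+1}(\OO)})=b_\ell$ is fixed by both transforms when $\psi$ is unramified, and $\widetilde{\mathcal{F}}_{X_\ell}(\tilde b_\ell)=\tilde b_\ell$; conjugating by the torus reduces general $\psi$ to unramified $\psi$. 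This forces $\lambda=1$.

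Alternatively, if the intertwining argument is awkward, I would compare the explicit integral formula for $\mathcal{F}_{X_\ell}$ from \S\ref{sec:BK} with a direct computation of $I\circ\widetilde{\mathcal{F}}_{X_\ell}$ on $W$, using $\mathcal{F}_\wedge$ and the Iwasawa decomposition. This is a direct, if lengthy, unfolding.

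Once $\sigma_\ell=\sigma'_\ell$, the image $I(\widetilde{\mathcal{S}}(X_\ell(F)))$ is an $\mathrm{O}_{V_{\ell+1}}(F)$-stable subspace containing $\mathcal{S}(X_\ell^\circ(F))$. It therefore contains $\mathcal{S}(X_\ell^\circ(F))+\mathcal{F}_{X_\ell}(\mathcal{S}(X_\ell^\circ(F)))=\mathcal{S}(X_\ell(F))$. Conversely, $\widetilde{\mathcal{S}}(X_\ell(F))$ is generated under $\tilde\sigma_\ell(w)$ by $W_\ell$-type elements: $\mathcal{S}(F^2-\{0\})+\mathcal{F}_\wedge\mathcal{S}(F^2-\{0\})=\mathcal{S}(F^2)$, since the codimension is $1$ and $\one_{\OO^2}$ is not in the first summand. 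This gives equality of images, and the Fourier diagram commutes.

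For the boundary diagram with $\ell\ge3$, consider the map $(\tilde c_\ell,I\circ\tilde d_\ell)$. By Lemma \ref{lem:easy}(i) and Corollary \ref{eq:dvanish} it kills $\ker I$ and $W'_\ell$, so it descends to an $\mathrm{O}_{V_\ell}(F)$-equivariant map on $\mathcal{S}(X_\ell(F))/\mathcal{S}(X_\ell^\circ(F))\cong\CC\oplus\mathcal{S}(X_{\ell-1}(F))$ (Theorem \ref{thm:asymplge3}). For the $c$-component, the expansion \eqref{eq:exp} shows that $\tilde c_\ell$ is the constant term in the germ of $I(f)$, which is precisely $c_\ell$. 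For the $d$-component, Lemma \ref{lem:ddescent} expresses $I\tilde d_\ell$ as a germ-limit. The resulting map agrees with $d_\ell$ up to a scalar, by uniqueness of the equivariant map from the irreducible quotient. The scalar is fixed by evaluating on $\one_{V_{\ell+1}(\OO)}$: the left side gives $b_{\ell-1}$ for unramified $\psi$, while $\tilde d_\ell$ involves a Fourier transform that produces $\varepsilon(\ell-2,\psi)$ under a change of $\psi$.
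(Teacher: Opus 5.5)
\textbf{The gap is your key step.} Extend $\sigma_\ell$ unitarily to $L^2(X_\ell(F))$ and write $\bar n=wnw^{-1}$. Then the identity $T\sigma_\ell(n)=\sigma_\ell(n)T$ on $\mathcal{S}(X_\ell^\circ(F))$ is equivalent to $\sigma'_\ell(\bar n)=\sigma_\ell(\bar n)$ on $\mathcal{F}(\mathcal{S}(X_\ell^\circ(F)))$. That is agreement of the two structures off $P_{\ell+1}(F)$, which is what you are trying to prove.

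The rank-one relation expressing $\bar u(v')$ through $P_{\ell+1}wP_{\ell+1}$ does not break this circle. It is a group identity, so it holds for both pairs $(\sigma_\ell,\mathcal{F}_{X_\ell})$ and $(\sigma'_\ell,\mathcal{F})$, and for any representation containing $\mathcal{S}(X_\ell^\circ(F))$ with the given $P_{\ell+1}(F)$-action. Such data need not determine the Weyl element: in the Kirillov model of $\mathrm{GL}_2(F)$, many non-isomorphic representations contain $C_c^\infty(F^\times)$ with identical mirabolic action but different actions of $w$. So a rigidity input specific to $\sigma_\ell$ is needed, and you do not supply one.

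Your normalization of $\lambda$ also hides a step. Since $b_\ell\notin\mathcal{S}(X_\ell^\circ(F))$, knowing $T=\lambda$ on $\mathcal{S}(X_\ell^\circ(F))$ says nothing about $b_\ell$ unless you already know $b_\ell\in\mathcal{S}(X_\ell^\circ(F))+\mathcal{F}(\mathcal{S}(X_\ell^\circ(F)))$. In passing, Schur cannot be justified by admissibility as you claim. The $\mathrm{O}_{V_\ell}(\OO)\ltimes N_{\ell+1}(\OO)$-invariants are spanned by the indicators of the shells $\varpi^jX_\ell(F)^1$, $j\ge 0$, which is infinite-dimensional. Schur would have to come from countable dimension instead.

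\textbf{The missing idea.} That membership is exactly the rigidity input the paper uses, and it makes control of the opposite unipotent radical unnecessary.
\begin{enumerate}
\item Set $\mathcal{S}:=\mathcal{S}(X_\ell^\circ(F))+\mathcal{F}(\mathcal{S}(X_\ell^\circ(F)))$. It is $\sigma'_\ell$-stable by the argument of \cite[Lemma 2.13]{GK:cone}, and irreducible by Lemma \ref{lem:Iact}.
\item Write $\one_{\OO^2}=f_1+\mathcal{F}_\wedge(f_2)$ with $f_i$ in the span of the $\one_{\varpi^j\OO^2}$ and $f_i(0)=0=I(\one_{V_\ell(\OO)}\otimes f_i)(0)$. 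This gives $b_\ell=I(\one_{V_{\ell+1}(\OO)})\in\mathcal{S}$.
\item $b_\ell$ is $\mathrm{O}_{V_{\ell+1}}(\OO)$-fixed for both $\sigma_\ell$ and $\sigma'_\ell$, and $\mathrm{O}_{V_{\ell+1}}(F)=P_{\ell+1}(F)\,\mathrm{O}_{V_{\ell+1}}(\OO)$. Hence $\sigma_\ell(pk)b_\ell=\sigma_\ell(p)b_\ell=\sigma'_\ell(p)b_\ell=\sigma'_\ell(pk)b_\ell$.
\item Both irreducible representations are generated by $b_\ell$, so $\mathcal{S}=\mathcal{S}(X_\ell(F))$ with $\sigma'_\ell=\sigma_\ell$.
\end{enumerate}

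\textbf{Smaller issues.}
\begin{enumerate}
\item Your boundary-map paragraph matches the paper in outline, but $\varepsilon(\ell-2,\psi)$ is asserted rather than derived. The paper obtains it by computing $\tilde c_{\ell-1}\circ\tilde d_\ell$ via \eqref{eq:Tate} and comparing with $C_\psi\, c_{\ell-1}\circ d_\ell\circ I$.
\item Your claim $I(W)\subset C^\infty_c(X_\ell^\circ(F))$ is false: the $\SL_2(\OO)$-average of $\one_{V_\ell(\OO)}-\one_{\varpi V_\ell(\OO)}$ does not vanish at $0$. The inclusion you actually need is immediate from $\mathcal{S}(X_\ell^\circ(F))\subset\mathcal{S}_{\mathrm{ES}}(X_\ell(F))=I(W_\ell)$.
\item For $\ell=2$ your converse inclusion requires $\mathcal{S}_{\mathrm{ES}}(X_2(F))\subset\mathcal{S}(X_2(F))$. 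Theorem \ref{thm:asymplge3} does not cover this case.
\end{enumerate}
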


\begin{proof}
For the first statement, we can assume $\psi$ is unramified. Let
\begin{align*}
     \mathcal{S}:=\mathcal{S}(X_\ell^\circ(F))+\mathcal{F}\left(\mathcal{S}(X_\ell^\circ(F))\right) <L^2(X_{\ell}(F)).
 \end{align*}
By the proof of \cite[Lemma 2.13]{GK:cone} $\mathcal{S}$ is a smooth $\mathrm{O}_{V_{\ell+1}}(F)$-subrepresentation of $\sigma'_\ell,$ and it is irreducible by Lemma \ref{lem:Iact}.
We claim $\mathcal{S}$ contains $b_\ell.$  Assume the claim for the moment.
Since $b_{\ell}=I(\one_{V_{\ell+1}(\OO)}),$ $b_\ell$ is spherical with respect to both $\sigma_\ell$ and $\sigma'_\ell.$    
Using the Iwasawa decomposition and the fact that 
unramified irreducible smooth representations are determined by spherical vectors, we deduce that $\mathcal{S}=\mathcal{S}(X_\ell(F))$ as $\mathrm{O}_{V_{\ell+1}}(F)$-representations. In particular, $\mathcal{S}_{\mathrm{ES}}(X_\ell(F))\le \mathcal{S}$ for $\ell\ge 3$ by Theorem \ref{thm:asymplge3}. This is also true for $\ell=2$ by looking at the asymptotics of $b_2$ (cf. \cite[Corollary 6.7]{Hsu:asymptotics}). It follows that 
\begin{align*}
\mathcal{S}=I(W_\ell)+\mathcal{F}(I(W_\ell))=I(W_\ell)+I(\widetilde{\mathcal{F}}_{X_{\ell}}(W_\ell))=I(\widetilde{\mathcal{S}}(X_{\ell}(F))).
\end{align*}
Therefore, the map $I$ is an $\mathrm{O}_{V_{\ell+1}}(F)$-equivariant surjection onto $\mathcal{S}(X_\ell(F))$. In particular, $\mathcal{F}=\mathcal{F}_{X_\ell}$. To see $b_\ell\in \mathcal{S}$, it suffices to show the existence of  functions $f_1,f_2\in \mathcal{S}(F^2)$ satisfying the following:
 \begin{align*}
     f_1+\mathcal{F}_\wedge(f_2)=\one_{\OO^2}, \textrm{ and }  f_i(0)=0, \,\,I(\one_{V_\ell(\OO)}\otimes f_i)(0)=0, \,\,\textrm{ for } i=1,2.
 \end{align*}
 Here $I(\one_{V_{\ell}(\OO)} \otimes f_i)(0)$ is defined because $f_i(0)=0.$
 One can find such a pair in the subspace $\langle \one_{\varpi^j\OO^2}: j \in \ZZ  \rangle\subset\mathcal{S}(F^2),$ for example. 
 
For the second statement, suppose $\ell\ge 3$ and $\psi$ is arbitrary. By Lemma \ref{lem:easy}(i) $\tilde{c}_\ell$ induces an $\mathrm{O}_{V_\ell}(F)$-equivariant linear functional 
\begin{align*}
    \tilde{c}_\ell':\widetilde{\mathcal{S}}(X_\ell(F))/I^{-1}(\mathcal{S}(X_\ell^\circ(F)))\tilde{\lto}\mathcal{S}(X_\ell(F))/\mathcal{S}(X_\ell^\circ(F))\lto \CC.
\end{align*}
Since $\mathcal{S}(X_\ell(F))/\mathcal{S}(X_\ell^\circ(F))\cong \CC\oplus \mathcal{S}(X_{\ell-1}(F))$ by Theorem \ref{thm:asymplge3}, and $\tilde{c}_\ell'$ and $c_\ell$ agree on $\CC$ by Lemma \ref{lem:easy}(i), we conclude $c_\ell\circ I=I\circ \tilde{c}_\ell$. 

We are left to show $\varepsilon(\ell-2,\psi)\, d_\ell\circ I=I\circ \tilde{d}_\ell$. By Theorem \ref{thm:asymplge3}
$d_{\ell}\circ I$ induces an $\mathrm{O}_{V_\ell}(F)$-equivariant isomorphism 
\begin{align*} 
 \iota:\widetilde{\mathcal{S}}(X_\ell(F))/W'_\ell   \tilde{\lto} \mathcal{S}(X_\ell(F))/\mathcal{S}_{\mathrm{ES}}(X_\ell(F))\tilde{\lto}\mathcal{S}(X_{\ell-1}(F)).
\end{align*}
Thus by Corollary \ref{eq:dvanish} we have an $\mathrm{O}_{V_\ell}(F)$-equivariant surjection
\begin{align*}
    J:\mathcal{S}(X_{\ell-1}(F))\xrightarrow{\iota^{-1}} \widetilde{\mathcal{S}}(X_\ell(F))/W'_\ell \xrightarrow{\,\,\tilde{d}_\ell\,\,}  \widetilde{\mathcal{S}}(X_{\ell-1}(F))/\tilde{d}_\ell(W'_\ell)\xrightarrow{\,\,I\,\,}  \mathcal{S}(X_{\ell-1}(F)).
\end{align*}
 As $\mathcal{S}(X_{\ell-1}(F))$ is an irreducible $\mathrm{O}_{V_\ell}(F)$-representation, we conclude that there is a nonzero constant $C_\psi$ such that $C_\psi d_\ell\circ I=I\circ \tilde{d}_\ell.$ When $\psi$ is unramified, $C_\psi=1$ because $J(b_{\ell-1})=b_{\ell-1}$ by our choice of $d_\ell$.

For general $\psi,$ let $f\in  \mathcal{S}(V_{\ell+1}(F))$ be invariant under $r_\ell(\SL_2(\mathcal{O})).$ Let us compute
\begin{align*}
    \tilde{c}_{\ell-1}\circ \tilde{d_\ell}(f)=c_{\ell-1}\circ I\circ \tilde{d}_\ell(f)=C_\psi c_{\ell-1}\circ d_\ell\circ I(f).
\end{align*}
By \eqref{eq:Tate}
\begin{align*}
    Z_{\ell-1}(\tilde{d_\ell}(f),s)&=\int_{F^\times} |t|^s\mathcal{F}_2(f)(\underline{0}_{2\ell-2},0,t,0,0) d^\times t\\
    &=\frac{\epsilon(1-s,\psi)\zeta(s)}{\zeta(1-s)}\int_{F^\times} |t|^{1-s} f(\underline{0}_{2\ell-2},0,t,0,0)d^\times t.
\end{align*}
For $\mathrm{Re}(s)<1,$ by Lebesgue's dominated convergence theorem  
\begin{align*}
    \int_{F^\times} |t|^{1-s} f(\underline{0}_{2\ell-2},0,t,0,0)d^\times t&=\lim_{|a|\to 0}\int_{F^\times} |t|^{1-s} f(\underline{0}_{2\ell-2},0,t,0,t^{-1}a)d^\times t\\
    &=\lim_{|a|\to 0} |a|^{1-s}\int_{F^\times} |t|^{1-s} f(\underline{0}_{2\ell-2},0,ta,0,t^{-1})d^\times t.
\end{align*}
Therefore,
\begin{align*}
    \tilde{c}_{\ell-1}\circ \tilde{d_\ell}(f)=\varepsilon(\ell-2,\psi)\lim_{|a|\to 0} |a|^{\ell-2}I(f)(\underline{0}_{2\ell-2},0,a)\begin{cases}
        \frac{\zeta(3-\ell)}{\zeta(\ell-2)}  & \textrm{if } \ell>3,\\
        \zeta(1)^{-1} & \textrm{if }\ell=3.
    \end{cases}
\end{align*}
By comparing terms, we have $C_\psi=\varepsilon(\ell-2,\psi).$ 
\end{proof}

For $\ell=1$, $X_1=\A^1\cup \A^1$ is not normal, and hence not spherical. In view of the results above, we define
\begin{align*}
    \mathcal{S}(X_{1}(F)):=I(\widetilde{\mathcal{S}}(X_1(F))).
\end{align*} 
  Then the map $I:\widetilde{\mathcal{S}}(X_1(F))\longrightarrow \mathcal{S}(X_{1}(F))$ is by definition a $P_{2}(F)$-equivariant surjection.

Let $\CC_{1}$ be the representation $|\cdot|$ of $F^\times$.

\begin{prop}\label{prop:case1} 
  We have an exact sequence of smooth $\mathrm{O}_{V_1}(F)$-representations
    \begin{align*}
        0\longrightarrow \mathcal{S}(X_{1}^\circ(F))\longrightarrow \mathcal{S}(X_{1}(F))\xrightarrow{(c_1,(d_1', d_1))}  \mathrm{triv}\oplus\CC^2\longrightarrow 0,
    \end{align*}
    where $\CC^2$ is the irreducible representation $\mathrm{Ind}_{\mathrm{SO}_{V_1}}^{\mathrm{O}_{V_1}} \CC_1$ given by
    \begin{align*}
        \begin{psmatrix}
        a &\\
          & a^{-1}
        \end{psmatrix}.(v,w)=(|a|v,|a|^{-1}w), \quad \begin{psmatrix}
         & 1\\
        1  & 
        \end{psmatrix}.(v,w)=(w,v).
    \end{align*}
    Here $(d_1',d_1), c_1$ are normalized so that
    \begin{align*}
        (d_1',d_1)(b_1)=(1,1), \quad c_1(b_1)=1=\frac{b_1(0)}{\zeta(1)}.
    \end{align*}
    Furthermore, we have commutative diagrams of $\mathrm{SO}_{V_1}(F)$-modules
    \begin{align*}
    \begin{minipage}{0.48\textwidth}
    \vspace{0pt}
    \[
    \begin{CD}
    \widetilde{\mathcal{S}}(X_1(F)) @>{\quad\quad(\tilde{c}_1,\tilde{d}_{1})\quad\quad}>> \CC\oplus \widetilde{\mathcal{S}}(X_0(F))\\
    @V{I}VV @V{(\mathrm{id},I)}VV
    \\
    \mathcal{S}(X_{1}(F)) @>{(c_1,\varepsilon(-1,\psi)\zeta(2)d_{1})}>>  \CC\oplus \CC_{1}
    \end{CD}
\]
\end{minipage}
\hfill
\begin{minipage}{0.48\textwidth}
\vspace{6pt}
\[
    \begin{CD}
    \widetilde{\mathcal{S}}(X_1(F)) @>{\quad\quad \tilde{d}_{1}\quad\quad}>> \widetilde{\mathcal{S}}(X_0(F))\\
    @V{I}VV @V{\mathrm{ev}}VV
    \\
    \mathcal{S}(X_{1}(F)) @>{\quad\quad \varepsilon(-1/2,\psi)d_{1}'\quad\quad}>>  \CC_{1}.
    \end{CD}
    \]
\end{minipage}
    \end{align*}
    
    \medskip
    \noindent Here $\mathrm{ev}$ is the evaluation map of functions on $F^2$ at $(0,0).$
\end{prop}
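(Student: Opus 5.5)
We work with an $r_1(\SL_2(\OO))$-invariant representative $f\in\mathcal{S}(V_2(F))$ of a class in $\widetilde{\mathcal{S}}(X_1(F))$, first with $\psi$ unramified, and compute everything through the Iwasawa formula
\begin{align*}
I(f)(x)=\int_{F^\times}|t|^{1}f(t^{-1}x,0,t)\,d^\times t
\end{align*}
as in the proof of Lemma \ref{lem:easy}. Write $x=(x_1,0)$ or $(0,x_2)$ for the two branches of $X_1^\circ$. Choose $A$ as in Lemma \ref{lem:easy} and split the sum over $j$ as in \eqref{eq:exp}. On the branch $x=(0,x_2)$ with $|x_2|$ small, the terms with $j$ large have the shape $q^{-j}f(0,\varpi^{n-j}x_2,0,0)$. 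This gives an expansion
\begin{align*}
I(f)(0,x_2)=\text{const}+|x_2|\cdot\phi(f)+(\text{Schwartz near }0),
\end{align*}
and similarly on the other branch. Here the constant term is $I(f)(0)=\zeta(1)\tilde{c}_1(f)$ by Lemma \ref{lem:easy}(iii), and the coefficients of $|x_1|$ and $|x_2|$ are given by one-dimensional Tate integrals of $f(x_1,0,0,0)$ and $f(0,x_2,0,0)$.

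We then define $c_1(\phi):=\zeta(1)^{-1}\phi(0)$, and let $d_1,d_1'$ be the coefficients of $|x_2|$ (respectively $|x_1|$) in the germ of $\phi\in\mathcal{S}(X_1(F))$ at $0$, normalized by their values on $b_1$. The formula \eqref{eq:basicagree} gives $b_1(0)=\zeta(1)$, and on each branch $b_1$ has germ $\zeta(1)-\text{const}\cdot|x_i|$; rescaling fixes $(d_1',d_1)(b_1)=(1,1)$. These maps depend only on $\phi$ because they are read off from its asymptotics, and they kill $\mathcal{S}(X_1^\circ(F))$. Equivariance follows from \eqref{eq:Pequal} for the $\mathrm{SO}_{V_1}$ part, $x\mapsto (ax_1,a^{-1}x_2)$, which rescales the coefficients by $|a|^{\pm1}$. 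The swap interchanges the branches, so $d_1'=d_1\circ\begin{psmatrix}0&1\\1&0\end{psmatrix}$ and the target is the induced representation. Exactness in the middle holds because a function with zero constant term and zero $|x_i|$ coefficients is, by the explicit expansion, locally constant near $0$ on each branch with value $0$, hence lies in $\mathcal{S}(X_1^\circ(F))$. Surjectivity needs three functions with independent germs. We take $b_1$ and its images under $\mathrm{diag}(a,a^{-1})$ for suitable $a$, together with $I(f)$ for $f=\one_{\OO}\otimes\one_{\varpi^{-1}\OO}\otimes\one_{\OO^2}$ or a similar choice that breaks the symmetry. A $3\times3$ determinant check then shows that $(c_1,d_1',d_1)$ is onto.

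For the diagrams, $\tilde{c}_1(f)=\zeta(1)^{-1}I(f)(0)=c_1(I(f))$ is immediate from Lemma \ref{lem:easy}(iii). For $\tilde{d}_1$ the plan follows the end of the proof of Proposition \ref{prop:min:rep}. We have $\tilde{d}_1(f)=\mathcal{F}_2(f(\cdot,\cdot,0,0))\in\mathcal{S}(F^2)$, so $\mathrm{ev}\circ\tilde{d}_1(f)=\int_F f(0,y,0,0)\,dy$. The $|x_2|$-coefficient of $I(f)$ is a Tate integral $Z(f(0,\cdot,0,0),s)$ evaluated at a special point. We apply \eqref{eq:Tate} to convert it to $Z(\widehat{\cdot},1-s)$, which at the relevant point becomes the integral $\int f(0,y,0,0)dy$ (the $\hat g(0)$ value). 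This produces the factor $\varepsilon(\cdot,\psi)$ together with ratios of local zeta values. Matching these with the normalization $d_1'(b_1)=1$ gives the constants $\varepsilon(-1/2,\psi)$ and $\varepsilon(-1,\psi)\zeta(2)$ appearing in the two diagrams. We expect this bookkeeping to be the main obstacle, specifically identifying precisely which branch coefficient the first coordinate of $\tilde{d}_1$ sees: $\tilde{d}_1$ lands in $\widetilde{\mathcal{S}}(X_0)$, and $I$ on it must be interpreted via $\mathcal{S}(F^2)$ coinvariants. For this step we would first compute everything on the $q$-expansion \eqref{eq:exp} for indicator functions $\one_{\varpi^i\OO}$ and check the constants on $b_1$.
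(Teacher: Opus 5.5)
Your treatment of the exact sequence follows the paper: the germs come from \eqref{eq:exp}, and exactness follows from the exact two-term germ. For surjectivity, the $\mathrm{diag}(a,a^{-1})$-translates of $b_1$ alone, with $(c_1,d_1',d_1)=(1,|a|,|a|^{-1})$, already span $\CC^3$. Also $c_1\circ I=\tilde c_1$ is immediate.

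The gap is in the $\tilde d_1$ part. Put $g=f(0,\cdot,0,0)$. By \eqref{eq:2comp}, the $|x_2|$-coefficient of $I(f)$ is the regularized value $Z(g,-1)$, so $d_1(I(f))=Z(g,-1)/\zeta(-1)$. Then \eqref{eq:Tate} at $s=-1$ gives $Z(\hat g,2)/\zeta(2)=\varepsilon(-1,\psi)Z(g,-1)/\zeta(-1)$. It does \emph{not} produce $\hat g(0)=\int_F f(0,y,0,0)\,dy$: that value equals $\zeta(1)^{-1}Z(g,1)$ and belongs to the point $s=1$.

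What $Z(\hat g,2)$ actually computes is $I(\tilde d_1(f))$. For $K$-invariant $\phi\in\mathcal S(F^2)$ one has $I(\phi)=\int_{F^\times}|t|^2\phi(0,t)\,d^\times t=\zeta(2)\varepsilon(0,\psi)\int_{F^2}\phi$, and $\tilde d_1(f)(0,t)=\hat g(t)$. So once $I$ on $\widetilde{\mathcal S}(X_0(F))$ is made explicit, your Tate computation proves the \emph{first} diagram directly. This is a fine alternative to the paper's route (Corollary~\ref{eq:dvanish}, the character $|a|^{-1}$, normalization at $b_1$, rescaling $\psi$), and it is how Proposition~\ref{prop:case1:arch} argues.

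It gives nothing for $\mathrm{ev}\circ\tilde d_1$, which is not a multiple of $d_1\circ I$ at all. Take $\psi$ unramified and $f=\tilde\sigma_1(\mathrm{diag}(\varpi^{-m},\varpi^{m}))\one_{V_2(\OO)}$. Then $I(f)(x)=b_1(\varpi^{-m}x_1,\varpi^{m}x_2)$, so $(d_1',d_1)(I(f))=(q^{m},q^{-m})$. But $\mathrm{ev}\circ\tilde d_1(f)=\int_F\one_{\varpi^{-m}\OO}(y)\,dy=q^{m}$.

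This example also shows why checking constants on $b_1$ cannot answer your ``which branch'' question: $b_1$ is swap-symmetric, with $d_1(b_1)=d_1'(b_1)$. The real issue is that $\int_F f(0,y,0,0)\,dy$ is an integral along the second axis of $V_1$. Yet it must be matched with the germ on the \emph{first} branch, which by \eqref{eq:2comp} only sees $f(\cdot,0,0,0)$. No one-variable Tate manipulation relates the two axes; only the $\SL_2$-coinvariance does. The same cross-axis phenomenon is already hidden in the first diagram: $\int_{F^2}\tilde d_1(f)=\int_F f(u_1,0,0,0)\,du_1$ sees the first axis, and only rotation invariance of $\tilde d_1(f)$ turns it into $Z(\hat g,2)$ on the second.

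The missing step is the paper's swap argument. Let $w=\begin{psmatrix}0&1\\1&0\end{psmatrix}\in\mathrm{O}_{V_1}(F)$. Fourier inversion gives $\int_F f(0,y,0,0)\,dy=\int_{F^2}\tilde d_1(\tilde\sigma_1(w)f)=(\zeta(2)\varepsilon(0,\psi))^{-1}I(\tilde d_1(\tilde\sigma_1(w)f))$. Then the first diagram, together with $d_1(I(\tilde\sigma_1(w)f))=d_1'(I(f))$, yields $\mathrm{ev}\circ\tilde d_1=\frac{\varepsilon(-1,\psi)}{\varepsilon(0,\psi)}\,d_1'\circ I=\varepsilon(-1/2,\psi)\,d_1'\circ I$. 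So the second diagram has to be deduced from the first, which also shows that $\mathrm{ev}\circ\tilde d_1$ descends through $I$. It cannot come from a separate Tate computation on a single axis.
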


\begin{proof}
For the first assertion, we may assume $\psi$ is unramified. Assume $f\in \mathcal{S}(V_2(F))$ is $r_1(\SL_2(\OO))$-invariant. Let $x\in X_1(F)^1.$ Using the notation in the proof of Lemma \ref{lem:easy}, for $n$ large by \eqref{eq:exp}
\begin{align}\label{eq:2comp}
\begin{split}
    I(f)(\varpi^n x)=&I(f)(0)-q^{-n}\Big(\frac{q^{A-1}}{1-q^{-1}}f(0)-q^{A-1}\sum_{j=0}^{2A-1} q^{-j}f(\varpi^{A-j-1}x,0,0)\Big).
\end{split}
\end{align}
Since $f$ is $r_1(\SL_2(\OO))$-invariant, by \eqref{eq:2comp} we have
\begin{align*}
    I(f)(\varpi^nx)=\begin{cases}
    I(f)(\varpi^n(1,0)) &\textrm{ if } x=(a,0), a\in \OO^\times,\\
    I(f)(\varpi^n(0,1))& \textrm{ if } x=(0,a), a\in \OO^\times.
    \end{cases}
\end{align*}
Define
\begin{align*}
    (d_1',d_1)(I(f))&:=\frac{q^n}{\zeta(-1)}\big(I(f)(\varpi^n(1,0))-I(f)(0), I(f)(\varpi^n(0,1))-I(f)(0)\big).
\end{align*}
This is well-defined by \eqref{eq:2comp}. We define $c_1(I(f)):=\frac{I(f)(0)}{\zeta(1)}$ so $c_1(I(f))=\tilde{c}_1(f)$ by Lemma \ref{lem:easy}(iii). The first assertion follows by definition.

For the second assertion, by Corollary \ref{eq:dvanish} we have an $\mathrm{SO}_{V_1}(F)$-equivariant surjection
\begin{align*}
    J:\CC^2 \tilde{\lto} \mathcal{S}(X_{1}(F))/\mathcal{S}_{\mathrm{ES}}(X_{1}(F))\tilde{\lto} \widetilde{\mathcal{S}}(X_1(F))/W_1'\xrightarrow{\tilde{d}_1} \widetilde{\mathcal{S}}(X_0(F))/\tilde{d}_1(W_1')\xrightarrow{\,\,I\,\,} \CC.
\end{align*}
Since $\tilde{d}_1\circ \tilde{\sigma}_1\begin{psmatrix}
a & \\
 & a^{-1}
\end{psmatrix}=|a|^{-1}\tilde{d}_1,$ we have $I\circ \tilde{d}_1=C_\psi d_1\circ I$ for some nonzero constant $C_\psi$. When $\psi$ is unramified, we have $\zeta(2)d_1(b_1)=\zeta(2)=I(\tilde{d}_1(\one_{V_2(\OO)})),$ so $C_\psi=\zeta(2)$. By Lemma \ref{lem:ddescent}, for $f\in \widetilde{\mathcal{S}}(X_1(F))$
\begin{align*}
    I(\tilde{d}_1(f))=I(\tilde{d}_1(f))(0)&=\lim_{|a|\to 0}\int_{F} |a|^{-1}I(f)(0,ay)\psi(y)dy.
\end{align*}
Let $z\in F^\times$ such that $\psi_u(y):=\psi(zy)$ is unramified. Let $d_u y$ be the self-dual Haar measure on $F$ with respect to $\psi_u$. By changing variables $y\mapsto zy$ and $a\mapsto az^{-1}$, we have
\begin{align*}
    I(\tilde{d}_1(f))=|z|^{3/2}\lim_{|a|\to 0}\int_{F} |a|^{-1}I(f)(0,ay)\psi_u(y)d_u y.
\end{align*}
Note that $|z|^{3/2}=\varepsilon(-1,\psi)$. By comparing terms $C_\psi=\varepsilon(-1,\psi)\zeta(2).$

Finally, for $f\in \widetilde{\mathcal{S}}(X_1(F))$ by Fourier inversion
\begin{align*}
    \tilde{d}_1(f)(0)&=\int_{F} f(0,y,0,0)dy\\
    &=\int_{F^2}\left(\int_{F} f(t,y,0,0)\psi(xt)\right)dxdy\\
    &=\int_{F^2}\left(\int_{F} \left(\tilde{\sigma}_1\begin{psmatrix}
        0 & 1\\
         1 & 0
    \end{psmatrix}f\right)(y,t,0,0)\psi(xt)\right)dxdy\\
    &=\int_{F^2}\tilde{d}_1\circ \tilde{\sigma}_1\begin{psmatrix}
        0 & 1\\
         1 & 0
    \end{psmatrix}(f)(y,x)dxdy
\end{align*}
By our choice of Haar measures, this is
\begin{align*}
    &\frac{1}{\zeta(2)\varepsilon(0,\psi)}\int_{N(F)\backslash \SL_2(F)}\tilde{d}_1\circ \tilde{\sigma}_1\begin{psmatrix}
        0 & 1\\
         1 & 0
    \end{psmatrix}(f)((0,1)g)d\dot{g}\\
    &=\frac{1}{\zeta(2)\varepsilon(0,\psi)}I\circ\tilde{d}_1\circ \tilde{\sigma}_1\begin{psmatrix}
        0 & 1\\
         1 & 0
    \end{psmatrix}(f)\\
    &=\frac{\varepsilon(-1,\psi)}{\varepsilon(0,\psi)}d_{1}\circ I\circ \tilde{\sigma}_1\begin{psmatrix}
        0 & 1\\
         1 & 0
    \end{psmatrix}(f)\\
    &=\varepsilon(-1/2,\psi)d_{1}'\circ I(f).
\end{align*}

\end{proof}

% From the proposition above and an analogous proof of Lemma \ref{lem:Iact} for $\ell=1,$ we conclude 
% \begin{cor}\label{cor:1}
%     We have
%     $$\mathcal{S}(X_{1}(F))=\mathcal{S}_{\mathrm{ES}}(X_{1}(F))+\mathcal{F}_{X_{1}}(\mathcal{S}_{\mathrm{ES}}(X_{1}(F))).$$
%     In particular, $\mathcal{S}_{\mathrm{ES}}(X_{1}(F))$ is of length $2$ as a $P_2(F)$-representation, and  $\mathcal{S}(X_{1}(F))$ is an $\mathrm{O}_{V_2}(F)$-representation of length at most $2$. \qed 
% \end{cor}

The scheme $X_2^\circ$ can be realized as the scheme consisting of $2$ by $2$ matrices of rank $1$ by
\begin{align*}
   (v_1,v_2,v_3,v_4)\longmapsto \begin{pmatrix}
        v_1 & v_3\\
        -v_4 & v_2
    \end{pmatrix}.
\end{align*}
Therefore, we can view $X_{2}$ as an affine $\SL_2^2$-spherical variety under the degree $2$ isogeny $\SL_2^2\to \mathrm{SO}_{V_2}.$ We record the following well-known statement. 

\begin{lem}\label{lem:sl2} Let $1_s$ denote the quasicharacter on $T(F)$ such that $1_{s}\left(\begin{psmatrix}
a &\\
 & a^{-1}
\end{psmatrix}\right)=|a|^{s}$. Let $\mathrm{Ind}_{B}^{\SL_2}(1_{s})$ be the normalized induction, and
\begin{align*}
    M_{w_0}(s):\mathrm{Ind}_{B}^{\SL_2}(1_{s})&\to \mathrm{Ind}_{B}^{\SL_2}(1_{-s})\\
    f&\mapsto \left(g\mapsto \int_{N(F)} f(\begin{psmatrix}
 & 1\\
-1 & 
\end{psmatrix}ug)\right)du 
\end{align*}
be the (unnormalized) intertwining operator. On $\mathrm{Re}(s)<0$, the operator $M_{w_0}(s)$ is holomorphic and is an isomorphism except at $1_{-1}$ where it has a nontrivial kernel. The module $\mathrm{Ind}_{B}^{\SL_2}(1_{-1})$ is of length $2.$ Its maximal semisimple subrepresentation (resp. quotient) is $\CC$ (resp. the Steinberg representation $\mathrm{St}$), and $\CC$ is the kernel of $M_{w_0}(1_{-1})$.
\qed
\end{lem}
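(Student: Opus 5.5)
The plan is to prove all assertions of Lemma \ref{lem:sl2} from the standard structure theory of $\mathrm{Ind}_{B}^{\SL_2}(1_s)$: the Bruhat decomposition together with the Jacquet module in the nonarchimedean case, and $K$-types in the archimedean case.

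\textbf{Step 1: convergence and holomorphy.} I first fix the normalization implicit in the statement, namely the one in which the trivial representation sits inside $\mathrm{Ind}_{B}^{\SL_2}(1_{-1})$; the half-plane of convergence is then $\mathrm{Re}(s)<0$. Using the Iwasawa decomposition
$$\begin{psmatrix} & 1\\ -1 & \end{psmatrix}\begin{psmatrix} 1 & u\\ & 1\end{psmatrix}\in N(F)\begin{psmatrix} t & \\ & t^{-1}\end{psmatrix}K \quad\text{with } |t|\asymp \max(1,|u|)^{-1},$$
a section $f$ satisfies $|f(w_0ug)|\ll \max(1,|u|)^{\mathrm{Re}(s)-1}$ uniformly for $g$ in the compact set $K$. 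The integral over $N(F)$ therefore converges absolutely and locally uniformly in $s$ on $\mathrm{Re}(s)<0$, for flat sections in the compact picture. This gives holomorphy. Equivariance follows by the usual change of variables.

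\textbf{Step 2: reducibility points.} In the nonarchimedean case, the Jacquet module of $\mathrm{Ind}_{B}^{\SL_2}(1_s)$ has the exponents $1_s$ and $1_{-s}$ (Bernstein--Zelevinsky geometric lemma). The induced representation is reducible exactly when it has a one-dimensional subquotient. Such a subquotient forces $1_{\pm s}$ to equal the modulus character, i.e.\ $|a|^{s}=|a|^{\pm1}$. On $\mathrm{Re}(s)<0$ this means $1_s=1_{-1}$. Here I read ``at $1_{-1}$'' as a statement about characters, which takes care of the periodicity of $q^{-s}$. (Quadratic-character reducibility only occurs on $\mathrm{Re}(s)=0$, so it does not arise.)

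In the archimedean case I would instead use the explicit action of the Lie algebra on the $K$-types $e^{in\theta}$ (for $F=\RR$) or on $\mathrm{SU}_2$-types (for $F=\CC$). The raising and lowering operators act by scalars that are linear in $s$ and $n$. A scalar vanishes only at $s=\pm1$ (shifted by integers of the appropriate parity), and on $\mathrm{Re}(s)<0$ the only such point relevant to the statement is the one where the constant function is a sub.

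For $1_s\neq 1_{-1}$ both $\mathrm{Ind}(1_{s})$ and $\mathrm{Ind}(1_{-s})$ are then irreducible. Hence $M_{w_0}(s)$ is an isomorphism as soon as it is nonzero. Nonvanishing I would check on a section supported in the big cell $Bw_0N$ (nonarchimedean case), or by computing $M_{w_0}(s)$ on the spherical vector, where it acts by a Tate-type ratio $\zeta(-s)/\zeta(1-s)$ up to convention. This ratio is holomorphic and nonzero on $\mathrm{Re}(s)<0$ away from the exceptional point.

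\textbf{Step 3: the point $1_{-1}$.} Here the constant functions form a subrepresentation $\CC\subset \mathrm{Ind}_{B}^{\SL_2}(1_{-1})$. By Step 2 the length is $2$, and the quotient is by definition $\mathrm{St}$. The contragredient (equivalently the ``dual'') representation $\mathrm{Ind}_{B}^{\SL_2}(1_{1})$ has $\mathrm{St}$ as its unique irreducible sub and $\CC$ as its quotient. It therefore has no trivial sub, so $M_{w_0}$ must kill $\CC$.

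One also has to see that $M_{w_0}$ is nonzero, which I would check on a big-cell-supported section as in Step 2. Its kernel is then a proper nonzero subrepresentation, hence equal to $\CC$. Finally, the extension is nonsplit, because $M_{w_0}$ is a nonzero intertwiner and the image of $\mathrm{Ind}(1_{-1})$ in $\mathrm{Ind}(1_1)$ is $\mathrm{St}$, the sole irreducible sub. So the maximal semisimple subrepresentation is $\CC$ and the maximal semisimple quotient is $\mathrm{St}$.

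The main obstacle I expect is keeping the normalization conventions consistent: the sign of $s$ relative to the half-plane of convergence, and the precise form of the spherical computation. The archimedean reducibility analysis is the most computational step, but it is classical.
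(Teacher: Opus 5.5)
The gap is in Step~1, and everything you later say at $s=-1$ depends on it. In the normalization you fix, constants lie in $\mathrm{Ind}_{B}^{\SL_2}(1_{-1})$, i.e.\ $f(\mathrm{diag}(t,t^{-1})ng)=|t|^{s+1}f(g)$. Your own Iwasawa estimate $|t|\asymp\max(1,|u|)^{-1}$ then gives $|f(w_0ug)|\asymp\max(1,|u|)^{-\mathrm{Re}(s)-1}$, not $\max(1,|u|)^{\mathrm{Re}(s)-1}$. So $\int_{N(F)}f(w_0ug)\,du$ converges absolutely exactly on $\mathrm{Re}(s)>0$.

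No choice of convention rescues Step~1. At $s=-1$ the positive section $f\equiv 1$ would give $\int_{N(F)}1\,du=\infty$. An absolutely convergent integral of a positive function can never vanish, yet the lemma asserts $M_{w_0}(1_{-1})(1)=0$. So on $\mathrm{Re}(s)<0$ the operator is the meromorphic continuation. The claim ``holomorphic on $\mathrm{Re}(s)<0$'' is therefore a genuine assertion your proposal does not prove, in particular at $s=-1$, which is exactly how the lemma is used in Proposition~\ref{prop:case2}. The same sign slip appears in your spherical ratio: $\zeta(-s)/\zeta(1-s)$ equals $\zeta(1)/\zeta(2)\neq 0$ at $s=-1$, which contradicts the kernel statement you derive in Step~3.

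The repair in the nonarchimedean case is short; this is the only case of the lemma, which sits in \S\ref{sec:agree}.
\begin{itemize}
\item A flat section decomposes as $f_s=f_s(1)f_s^\circ+\phi_s$, where $\phi_s$ vanishes on an $s$-independent neighborhood of the identity coset, i.e.\ is supported in the big cell $Bw_0N$.
\item For such $\phi_s$ the integrand $u\mapsto\phi_s(w_0ug)$ has compact support, so $M_{w_0}(s)\phi_s$ is a Laurent polynomial in $q^{-s}$.
\item For the big-cell section with $\phi_s(w_0u)=\varphi(u)$, $\varphi\in C_c^\infty(N(F))$, one gets $M_{w_0}(s)\phi_s(1)=\int\varphi\,du$. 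Hence $M_{w_0}(s)\neq 0$ for every $s$.
\item On the spherical vector a direct computation gives $M_{w_0}(s)f_s^\circ=\mathrm{vol}(\OO)\frac{1-q^{-1-s}}{1-q^{-s}}f_{-s}^\circ$. Its poles are only at $q^{-s}=1$ and its zeros only at $q^{-s}=q$, i.e.\ $1_s=1_{-1}$.
\end{itemize}
This gives holomorphy on $\mathrm{Re}(s)<0$ and directly $M_{w_0}(1_{-1})(1)=0$. Your irreducibility argument and Step~3 then go through.

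Two further corrections.
\begin{itemize}
\item Nonsplitness does not follow from the image of $M_{w_0}(1_{-1})$ being $\mathrm{St}$, since a split $\CC\oplus\mathrm{St}$ would have the same image. Deduce it instead from the fact that $\mathrm{Ind}_{B}^{\SL_2}(1_{-1})^\vee\cong\mathrm{Ind}_B^{\SL_2}(1_1)$ has no trivial subrepresentation, which you already note.
\item Drop the archimedean part of Step~2, because the statement is false there. For $F=\RR$ the spherical eigenvalue $\sqrt{\pi}\,\Gamma(s/2)/\Gamma((s+1)/2)$ has poles at $s=-2,-4,\ldots$, and $\mathrm{Ind}_B^{\SL_2}(1_{-3})$ contains the $3$-dimensional representation. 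For $F=\CC$ the spherical eigenvalue is a nonzero multiple of $1/s$, so it does not vanish at $s=-1$. Since $\mathrm{Ind}_B^{\SL_2}(1_1)$ has no invariant vectors, this forces a pole of $M_{w_0}(s)$ at $s=-1$ itself.
\end{itemize}
Only the erroneous convergence claim made the archimedean case look uniform with the nonarchimedean one.
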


 It follows that $\mathrm{Ind}_{B}^{\SL_2}(1_{-1})\otimes \mathrm{Ind}_{B}^{\SL_2}(1_{-1})$ is an $\SL_2^2(F)$-representation of length $4$ whose composition factors are the external tensor products $\CC\otimes \CC, \mathrm{St}\otimes \CC, \CC\otimes \mathrm{St}, \mathrm{St}\otimes \mathrm{St}$. 
 By conjugation we can assume $B^2$ is mapped onto $P_2\cap \mathrm{SO}_{V_2}$ under the isogeny $\mathrm{SL}_2^2 \to \mathrm{SO}_{V_2}.$  All of the composition factors above descend to representations of  $\mathrm{SO}_{V_2}(F)$.  The direct sum $(\mathrm{St}\otimes \CC) \oplus (\CC\otimes \mathrm{St})$  
may be identified with the $\mathrm{O}_{V_2}(F)$-representation 
\begin{align}
\sigma:=\mathrm{Ind}_{\mathrm{SO}_{V_2}}^{\mathrm{O}_{V_2}}(\mathrm{St} \otimes \CC).
\end{align}
Concretely, 
\begin{align*}
        \begin{psmatrix}
          & 1 & &\\
         1&  &  &\\
         & & 1 & \\
         & &  & 1
        \end{psmatrix}\in \mathrm{O}_{V_2}(F)
    \end{align*}
acts by permuting the two factors of $(\mathrm{St}\otimes \CC) \oplus (\CC\otimes \mathrm{St}).$

\begin{prop}\label{prop:case2}
\begin{enumerate}
\item[(i)]
We have a natural exact sequence of smooth $\mathrm{O}_{V_2}(F)$-representations
    \begin{align*}
        &0\longrightarrow \mathcal{S}_{\mathrm{ES}}(X_{2}(F))\longrightarrow \mathcal{S}(X_{2}(F))\xrightarrow{\,\,d_2\,\,} \pi\longrightarrow 0,
    \end{align*}
    where $\pi$ is of length $2$ and admits a nonsplit exact sequence of $\mathrm{O}_{V_2}(F)$-representations
    \begin{align*}
        0\longrightarrow \sigma \longrightarrow\pi \longrightarrow \CC\longrightarrow 0.
    \end{align*}
    Here the map $\pi\to \CC$ is given by taking the highest order term in the germ expansion.
      Let 
    \begin{align*}
      c_2:  \mathcal{S}(X_{2}(F))\xrightarrow{\,\,d_2\,\,} \pi\lto \CC.
    \end{align*}
    Then $c_2(I(f))=\tilde{c}_2(f)$. 
    
   \item[(ii)] We have an isomorphism $J:\pi\cong \mathcal{S}(X_{1}(F))$ of $P_{2}(F)$-representations such that \sloppy $J(d_2(b_2))=b_1$. Under this identification we have a commutative diagram of  $P_{2}(F)$-representations: $$
    \begin{CD}
    \widetilde{\mathcal{S}}(X_2(F)) @>{\tilde{d}_{2}}>> \widetilde{\mathcal{S}}(X_{1}(F))\\
    @V{I}VV @V{I}VV
    \\
    \mathcal{S}(X_{2}(F)) @>{\varepsilon(0,\psi)d_{2}}>>  \mathcal{S}(X_{1}(F)).
    \end{CD}
    $$
    \end{enumerate}
\end{prop}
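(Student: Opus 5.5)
We prove both parts by working with $I(f)$ for $f\in\mathcal{S}(V_3(F))$, using Proposition \ref{prop:min:rep}, which gives $\mathcal{S}(X_2(F))=I(\widetilde{\mathcal{S}}(X_2(F)))$. We may take $f$ to be $r_2(\SL_2(\OO))$-invariant. Then \eqref{eq:forell=2} gives, for $x\in X_2(F)^1$ and $n\gg0$,
\[
I(f)(\varpi^n x)=n\tilde{c}_2(f)+\tilde{a}_2(f)(x).
\]

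\textbf{Step 1: the germ map and $c_2$.} Define $d_2$ as the germ at the origin: the class of $I(f)$ modulo functions that are eventually equal to functions on $X_2(F)$ near $0$. These are exactly the restrictions $\mathcal{S}_{\mathrm{ES}}(X_2(F))$, i.e. the functions that are constant near $0$. By \eqref{eq:forell=2}, the germ space is spanned by the linear term $n\mapsto n$ and by $C^\infty(\mathbb{P}X_2(F))$ modulo constants, so
\[
\pi=\bigl(\CC\cdot n\oplus C^\infty(\mathbb{P}X_2(F))\bigr)/\CC .
\]
The projection onto the coefficient of $n$ is $c_2$, and $c_2(I(f))=\tilde{c}_2(f)$ holds by construction, using Lemma \ref{lem:easy}(ii). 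By Lemma \ref{lem:easy}(ii) this is well defined on $I(f)$.

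\textbf{Step 2: the structure of $\pi$.}
\begin{enumerate}
\item[(a)] The $\mathrm{O}_{V_2}(F)$-action on germs is by right translation. Under $\mathbb{P}X_2\cong\mathbb{P}^1\times\mathbb{P}^1$, the space $C^\infty(\mathbb{P}^1\times\mathbb{P}^1)/\CC$ has, as a quotient, $\bigl(C^\infty(\mathbb{P}^1)/\CC\otimes\CC\bigr)\oplus\bigl(\CC\otimes C^\infty(\mathbb{P}^1)/\CC\bigr)\cong\sigma$, by Lemma \ref{lem:sl2}.
\item[(b)] We must show the $\mathrm{St}\otimes\mathrm{St}$ part never occurs in $\tilde{a}_2(f)$. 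From the explicit formula in \eqref{eq:forell=2}, $\tilde{a}_2(f)(x)$ depends on $x$ only through $\sum_j f(\varpi^{-j}x,0,0)$. As $f$ ranges, these span the image of the Radon-type transform $\mathcal{S}(V_2(F))\to C^\infty(\mathbb{P}X_2)$. This image should be the sum of pullbacks from the two $\mathbb{P}^1$ factors; we would check this by computing on functions $\one_{x+\varpi^m V_2(\OO)}$.
\item[(c)] For nonsplitness, we use that dilation by $\varpi$ sends $n\mapsto n+1$, so $\CC\cdot n$ generates a nontrivial extension of $\CC$ by constants. We then show that no $\mathrm{O}_{V_2}(F)$-invariant lift of $\CC$ exists, since $\pi$ has no invariant vectors mapping nontrivially to $\CC$.
\end{enumerate}

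\textbf{Step 3: part (ii).} By Lemma \ref{lem:ddescent}, $I\circ\tilde{d}_2$ factors through $d_2\circ I$: if $d_2(I(f))=0$ then $I(f)$ is in $\mathcal{S}_{\mathrm{ES}}$, so $f\in W_2'$, and Corollary \ref{eq:dvanish} gives $I(\tilde{d}_2(f))=0$. Hence we get a $P_2(F)$-equivariant surjection $J':\pi\to\mathcal{S}(X_1(F))$; it is $P_2$-equivariant by Proposition \ref{prop:action} applied to $\tilde\sigma_2$ restricted to $\mathrm{O}_{V_2}\subset P_3$. For injectivity, we compare composition series. Proposition \ref{prop:case1} shows $\mathcal{S}(X_1(F))$ has a filtration with $\mathcal{S}(X_1^\circ(F))$ and quotient $\mathrm{triv}\oplus\CC^2$. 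As a $P_2$-module, $\sigma$ restricted corresponds to $\mathcal{S}(X_1^\circ(F))\oplus\CC^2$-type pieces, and $\CC$ corresponds to $c_1$. So it suffices to show that $J'$ is nonzero on each constituent. We would check this on explicit functions $\one_{\varpi^j\OO^2}$ in the last variables, where $I(\tilde d_2 f)$ is computable and detects both $\tilde c_1\circ\tilde d_2$ (which pairs with $\tilde{c}_2$ via the invariance of $\tilde{c}_2+\tilde{c}_1\circ\tilde{d}_2$) and the $\mathbb{P}^1$-germs.

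\textbf{Step 4: normalization.} For unramified $\psi$, $\tilde d_2(\one_{V_3(\OO)})=\one_{V_2(\OO)}$, so $I\circ\tilde d_2$ sends $\one_{V_3(\OO)}$ to $b_1$; define $J:=J'$ there, so $J(d_2(b_2))=b_1$. For general $\psi$, we rescale by $z$ as in the proof of Proposition \ref{prop:case1}, producing the factor $\varepsilon(0,\psi)$.

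The main obstacle is Step 2(b) together with the injectivity in Step 3: showing that the germ space is exactly $\sigma$ extended by $\CC$, and that $J'$ kills nothing. This requires a careful explicit analysis of the asymptotic term $\tilde{a}_2$.
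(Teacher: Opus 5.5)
\textbf{There is a genuine gap in Step 2(b), which is the heart of part (i).} Formula \eqref{eq:forell=2} holds only after replacing $f$ by an $r_2(\SL_2(\OO))$-invariant representative. So the functions $g=f(\cdot,0,0)$ that actually enter $\tilde a_2$ range over $\mathcal{S}(V_2(F))^{\rho_2(\SL_2(\OO))}$, not over all of $\mathcal{S}(V_2(F))$. For unramified $\psi$ these are Fourier-invariant and supported on $\{Q_2(v)\in\OO\}$. On the full space your claim is false. For $m\ge 1$, take $g=\one_{\OO^\times e_4+\varpi^mV_2(\OO)}$. Then $\sum_j g(\varpi^{-j}x)=g(x)$ on $X_2(F)^1$. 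Under $\mathbb{P}X_2(F)\cong\mathbb{P}^1(F)\times\mathbb{P}^1(F)$ this is the indicator of a product of two small balls, $\{|a_1/a_2|\le q^{-m}\}\times\{|b_2/b_1|\le q^{-m}\}$. Such a function is not of the form $\phi(a)+\chi(b)$, so it has a nonzero $\mathrm{St}\otimes\mathrm{St}$ component. The test functions $\one_{x+\varpi^mV_2(\OO)}$ you propose would therefore give counterexamples, not a proof, and computing the invariant subspace directly is not a realistic substitute.

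The missing idea is generation by the basic vector. For unramified $\psi$:
\begin{itemize}
\item $\mathcal{S}(X_2(F))$ is irreducible and contains the spherical vector $b_2$, so by the Iwasawa decomposition it is spanned by $\sigma_2(P_3(F))b_2$.
\item $N_3(F)$ acts by characters that are trivial near the origin, so the germs at $0$ are spanned by translates of the germ of $b_2$ under $\mathrm{O}_{V_2}(F)$ and scaling.
\item The germ of $b_2$ is simple because $\tilde a_2(\one_{V_3(\OO)})=1$ is constant.
\end{itemize}
The paper turns this into holomorphy of $M_{w_0}(s)\otimes M_{w_0}(s)$ on Mellin transforms at $s=-1$, which excludes $\mathrm{St}\otimes\mathrm{St}$ by Lemma \ref{lem:sl2}. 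Alternatively, the germs are then spanned by constants and the functions $-\log_q|xg|$. Writing $x=a^tb$ with $|x|=|a||b|$ shows directly that $\log_q|xg|-\log_q|x|$ is a sum of pullbacks from the two factors.

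\textbf{Smaller points.}
\begin{itemize}
\item In Step 2(a) the orientation is reversed. $(\mathrm{St}\otimes\CC)\oplus(\CC\otimes\mathrm{St})$ is a subrepresentation of $C^\infty(\mathbb{P}X_2(F))/\CC$ with quotient $\mathrm{St}\otimes\mathrm{St}$. Likewise, Step 1's formula for $\pi$ overstates the image.
\item You never show that $\sigma$ actually occurs in $\pi$. The paper uses $\one_{V_3(\OO)}-\tilde b_2'$, which has $\tilde c_2=0$ and nonconstant $\tilde a_2$. You could instead deduce it from surjectivity of $J'$ onto $\mathcal{S}(X_1(F))$, which has length $3$ as a $P_2(F)$-module.
\item In 2(c), scaling is not in $\mathrm{O}_{V_2}(F)$. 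Modulo constants it only preserves the line spanned by the class of $n$, so it says nothing about nonsplitness of $\pi$. Use $t=\mathrm{diag}(1,1,\varpi^{-1},\varpi)\in P_2(F)$ instead. For any lift $G=-\log_q|x|+h(x)$ of $1\in\CC$, one has $G(xt)-G(x)=1$ at $x=\varpi^n e_4$ and $G(xt)-G(x)=-1$ at $x=\varpi^n e_3$. Hence no lift is invariant modulo constants.
\item The constituent-by-constituent check in Step 3 is unnecessary. Once (i) holds, every nonzero $P_2(F)$-submodule of $\pi$ contains the infinite-dimensional irreducible piece of $\sigma$ and so has finite codimension. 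A surjection onto the infinite-dimensional $\mathcal{S}(X_1(F))$ is then automatically injective, which is the paper's argument.
\end{itemize}
Your factorization in Step 3 via Corollary \ref{eq:dvanish} and the normalization in Step 4 agree with the paper.
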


\begin{proof}
By Proposition \ref{prop:min:rep}, $\mathcal{S}(X_2(F))=I(\widetilde{\mathcal{S}}(X_2(F)).$ Taking germs at the origin of functions in $\mathcal{S}(X_{2}(F))$ gives rise to an exact sequence of smooth $\SL_2^2(F)$-representations
\begin{align}\label{eq:2exact}
    0\lto \mathcal{S}(X_{2}^\circ(F))\lto \mathcal{S}(X_{2}(F))&\longrightarrow \pi''\lto 0.
\end{align}
By Lemma \ref{lem:easy}(ii), the map to $\pi''$ is given explicitly by
$$
I(f) \longmapsto \tilde{a}_2(f)(\cdot)-\tilde{c}_2(f) \log_{q}(|\cdot|).
$$
Note that both $\tilde{a}_2(f)$ and $\tilde{c}_2(f)$ only depend on $I(f),$ so this map is well-defined. 

 Since $\tilde{a}_2(f) \in C^\infty(\mathbb{P}X_2(F))=\mathrm{Ind}_{B}^{\SL_2}(1_{-1})\otimes \mathrm{Ind}_{B}^{\SL_2}(1_{-1}), $ 
one has an exact sequence of smooth $\SL_2^2(F)$-representations
\begin{align} \label{no:split}
\begin{split}
    0\lto  \pi'\lto  \pi''&\lto \CC \lto 0
\end{split}
\end{align}
where $\pi'$ is a subrepresentation of $\mathrm{Ind}_{B}^{\SL_2}(1_{-1})\otimes \mathrm{Ind}_{B}^{\SL_2}(1_{-1}).$ 
Here the second map is determined by the requirement that $-\tilde{c}_2(f)\log_{q}(|\cdot|)$ is sent to $\tilde{c}_2(f).$ 

To see $\CC\otimes \CC, \mathrm{St}\otimes \CC, \CC\otimes \mathrm{St}$ are composition factors of $\pi'$, by symmetry and Lemma \ref{lem:sl2} it suffices to show there is $f\in \mathcal{S}(V_{3}(F))$ such that $\tilde{a}_2(f)$ is nonconstant and $\tilde{c}_2(f)=0.$ We may assume $\psi$ is unramified. Consider $\tilde{b}_2':= \one_{\varpi \OO\times \varpi^{-1}\OO}\otimes \one_{\OO^4}$. It is $r_{2}(\SL_2(\OO))$-invariant. Then by \eqref{eq:forell=2} $\tilde{a}_2(\one_{V_3(\OO)}-\tilde{b}_2')$ is nonconstant and $\tilde{c}_2(\one_{V_3(\OO)}-\tilde{b}_2')=1-1=0$. 

Let $\pi:=\pi''/(\CC\otimes \CC)$. 
To complete the proof of (i) we are left with showing that $\pi'$ is a proper subrepresentation of $\mathrm{Ind}_{B}^{\SL_2}(1_{-1})\otimes \mathrm{Ind}_{B}^{\SL_2}(1_{-1})$. For $f\in \mathcal{S}(X_{2}(F))$, let
\begin{align*}
    f_s(x):&=\int_{F^\times}|a|^{s+1}f(ax) d^\times a,\quad x\in X_{2}^\circ(F)
\end{align*}
be the Mellin transform of $f$ along the quasicharacter $1_{s}\otimes 1_{s}$  (extended by meromorphic continuation). We claim that $f\in \mathcal{S}(X_{2}(F))$ only if $M_{w_0}(s)\otimes M_{w_0}(s)(f_s)$ is holomorphic at $s=-1$. Let $\tilde{f}\in \widetilde{\mathcal{S}}(X_2(F))$ such that $I(\tilde{f})=f.$ For $x\in X_{2}(F)^1,$ the Laurent series expansion of $f_s(x)$ at $s=-1$ is
\begin{align*}
     \tilde{c}_2(\tilde{f})\zeta(s+1)^2+(\tilde{a}_2(\tilde{f})(x)-\tilde{c}_2(\tilde{f}))\zeta(s+1)+O(1).
\end{align*}
Note that $\tilde{a}_2(\one_{V_3(\OO)})=1$ is a constant function, and thus $M_{w_0}(s)\otimes M_{w_0}(s)((b_2)_s)$ is holomorphic at $s=-1$ by Lemma \ref{lem:sl2}. \sloppy By Lemma \ref{lem:Iact} and the Iwasawa decomposition $\pi''\cong \mathcal{S}(X_{2}(F))/\mathcal{S}(X_{2}^\circ(F))$ is generated by (the image of) $b_2$ as an $\mathrm{\SL}^2_{2}(F)$-representation, so the claim follows. Now if $\pi'$ is not proper, then by Lemma \ref{lem:sl2} there is $\tilde{f}\in \widetilde{\mathcal{S}}(X_{2}(F))$ such that $M_{w_0}(-1)\otimes M_{w_0}(-1)(\tilde{a}_2(\tilde{f}))$ is nonzero, and thus $M_{w_0}(s)\otimes M_{w_0}(s)(I(f)_s)$ has a simple pole at $s=-1$, which is a contradiction.
    
Finally, to prove (ii) by Corollary \ref{eq:dvanish} we have a $P_{2}(F)$-equivariant surjection
    \begin{align*}
        \tilde{J}:\pi\cong \mathcal{S}(X_{2}(F))/\mathcal{S}_{\mathrm{ES}}(X_{2}(F))\cong \widetilde{\mathcal{S}}(X_{2}(F))/W_2'\xrightarrow{\,\,\tilde{d}_2\,\,} \widetilde{\mathcal{S}}(X_{1}(F))/\tilde{d}_2(W_2')\xrightarrow{\,\, I\,\,} \mathcal{S}(X_{1}(F)).
    \end{align*}
     Since $\pi$ as a $P_2(F)$-representation is of length $3$ whose maximal semisimple quotients are finite-dimensional,  $\tilde{J}$ must be injective and hence an isomorphism.  Let $J:\pi\tilde{\longrightarrow} \mathcal{S}(X_1)$ be the unique isomorphism such that $J(d_2(b_2))=b_1$. When $\psi$ is unramified, $J=\tilde{J}.$ For general $\psi,$ by Lemma \ref{lem:ddescent} for $f\in \widetilde{\mathcal{S}}(X_{2}(F))$ and $x\in X_{1}^\circ(F),$
    \begin{align*}
        I(\tilde{d}_2(f))(x)=\lim_{|a|\to 0} \int_{F} I(f)(ax,0,ay) \psi(y)dy.
    \end{align*}
    Say $x=(0,x_1)$. Then we have
    \begin{align*}
        I(\tilde{d}_2(f))(0,x_1)&=\lim_{|a|\to 0} \int_{F} I(f)(0,a,0,ax_1^{-1}y) \psi(y)dy\\
        &=|x_1|\lim_{|a|\to 0} \int_{F} I(f)(0,a,0,ay) \psi(x_1y)dy.
    \end{align*}
     Since $I(\tilde{d}_2(f))\in \mathcal{S}(X_1(F)),$ by Proposition \ref{prop:case1} the above function in $x_1$ is a function in $\mathcal{S}(F)+|\cdot| \mathcal{S}(F)$.  Let $z\in F^\times$ such that $\psi_u(y)=\psi(zy)$ is unramified. Let $d_u y$ be the self-dual Haar measure on $F$ with respect to $\psi_u$. Then 
    \begin{align*}
       &\lim_{|x_1|\to 0}I(\tilde{d}_2(f))(0,x_1)\\
       &=|z|^{-1/2}\lim_{|x_1|\to 0}|x_1|\lim_{|a|\to 0} \int_{F} I(f)(0,a,0,ay) \psi_u(x_1z^{-1}y)d_uy\\
       &=|z|^{1/2}\lim_{|x_1|\to 0}|x_1|\lim_{|a|\to 0} \int_{F} I(f)(0,a,0,ay) \psi_u(x_1y)d_uy.
    \end{align*}
    By comparing terms, we have $\tilde{J}=|z|^{1/2}J=\varepsilon(0,\psi)J.$    
\end{proof}

\begin{cor}\label{cor:2eq1}
For $f\in \mathcal{S}(X_{2}(F))$, $c_2(f)=c_1(d_2(f))$.
\end{cor}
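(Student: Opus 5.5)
Since $I:\widetilde{\mathcal{S}}(X_2(F))\to\mathcal{S}(X_2(F))$ is surjective by Proposition \ref{prop:min:rep}, we write $f=I(\tilde f)$. By Proposition \ref{prop:case2}(i) we have $c_2(f)=\tilde c_2(\tilde f)$. By Proposition \ref{prop:case2}(ii), $\varepsilon(0,\psi)\,d_2(f)=I(\tilde d_2(\tilde f))$. By Proposition \ref{prop:case1}, $c_1(I(g))=\tilde c_1(g)$ for $g\in\widetilde{\mathcal{S}}(X_1(F))$. Therefore
\begin{align*}
c_1(d_2(f))=\varepsilon(0,\psi)^{-1}\tilde c_1(\tilde d_2(\tilde f)),
\end{align*}
and the claim reduces to the identity $\tilde c_1(\tilde d_2(\tilde f))=\varepsilon(0,\psi)\tilde c_2(\tilde f)$ at the level of Schwartz functions on $V_3(F)$. (Here $d_2$ is viewed as landing in $\mathcal{S}(X_1(F))$ via the isomorphism $J$ of Proposition \ref{prop:case2}(ii).)

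To check this identity, we may assume $\tilde f$ is $r_2(\SL_2(\OO))$-invariant, as in the proof of Lemma \ref{lem:easy}. From that proof, $\tilde c_2(\tilde f)=\tilde f(0)$. On the other side, $\tilde c_1(g)=\zeta(1)^{-1}Z_1(g,1)$ with $g=\tilde d_2(\tilde f)=\mathcal{F}_2(v\mapsto \tilde f(v,0,0))$. Thus
\begin{align*}
Z_1(g,1)=\int_{F^\times}|t|\,\mathcal{F}_2(\tilde f)(0,0,0,t,0,0)\,d^\times t .
\end{align*}
We apply Tate's functional equation \eqref{eq:Tate} to the one-variable function $\phi(t)=\tilde f(0,0,0,t,0,0)$, exactly as in the computation of $C_\psi$ in the proof of Proposition \ref{prop:min:rep}. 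This gives
\begin{align*}
Z(\hat\phi,s)=\varepsilon(1-s,\psi)\frac{\zeta(s)}{\zeta(1-s)}Z(\phi,1-s).
\end{align*}
At $s=1$ the factor $\zeta(s)/\zeta(1-s)$ tends to $\zeta(1)/\zeta(0)$, while $Z(\phi,1-s)$ has a pole at $s=1$ with leading behavior $\phi(0)\zeta(1-s)$. The product therefore equals $\varepsilon(0,\psi)\zeta(1)\phi(0)$. Since $\phi(0)=\tilde f(0)$, we obtain $\tilde c_1(\tilde d_2(\tilde f))=\varepsilon(0,\psi)\tilde f(0)=\varepsilon(0,\psi)\tilde c_2(\tilde f)$, which is what we need.

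The main obstacle is bookkeeping of normalizations. First, $\tilde d_2$ evaluates on $V_2\times\{(0,0)\}$ and then Fourier transforms in the second coordinate, so we must check that the coordinate entering $Z_1$ is indeed the transformed one, and that the $K$-averaging defining $\Psi$ commutes with $\tilde d_2$ (it does, since $\tilde d_2$ is $\SL_2$-equivariant). Second, we must make sure that $\varepsilon(0,\psi)$ appears on both sides with the same sign and power, consistent with the normalization $J(d_2(b_2))=b_1$. As a sanity check, for unramified $\psi$ and $\tilde f=\one_{V_3(\OO)}$, both sides equal $1$, since $c_2(b_2)=\tilde c_2(\one)=1$ and $c_1(b_1)=1$.
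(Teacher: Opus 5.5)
Your proof is correct, but it takes a different route from the paper's.

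\textbf{The paper's route.} It argues abstractly. From the structure of $\pi$ established in Proposition \ref{prop:case2}, it asserts that $c_2$ and $c_1\circ d_2$ agree up to a nonzero scalar. It then fixes the scalar with one evaluation, taking $\psi$ unramified and $f=b_2$, where $c_2(b_2)=1=c_1(b_1)=c_1(d_2(b_2))$.

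\textbf{Your route.} You lift to $\mathcal{S}(V_2(F)\oplus F^2)$. You then check the identity $\tilde c_1(\tilde d_2(\tilde f))=\varepsilon(0,\psi)\,\tilde c_2(\tilde f)$ directly, using Tate's local functional equation \eqref{eq:Tate} for $\phi(t)=\tilde f(0,0,0,t,0,0)$.

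\textbf{What each buys.} Your approach avoids the proportionality claim, which the paper justifies only tersely, and it works for arbitrary $\psi$. The price is that it depends on the exact constant $\varepsilon(0,\psi)$ in Proposition \ref{prop:case2}(ii); your computation in effect confirms that normalization is consistent. Your identity is also exactly the local equality $\frac{Z_2(f_v,s)}{\zeta_v(s)}\big|_{s=0}=\frac{Z_1(\tilde d_2(f_v),s+1)}{\varepsilon(-s,\psi_v)\zeta_v(1+s)}\big|_{s=0}$ that the paper later uses in the proof of Theorem \ref{thm:main}. So your argument establishes that input independently as well.

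\textbf{Two small bookkeeping slips}, neither affecting the conclusion:
\begin{enumerate}
\item For nonarchimedean $F$, $\zeta(0)$ is a pole, so $\zeta(s)/\zeta(1-s)\to 0$ as $s\to 1$; it does not tend to a finite ``$\zeta(1)/\zeta(0)$''. What you mean is that $\zeta(1-s)^{-1}$ cancels the pole of $Z(\phi,1-s)$, giving $Z(\hat\phi,1)=\varepsilon(0,\psi)\zeta(1)\,[Z(\phi,u)/\zeta(u)]_{u=0}$.
\item For ramified $\psi$, one has $\tilde c_2(\tilde f)=\mathrm{vol}(\OO^\times)\tilde f(0)$ and $[Z(\phi,u)/\zeta(u)]_{u=0}=\mathrm{vol}(\OO^\times)\phi(0)$, not $\tilde f(0)$ and $\phi(0)$. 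The same factor $\mathrm{vol}(\OO^\times)$ appears on both sides, so it cancels. Alternatively, assume $\psi$ unramified from the start, as the paper does, since the statement is independent of $\psi$.
\end{enumerate}
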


\begin{proof}
By the computation in the proposition above, $c_2$ is equal to $c_1\circ d_2$ up to a nonzero scalar. Thus, it suffices to check the identity for $f=b_2$. The statement is independent of $\psi,$ so we can assume $\psi$ is unramified. By Proposition \ref{prop:case1} and  Proposition \ref{prop:case2} we have 
\begin{align*}
c_2(b_2)=1=c_1(b_1)=c_1(d_2(b_2)).\end{align*}
% By Proposition \ref{prop:case2}, $b_2$ generates the $\mathrm{O}_{V_2}(F)$-representation $\mathcal{S}(X_2(F))/\mathcal{S}_{\mathrm{ES}}(X_2(F))$.vanish on $\mathcal{S}_{\mathrm{ES}}(X_2(F))$ and are $\mathrm{O}_{V_2}(F)$-invariant linear functionals,
\end{proof}

We would like to replace $\pi''$ by $\pi \oplus (\CC\otimes\CC)$ in \eqref{eq:2exact} 
in analogy with the case of $\ell>2$ in Proposition \ref{prop:min:rep}, where $\CC\otimes\CC$ consists of germs of functions at the origin that are constant. But this is not possible if we consider \eqref{eq:2exact}  as an exact sequence of $\mathrm{O}_{V_2}(F)$-modules since $\CC\otimes\CC$ is the maximal semisimple $\mathrm{O}_{V_2}(F)$-subrepresentation of $\pi''$ (and $\pi'$). \quash{Indeed,  and the quotient consists of functions that are constant multiplies of $\log_{1/q}(|\cdot|)$ near zero.} However, if we view \eqref{eq:2exact} as an exact sequence of $P_2(F)$-modules, we can rewrite it as
\begin{align} \label{a2}
    0\lto \mathcal{S}(X_{2}^\circ(F))\lto \mathcal{S}(X_{2}(F))\xrightarrow{(a_2, d_2)} (\CC\otimes\CC)\oplus \pi\lto 0.
\end{align}
The map $a_2$ is defined as follows. The stabilizer of $(0:0:0:1)\in \mathbb{P}X_2(F)$ in $\mathrm{O}_{V_2}(F)$ is $P_2(F)$. For $f\in \mathcal{S}(X_2(F)),$ choose $\tilde{f}\in \widetilde{\mathcal{S}}(X_2(F))$ such that $I(\tilde{f})=f$. The linear functional
\begin{align*}
    a_2(f):=\tilde{a}_2(\tilde{f})(0:0:0:1)
\end{align*}
is well-defined by Lemma \ref{lem:easy}(ii) and $P_2(F)$-invariant. This gives a splitting \sloppy $\pi'=\sigma\oplus (\CC\otimes\CC)$ as $P_2(F)$-representations. We remark that for $f\in \mathcal{S}_{\mathrm{ES}}(X_{2}(F)),$ $a_2(f)=f(0).$

\begin{lem}\label{lem:der}
Let $f\in \mathcal{S}(V_2(F)\oplus F^2)$. We have 
\begin{align*}
    &\frac{d}{ds} \left(-\frac{Z_{2}(f,s)}{\zeta(s)}+\frac{Z_{1}(\tilde{d}_2(f), s+1)}{\varepsilon(-s,\psi)\zeta(s+1)}\right)\Bigg|_{s=0}\\
    &=\big(c_2(I(f))-a_2(I(f))\big)\log q.
\end{align*}
\end{lem}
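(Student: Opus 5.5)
The plan is to verify the identity by explicit computation, in the same style as the proof of Lemma \ref{lem:easy}, working in the nonarchimedean setting of this section. Both sides depend only on the $K$-average $\int_K r_2(k)f\,dk$. For the left side, $Z_2$ is defined through $\Psi_f$, and $\tilde{d}_2$ is $\SL_2(F)$-equivariant, so $Z_1(\tilde{d}_2(f),s)$ also only sees the $K$-average. For the right side, $c_2(I(f))=\tilde{c}_2(f)$ and $a_2(I(f))$ depend only on $I(f)$, which is $K$-invariant in $f$. So I may assume $f$ is $r_2(\SL_2(\OO))$-invariant, and I choose $A$ as in the proof of Lemma \ref{lem:easy}.

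\emph{First term.} From the formula obtained there,
\begin{align*}
\frac{Z_2(f,s)}{\zeta(s)}=q^{-As}f(0)+(1-q^{-s})\sum_{j=-A}^{A-1}q^{-sj}f(0_{\underline{4}},0,\varpi^j).
\end{align*}
Differentiating at $s=0$ gives
\begin{align*}
\log q\Big(-Af(0)+\sum_{j=-A}^{A-1} f(0_{\underline{4}},0,\varpi^j)\Big).
\end{align*}

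\emph{Second term.} I will apply the Tate functional equation \eqref{eq:Tate} exactly as in the proof of Proposition \ref{prop:min:rep}. The map $\tilde{d}_2$ is a partial Fourier transform in the fourth coordinate of $V_2$, and $Z_1$ evaluates at $(0_{\underline{2}},0,t)$. Hence
\begin{align*}
Z_1(\tilde{d}_2(f),s+1)=\varepsilon(-s,\psi)\frac{\zeta(s+1)}{\zeta(-s)}\int_{F^\times}|t|^{-s}g(t)\,d^\times t,\qquad g(t):=f(0_{\underline{2}},0,t,0,0).
\end{align*}
This explains the normalization by $\varepsilon(-s,\psi)\zeta(s+1)$ in the statement: the second term becomes $Z(g,-s)/\zeta(-s)$. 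The function $g$ is locally constant, equals $f(0)$ for $|t|\le q^{-A}$, and is supported in $|t|\le q^{A}$. Therefore
\begin{align*}
\frac{Z(g,-s)}{\zeta(-s)}=q^{As}f(0)+(1-q^{s})\sum_{j=-A}^{A-1}q^{sj}g(\varpi^j),
\end{align*}
whose derivative at $s=0$ is
\begin{align*}
\log q\Big(Af(0)-\sum_{j=-A}^{A-1} g(\varpi^j)\Big).
\end{align*}
The dependence on $\psi$ through $\mathrm{vol}(\OO^\times)$ should be absorbed by the $\varepsilon$-factor in \eqref{eq:Tate}. Alternatively, one can first reduce to unramified $\psi$ by the scaling argument used in Propositions \ref{prop:case1} and \ref{prop:case2}.

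\emph{Comparison.} Next I evaluate the right-hand side via \eqref{eq:forell=2} at $x=(0:0:0:1)$, that is, $x=(0,0,0,1)\in X_2(F)^1$. Using $\tilde{c}_2(f)=f(0)$ and $f(\varpi^{-j}x,0,0)=g(\varpi^{-j})$, this gives
\begin{align*}
c_2-a_2=2Af(0)-\sum_{j=-A}^{A-1}f(0_{\underline{4}},0,\varpi^j)-\sum_{j=-A+1}^{A}g(\varpi^{-j}).
\end{align*}
After reindexing $j\mapsto -j$, the last sum is $\sum_{j=-A}^{A-1} g(\varpi^{j})$. This is exactly the same combination of values produced by the two derivatives above, so the two sides agree term by term up to an overall sign.

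\emph{Main obstacle.} The expected difficulty is bookkeeping rather than anything conceptual. I must pin down which coordinate $\mathcal{F}_2$ transforms, and the orientation $t$ versus $t^{-1}$ in the Iwasawa parametrization, so that the overall sign and the ranges of $j$ match precisely. My naive computation appears to produce $-(c_2-a_2)\log q$. I would resolve this by rechecking the sign conventions in $\mathcal{F}_2$, in $H(g)$, and in the choice of the point $(0:0:0:1)$, testing against $f=\one_{V_3(\OO)}$ and $f=\tilde{b}_2'$ from the proof of Proposition \ref{prop:case2}. For both of these test functions every quantity above is explicit.
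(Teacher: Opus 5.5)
This is the paper's proof. You reduce to $r_2(\SL_2(\OO))$-invariant $f$, expand $Z_2(f,s)/\zeta(s)$ as in Lemma \ref{lem:easy}, rewrite the second term as $Z(g,-s)/\zeta(-s)$ via \eqref{eq:Tate}, and compare with \eqref{eq:forell=2} at $(0:0:0:1)$.

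The sign problem you flag does not exist, so the proof is complete as you wrote it. With the minus sign in front of $Z_2/\zeta$, your two derivatives combine to $\log q\bigl(2Af(0)-\sum_{j=-A}^{A-1}f(0_{\underline{4}},0,\varpi^j)-\sum_{j=-A}^{A-1}g(\varpi^j)\bigr)$, which is exactly your expression for $(c_2-a_2)\log q$. Also, for ramified $\psi$ the factor $\mathrm{vol}(\OO^\times)$ is not absorbed by $\varepsilon(-s,\psi)$. It simply multiplies every term on both sides, including $\tilde{c}_2(f)$ and $\tilde{a}_2(f)$, and that is how the paper carries it.
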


\begin{proof}
We may assume $f$ is $r_2(\SL_2(\OO))$-invariant. Using the computation in Lemma \ref{lem:easy} and the notation therein, we have
\begin{align}\label{eq:2asy}
\begin{split}
    \zeta(s)^{-1}Z_{2}(f,s)&=\mathrm{vol}(\mathcal{O}^\times)\left(q^{-As}f(0)+(1-q^{-s})\sum_{j=-A}^{A-1}q^{-js}f(0_{\underline{4}},0,\varpi^j)\right)\\
    &=\mathrm{vol}(\mathcal{O}^\times)f(0)-s\mathrm{vol}(\mathcal{O}^\times)(\log q)\big(Af(0)-\sum_{j=-A}^{A-1}f(0_{\underline{4}},0,\varpi^j)\big)+O_{f}(s^2)
\end{split}
\end{align}
as $s \to 0.$ On the other hand, by \eqref{eq:Tate}
\begin{align*}
    Z_1(\tilde{d}_2(f),s+1)=\varepsilon(-s,\psi)\frac{\zeta(1+s)}{\zeta(-s)}\int_{F^\times} f(0_{\underline{2}},0,t,0,0)|t|^{-s}d^\times t,
\end{align*}
so
\begin{align*}
    \frac{Z_{1}(\tilde{d}_2(f), s+1)}{\varepsilon(-s,\psi)\zeta(s+1)}&=(1-q^s)\int_{|t|> q^{-A}} f(\underline{0}_2,0,t,0,0)|t|^{-s} d^\times  t+\mathrm{vol}(\mathcal{O}^\times)f(0)q^{As}.
\end{align*}
The coefficient of $s$ in the Taylor series expansion of this function at $s=0$ is
\begin{align*}
     \mathrm{vol}(\mathcal{O}^\times)(\log q)Af(0)-\log q \int_{|t|>q^{-A}} f(0_{\underline{2}},0,t,0,0)d^\times t.
\end{align*}
Consequently,  by \eqref{eq:forell=2} and Proposition \ref{prop:case2}
\begin{align*}
    &\frac{d}{ds} \left(-\frac{Z_{2}(f,s)}{\zeta(s)}+\frac{Z_{1}(\tilde{d}_2(f), s+1)}{\varepsilon(-s,\psi)\zeta(s+1)}\right)\Bigg|_{s=0}\\
    &=\mathrm{vol}(\mathcal{O}^\times)(\log q)\big(Af(0)-\sum_{j=-A}^{A-1}f(0_{\underline{4}},0,\varpi^j)\big)\\
    &+\mathrm{vol}(\mathcal{O}^\times)(\log q)(Af(0)-\sum_{j=-A+1}^{A} f(0_{\underline{2}},0,\varpi^{-j},0,0))\\
    &=(\log q)(\tilde{c}_2(f)-\tilde{a}_2(f)(0:0:0:1))\\
    &=(\log q)(c_2(I(f))-a_2(I(f))).
\end{align*}
\end{proof}

\section{Agreement of Schwartz spaces: archimedean}\label{sec:agree:arch}

In this section, $F$ is archimedean. 

\begin{lem}\label{lem:easy:arch}
\begin{enumerate}[label=(\roman*)]
    \item  Suppose $\ell\ge 3$. For $f\in W_\ell'$ we have
    \begin{align*}
        \tilde{c}_\ell(f)=\begin{cases}
            I(f)(0) &\textrm{if } \ell \textrm{ is odd and }F=\RR,\\
            0 & \textrm{otherwise.}
        \end{cases}
    \end{align*}     
    \item Suppose $\ell=2$. For $f\in\mathcal{S}(V_2(F)\oplus F^2),$ $x\in X_{2}(F)^1\cong \mathbb{P}X_2(F)$ 
    \begin{align*} 
        I(f)(ax)=-\tilde{c}_2(f)\log|a|+\tilde{a}_2(f)(x)+o(1) \quad\quad \textrm{ as } |a|\to 0
    \end{align*}
    for some function $\tilde{a}_2(f)\in C^\infty(\mathbb{P}X_2(F))$, and the difference
    \begin{align*}
 Z_{2}(f,s)-\frac{1}{s}\tilde{c}_2(f)
    \end{align*}
    is holomorphic for $\mathrm{Re}(s)>-1/2$. Both $\tilde{c}_2(f)$ and $\tilde{a}_2(f)$ depend only on $I(f)$.
    \item For $f\in \widetilde{\mathcal{S}}(X_1(F)),$ the integral defining $I(f)(x)$ is absolutely convergent for all $x\in X_1(F)$. We have $\tilde{c}_1(f)=\zeta(1)^{-1}I(f)(0).$
\end{enumerate}

\end{lem}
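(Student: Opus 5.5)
We mirror the nonarchimedean proof of Lemma \ref{lem:easy}, replacing the finite geometric sums by Mellin analysis. Since $I$ and $Z_\ell$ are $r_\ell(K)$-invariant, we may assume $f\in\mathcal{S}(V_{\ell+1}(F))$ is $r_\ell(K)$-invariant. The Iwasawa decomposition then gives, for $x\in X_\ell^\circ(F)$,
\begin{align*}
I(f)(x)=\int_{F^\times}|t|^{2-\ell}f(t^{-1}x,0,t)\,d^\times t,\qquad Z_\ell(f,s)=\int_{F^\times}f(0_{\underline{2\ell}},0,t)|t|^s\,d^\times t.
\end{align*}
Fix $x\in X_\ell(F)^1$ and $a\in F^\times$. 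We split $I(f)(ax)$ with a smooth partition of unity in $t$ into three regions: $|t|\gg |a|^{1/2}$, $|t|\ll|a|^{1/2}$, and the transition region.

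In the first region we Taylor expand $f(t^{-1}ax,0,t)$ in the first variable around $0$. The leading term is $f(0_{\underline{2\ell}},0,t)$, and its contribution is a truncated Tate integral of $t\mapsto f(0,0,t)$ at $s=2-\ell$. In the second region we substitute $t\mapsto at$, which gives $|a|^{2-\ell}\int|t|^{2-\ell}f(t^{-1}x,0,at)\,d^\times t$, and we Taylor expand in the last variable. The leading term is $|a|^{2-\ell}$ times a Tate-type integral of $t\mapsto f(t^{-1}x,0,0)$.

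Summing the pieces and using the meromorphic continuation of the Tate integral $Z(\Psi_f,s)=\zeta(s)\cdot(\text{entire})$, we obtain an asymptotic expansion of $I(f)(ax)$ as $|a|\to0$. It consists of a constant term plus powers $|a|^{2-\ell+k}$, with $\log|a|$ terms exactly where the two Mellin exponents collide, that is, where $\zeta(s)$ has a pole at $s=2-\ell$.

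For (i), observe that $\zeta_\RR(s)$ has poles at $s\in-2\ZZ_{\ge0}$ and $\zeta_\CC(s)$ at $s\in-\ZZ_{\ge0}$.
\begin{itemize}
\item If $F=\RR$ and $\ell$ is odd, then $\zeta$ is holomorphic at $2-\ell$. The constant term of the expansion is $Z_\ell(f,2-\ell)=\tilde c_\ell(f)$. Since $I(f)\in\mathcal{S}_{\mathrm{ES}}(X_\ell(F))$ is smooth at $0$, that constant term equals $I(f)(0)$.
\item Otherwise, $\tilde c_\ell(f)$ is the residue of $Z_\ell(f,s)$ at $s=2-\ell$. This residue appears as the coefficient of a term $|a|^{k}\log|a|$, namely the $-\log|x|$ component in Theorem \ref{thm:asymplge3}, with $k$ determined by the Taylor order. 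A function in $\mathcal{S}_{\mathrm{ES}}$ restricted to the cone has a pure Taylor germ with no logarithms, so this coefficient vanishes and $\tilde c_\ell(f)=0$.
\end{itemize}
The subtle point is that the residue of the Tate integral is the coefficient of a specific homogeneous polynomial piece. By $K$-invariance, this piece is a multiple of $|f|$'s radial derivative at $0$, and it must be matched with the log-coefficient. We expect this step to be the main obstacle: one must check that the residue of $Z_\ell$ is exactly the log-coefficient, and not cancelled by a contribution from the second region. We will handle it by doing the Mellin transform in $a$ of $I(f)(ax)$ directly. That transform factors as a product of two Tate-type integrals, and its double pole at $2-\ell$ isolates $\tilde c_\ell$.

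For (ii), take $\ell=2$. The same Mellin computation shows that $\int|a|^{s}I(f)(ax)\,d^\times a$ has a double pole at $s=0$ with leading coefficient $\tilde c_2(f)=\mathrm{Res}_{s=0}Z_2(f,s)$. This gives $I(f)(ax)=-\tilde c_2(f)\log|a|+\tilde a_2(f)(x)+o(1)$, where $\tilde a_2(f)(x)$ is read off from the constant terms and is smooth in $x$ on $\mathbb{P}X_2(F)$. The next pole of $\zeta$ lies at $s=-1$ for $\RR$. Together with the decay $O(|a|^{1/2})$ of the remainder, this gives holomorphy of $Z_2(f,s)-\tilde c_2(f)/s$ for $\mathrm{Re}(s)>-1/2$. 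Dependence only on $I(f)$ is immediate, since both quantities are coefficients of the asymptotic expansion of $I(f)$.

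For (iii), take $\ell=1$. The integrand is $|t|f(t^{-1}x,0,t)$, and this is integrable near $t=0$ even for $x=0$, so $I(f)(0)=Z_1(f,1)$. Hence $\tilde c_1(f)=\zeta(1)^{-1}I(f)(0)$.
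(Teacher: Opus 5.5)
Your argument is correct in substance, but it takes a different route from the paper. The paper never Mellin-transforms in $a$. For (i) it reduces to $f(v,0,t)=f_1(v)f_2(t)$ and Taylor-expands $f_2$ to order $\ell-2$ at $t=0$. It then argues that the existence of $\lim_{|a|\to0}I(f)(ax)=I(f)(0)$ leaves only the order-$(\ell-2)$ piece. That piece either grows like $\log|a|$, forcing the residue of $Z_\ell$ at $2-\ell$ to vanish, or is matched term by term with $Z_\ell(f,2-\ell)$. For (ii) the paper splits the $t$-integral into three explicit pieces, reading off the coefficient $\mathrm{vol}(K_{\GG_m})f(0)/[F:\RR]$ of $-\log|a|$ and the formula \eqref{eq:a2:arch} for $\tilde a_2(f)$.

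Your working argument is the Mellin step; the partition-of-unity paragraph is superseded and can be dropped. After substituting $a=tb$, for $\mathrm{Re}(w)>\ell-2$ you have
\begin{align*}
\int_{F^\times}|a|^{w}I(f)(ax)\,d^\times a=\iint|b|^{w}|t|^{w+2-\ell}f(bx,0,t)\,d^\times b\,d^\times t .
\end{align*}
The right side is a two-variable Tate integral. It is a product only for pure tensors, but the pole bookkeeping is the same. Near $w=0$ it equals $c\,w^{-1}Z_\ell(f,w+2-\ell)$, with $c=\mathrm{vol}(K_{\GG_m})/[F:\RR]$, plus a function with at most a simple pole at $w=0$, holomorphic there when $\zeta$ is holomorphic at $2-\ell$. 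For $f\in W_\ell'$ the left side is a Tate integral of the Schwartz function $a\mapsto I(f)(ax)$. So it has only a simple pole at $w=0$, with residue $c\,I(f)(0)$.

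Comparing coefficients of $w^{-2}$ and $w^{-1}$ gives both cases of (i) as an identity of meromorphic functions. This is what your route buys in (i):
\begin{itemize}
\item it needs no asymptotic expansion or contour shift;
\item it absorbs the lower-order Taylor terms automatically through meromorphic continuation;
\item it avoids the reduction to pure tensors, which does not obviously preserve the hypothesis $f\in W_\ell'$.
\end{itemize}

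In (ii) the balance tips the other way. Turning the double pole into the pointwise statement $I(f)(ax)=-\tilde c_2(f)\log|a|+\tilde a_2(f)(x)+o(1)$ requires a Mellin inversion step that you leave implicit. It needs decay on vertical lines, the absence of poles in $-\delta<\mathrm{Re}(w)<0$, and control of each $a$ rather than only the average over fixed $|a|$. The last point can be handled by taking $a\in\RR_{>0}$ and letting $x$ absorb the angular part. You also only assert that $\tilde a_2(f)$ is smooth on $\mathbb{P}X_2(F)$. The paper's splitting gives the expansion directly and makes smoothness and $K_{\GG_m}$-invariance of $\tilde a_2(f)$ evident. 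It also produces the explicit formula that Lemma \ref{lem:der:arch} uses later.

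Two small corrections. First, the logarithmic term carrying $\tilde c_\ell$ in (i) is the $|a|^0\log|a|$ term, coming from the double pole at $w=0$. It is not an $|a|^k\log|a|$ term with $k$ fixed by a Taylor order. Second, the holomorphy of $Z_2(f,s)-\tilde c_2(f)/s$ on $\mathrm{Re}(s)>-1/2$ has nothing to do with the $O(|a|^{1/2})$ remainder of $I(f)$. It follows at once from $\tilde c_2(f)=\mathrm{Res}_{s=0}Z_2(f,s)$ and the entireness of $Z_2(f,s)/\zeta(s)$, since $\zeta_F$ has no other pole in $\mathrm{Re}(s)>-1$. Its next pole is at $-2$ for $F=\RR$ (not $-1$, as you wrote) and at $-1$ for $F=\CC$. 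Part (iii) is the same as in the paper.
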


\begin{proof}
    Let $f\in \mathcal{S}(V_{\ell+1}(F)).$ We may assume $f$ is $r_\ell(K)$-invariant. Therefore, for $x\in X_\ell^\circ(F)$
    \begin{align*}
        I(f)(x)&=\int_{F^\times} |t|^{2-\ell} f(t^{-1}x,0,t)d^\times t
    \end{align*}
    and 
    \begin{align*}
        Z_\ell(f,s)=\int_{F^\times} f(0_{\underline{2\ell}},0,t)|t|^sd^\times t.
    \end{align*}
    Both integral converges absolutely when $\ell=s=1,$ in which case we can take $x=0$. This proves (iii).
    
   Suppose $\ell\ge 3$. Since these two operators are continuous in $f$, to compute both terms we now assume $f(v,0,t)=f_1(v)f_2(t)$ for some $f_1\in \mathcal{S}(F^{2\ell})$ and  $f_2\in \mathcal{S}(F)$.  We will prove the lemma for $F=\RR.$ The case $F=\CC$ is analogous by identifying $\CC=\RR^2,$ so we leave it to the reader. 
   
    Choose $r>0$ and complex numbers $b_0,\ldots, b_{\ell-2}$ such that 
   \begin{align*}
       \bigg|f_2(t)-\sum_{n=0}^{\ell-2}b_nt^n\bigg|\le |t|^{\ell-2+1/4} \textrm{ for all } |t|\le r. 
   \end{align*}
   Then for $a\in F^\times$
   \begin{align*}
       I(f)(ax)&=\int_{|t|>r} |t|^{2-\ell}f_1(t^{-1}ax)f_2(t) d^\times t+\int_{|t|\le r} |t|^{2-\ell}f_1(t^{-1}ax)\left(f_2(t)-\sum_{n=0}^{\ell-2} b_nt^n\right) d^\times t\\
       &+\sum_{n=0}^{\ell-2}\int_{|t|\le r} b_nt^n|t|^{2-\ell} f_1(t^{-1}ax)d^\times t.
   \end{align*}
   The first two terms are absolutely convergent and bounded above by a constant independent of $a$ and $x$. For the last term, by changing variables we obtain
   \begin{align*}
       \sum_{n=0}^{\ell-2} b_n a^n|a|^{2-\ell}\int_{|t|\le |a|^{-1}r}t^n|t|^{2-\ell} f_1(t^{-1}x)d^\times t.
   \end{align*}
    Therefore, if $f\in W_\ell'$ so that $I(f)(0)=\lim_{|a|\to 0}I(f)(ax)$ exists, each summand above is zero except possibly for $n=\ell-2$. In this case, we arrive at
    \begin{align*}
        b_{\ell-2}\mathrm{sgn}^{\ell}(a)\int_{|t|\le |a|^{-1}r} \mathrm{sgn}^\ell(t)f_1(t^{-1} x)d^\times  t.
    \end{align*}
    This term must be zero if $\ell$ is odd. For $\ell$ even, it must be $f_1(0)=0.$ 
    
    When $\ell$ is odd, we have 
    \begin{align*}
        I(f)(0)&=f_1(0)\left(\int_{|t|>r} |t|^{2-\ell}f_2(t) d^\times t+\int_{|t|\le r} |t|^{2-\ell}\left(f_2(t)-b_{\ell-2}t^{\ell-2}\right) d^\times t\right).
    \end{align*}
    On the other hand, for any $\ell\ge 3$ we have
    \begin{align*}
        Z_\ell(f,s)=f_1(0)\left(\int_{|t|>r} f_2(t)|t|^{s}d^\times+\int_{|t|>r} (f_2(t)-b_{\ell-2}t^{\ell-2})|t|^sd^\times t+\int_{|t|<r} b_{\ell-2}t^{\ell-2}|t|^s d^\times t\right).
    \end{align*}
    This is zero when $\ell$ is even. When $\ell$ is odd, the last integral vanishes, so $I(f)(0)=Z_\ell(f,2-\ell).$ This proves (i). 
    
    Suppose $\ell=2$. Let $x\in X_2(F)^1 $ and $|a|<1$. Write
    \begin{align*}
        I(f)(ax)&=\int_{|t|>1} f(t^{-1}ax,0,t)d^\times t+ \int_{|t|\le 1} f(t^{-1}ax,0,t)-f(t^{-1}ax,0,0)d^\times t\\
        &+\int_{|t|\le |a|^{-1}} f(t^{-1}x,0,0)d^\times t.
    \end{align*}
    The last term can be written as
    \begin{align*}
         &\int_{|t|< 1} f(t^{-1}x,0,0)d^\times t+\int_{1\le |t|\le |a|^{-1}} f(t^{-1}x,0,0)d^\times t\\
    &=\int_{|t|> 1} f(tx,0,0)d^\times t+\int_{1\ge |t|\ge|a|} f(tx,0,0)-f(0)d^\times t -\frac{\mathrm{vol}(K_{\GG_m})}{[F:\RR]}f(0)\log |a|.
    \end{align*}
    Observe that
    \begin{align*}
        \lim_{|a|\to 0} I(f)(ax)+\frac{\mathrm{vol}(K_{\GG_m})}{[F:\RR]}f(0)\log |a| 
    \end{align*}
    is well-defined, and we denote this as $\tilde{a}_2(f)(x)$. Explicitly,
    \begin{align}\label{eq:a2:arch}
    \begin{split}
        \tilde{a}_2(f)(x)&=\int_{|t|>1} f(0_{\underline{4}},0,t)d^\times t+ \int_{|t|\le 1} f(0_{\underline{4}},0,t)-f(0)d^\times t\\
        &+\int_{|t|> 1} f(tx,0,0)d^\times t+\int_{|t|\le 1} f(tx,0,0)-f(0)d^\times t.
    \end{split}
    \end{align}

    On the other hand,
    \begin{align}\label{eq:Z2compute}
        Z_2(f,s)=\int_{|t|\ge 1} f(0_{\underline{4}},0,t)|t|^s d^\times t+\int_{|t|<1} (f(0_{\underline{4}},0,t)-f(0))|t|^s d^\times t+\int_{|t|<1} f(0)|t|^s d^\times t.
    \end{align}
    The first two integrals are absolutely convergent for $\mathrm{Re}(s)>-1/2$. The last term above is $\frac{1}{[F:\RR]s}\mathrm{vol}(K_{\GG_m})f(0),$ so $\tilde{c}_2(I(f))=\frac{\mathrm{vol}(K_{\GG_m})}{[F:\RR]}f(0).$ This justifies (ii).
\end{proof}

\begin{lem}\label{lem:cequal}
    Suppose $F=\RR$ and $\ell$ is even, or $F=\CC$ and $\ell\ge 2$. Suppose $\psi(t)=e^{2\pi i\mathrm{tr}_{F/\RR}(t)}.$ Then
    \begin{align*}
        \tilde{c}_\ell(\tilde{b}_\ell)=c_\ell(b_\ell)\neq 0.
    \end{align*}
\end{lem}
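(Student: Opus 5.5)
The plan is to compute both sides explicitly for the Gaussian $\tilde b_\ell(v,u_1,u_2)=e^{-[F:\RR]\pi(|v|_\RR^2+|u_1|_\RR^2+|u_2|_\RR^2)}$. On the theta side this is a Tate integral. On the Braverman--Kazhdan side we read off the coefficient of $-\log|x|$ in the germ of $b_\ell=I(\tilde b_\ell)$ at the origin. Since $\tilde b_\ell$ is $r_\ell(K)$-invariant when $\psi(t)=e^{2\pi i\,\mathrm{tr}_{F/\RR}(t)}$, the Iwasawa formulas from the proof of Lemma~\ref{lem:easy:arch} apply directly:
\begin{align*}
I(\tilde b_\ell)(ax)=\int_{F^\times}|t|^{2-\ell}f_1(t^{-1}ax)f_2(t)\,d^\times t,\qquad Z_\ell(\tilde b_\ell,s)=f_1(0)\int_{F^\times}f_2(t)|t|^s\,d^\times t .
\end{align*}
Here $f_1(v)=e^{-[F:\RR]\pi|v|_\RR^2}$ and $f_2(t)=e^{-[F:\RR]\pi|t|_\RR^2}$, so $f_1(0)=1$. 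We treat $F=\RR$ and note that $F=\CC$ is identical after writing $|t|=|t|_\RR^2$.

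\emph{Theta side.} Choose $r>0$ and split off the Taylor polynomial of $f_2$ up to order $\ell-2$ on $|t|\le r$, exactly as in the proof of Lemma~\ref{lem:easy:arch}(i). The remainder integrals are holomorphic near $s=2-\ell$. Monomials $t^n$ with $n<\ell-2$ give poles only at $s=-n\neq 2-\ell$. The monomial $b_{\ell-2}t^{\ell-2}$ contributes $b_{\ell-2}\int_{|t|<r}\mathrm{sgn}^\ell(t)|t|^{s+\ell-2}\,d^\times t$. Because $\ell$ is even (or $F=\CC$), this integral is $b_{\ell-2}\mathrm{vol}(K_{\GG_m})r^{s+\ell-2}/([F:\RR](s+\ell-2))$. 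Hence
\[
\tilde c_\ell(\tilde b_\ell)=\mathrm{Res}_{s=2-\ell}Z_\ell(\tilde b_\ell,s)=\frac{\mathrm{vol}(K_{\GG_m})}{[F:\RR]}\,b_{\ell-2}.
\]
For $\ell=2$ this is the computation already made in Lemma~\ref{lem:easy:arch}(ii). For the Gaussian, $b_{\ell-2}=(-[F:\RR]\pi)^{(\ell-2)/[F:\RR]}/((\ell-2)/[F:\RR])!$ when $F=\RR$ (and the analogous value for $\CC$). This is nonzero, which gives the nonvanishing. As a cross-check one can compare directly with the residue of $\zeta(s)$ at $s=2-\ell$, since $Z_\ell(\tilde b_\ell,s)=\zeta(s)$ up to the normalization of $\tilde b_\ell$.

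\emph{Geometric side.} Apply the same splitting to $I(\tilde b_\ell)(ax)$ with $x\in X_\ell(F)^1$. The pieces with $|t|>r$ and with the Taylor remainder are bounded as $|a|\to0$ and converge to smooth functions of $x$. After substituting $t\mapsto at$, the monomial terms with $n<\ell-2$ are homogeneous of degree $n-(\ell-2)<0$ in $a$, so they produce the non-logarithmic parts of the germ (the $\mathcal{S}(X_{\ell-1}(F))$ and $\CC$ components). The $n=\ell-2$ term is
\[
b_{\ell-2}\int_{|t|\le r|a|^{-1}}f_1(t^{-1}x)\,d^\times t=-\frac{\mathrm{vol}(K_{\GG_m})}{[F:\RR]}\,b_{\ell-2}\,f_1(0)\log|a|+O(1).
\]
This is the only source of $\log|a|$. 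By Theorem~\ref{thm:asymplge3} and the remark after it, $c_\ell$ takes values in $\CC(-\log|x|)$, i.e.\ it is the coefficient of $-\log|x|$ in the germ. Since $|ax|=|a|$ on $X_\ell(F)^1$, we get $c_\ell(b_\ell)=\frac{\mathrm{vol}(K_{\GG_m})}{[F:\RR]}b_{\ell-2}$, which equals $\tilde c_\ell(\tilde b_\ell)$. Alternatively, the same value follows from the $z^k\log z$ coefficient of the power series of $K_k$, namely $(-1)^{k+1}I_k(z)\log(z/2)$, applied to the explicit formula \eqref{eq:basicarch}.

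The main obstacle is matching normalizations. The key check is that the symbolic $c_\ell$ of \cite{Hsu:asymptotics} is literally the coefficient of $-\log|x|$, with $|x|=|x|_\RR^{[F:\RR]}$, rather than some rescaling of it. If it turns out to be a rescaling, I would verify it by running the Bessel-function computation against the definition in \cite[\S5.5]{Hsu:asymptotics}. A secondary point is that $d^\times t$ carries the factor $\zeta(1)$, which must be tracked consistently on both sides; it enters both only through $\mathrm{vol}(K_{\GG_m})$.
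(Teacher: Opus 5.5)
Your proposal is correct. Its main argument differs from the paper's, mainly on the geometric side.

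The paper computes $\tilde c_\ell(\tilde b_\ell)=\mathrm{Res}_{s=2-\ell}\zeta(s)$ from the Gamma function. It then reads $c_\ell(b_\ell)$ off the tabulated small-argument expansion of $K_\nu$, applied to the closed form $b_\ell=I(\tilde b_\ell)=2\pi^{\frac{\ell-2}{2}}\widetilde{K}_{\frac{\ell-2}{2}}(2\pi|x|_\RR)$ (resp. $4(2\pi)^{\ell-2}\widetilde{K}_{\ell-2}(4\pi|x|_\RR)$), and compares the two explicit numbers. This is the alternative you mention at the end.

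You instead extract the coefficient of $-\log|a|$ directly from the integral for $I(\tilde b_\ell)(ax)$, using the Taylor splitting of Lemma \ref{lem:easy:arch}(i). The single Taylor coefficient $b_{\ell-2}$ of $f_2$ at $|t|^{\ell-2}$ then produces both the pole of the Tate integral at $s=2-\ell$ and the logarithm in the germ. Each comes with the same factor $\mathrm{vol}(K_{\GG_m})/[F:\RR]$, which equals $2$ for both $\RR$ and $\CC$.

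What your route buys:
\begin{itemize}
\item It avoids special functions.
\item It explains \emph{why} the two quantities agree.
\item It is not specific to the Gaussian. The same computation applies to any $r_\ell(K)$-invariant $f$ with $f(v,0,t)=f_1(v)f_2(t)$, so it essentially gives $\tilde c_\ell=c_\ell\circ I$ directly. The paper obtains that identity afterwards (Propositions \ref{prop:min:rep:arch} and \ref{prop:case2:arch}) from the value at the basic vector plus uniqueness of the invariant functional.
\end{itemize}
The paper's route is shorter once the Bessel closed form of $b_\ell$ is available.

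The normalization you flagged is exactly the one the paper uses: $c_\ell(b_\ell)$ is the coefficient of $-\log|x|$ with $|x|=|x|_\RR^{[F:\RR]}$.

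One slip: the exponent in your value of $b_{\ell-2}$ should be $[F:\RR](\ell-2)/2$, not $(\ell-2)/[F:\RR]$. Concretely:
\begin{itemize}
\item for $F=\RR$, $b_{\ell-2}=(-\pi)^{\frac{\ell-2}{2}}/(\frac{\ell-2}{2})!$;
\item for $F=\CC$, $b_{\ell-2}=(-2\pi)^{\ell-2}/(\ell-2)!$, the coefficient of $(t\bar t)^{\ell-2}$ in $e^{-2\pi t\bar t}$.
\end{itemize}
With these values, $2b_{\ell-2}$ reproduces the paper's residues exactly, and the nonvanishing is unaffected. The hypothesis ($\ell$ even, or $F=\CC$) is precisely what makes $|t|^{\ell-2}$ an even power of $|t|_\RR$, so that this Gaussian Taylor coefficient is nonzero.
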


\begin{proof}
     We have
    \begin{align*}
        Z_{\ell}(\tilde{b}_\ell,s)=\int_{F^\times} e^{-\pi [F:\RR]|t|_\RR^2}|t|^s d^\times t=\zeta(s).
    \end{align*}
    Assume $\ell$ is even and $F=\RR$. Then
    \begin{align*}
    \tilde{c}_\ell(\tilde{b}_\ell)=\mathrm{Res}_{s=2-\ell} \pi^{-s/2}\Gamma\left(\frac{s}{2}\right)=2\pi^{\frac{\ell-2}{2}}\frac{(-1)^{\frac{\ell-2}{2}}}{\left(\frac{\ell-2}{2}\right)!}.
    \end{align*}
    On the other hand by \cite[\S 7.2, (12), (37), (38)]{Highertrans:II}, the coefficient of $-\log |x|_\RR=-\log |x|$ in the asymptotic expansion of $b_\ell(x)=2\pi^{\frac{\ell-2}{2}}\widetilde{K}_{\frac{\ell-2}{2}}(2\pi|x|_\RR)$ is
    \begin{align*}
        c_\ell(b_\ell)=-2\pi^{\frac{\ell-2}{2}}\frac{(-1)^{\ell/2}}{\Gamma(\ell/2)}=\tilde{c}_\ell(\tilde{b}_\ell).
    \end{align*}

    Suppose $F=\CC$ and $\ell\ge 2$. Then
    \begin{align*}
\tilde{c}_\ell(\tilde{b}_\ell)=\mathrm{Res}_{s=2-\ell} 2(2\pi)^{-s}\Gamma(s)=2(2\pi)^{\ell-2}\frac{(-1)^{\ell-2}}{(\ell-2)!}.
    \end{align*}
In the asymptotic expansion of $b_\ell(x)=4(2\pi)^{\ell-2}\widetilde{K}_{\ell-2}(4\pi |x|_\RR)$, the coefficient of $-\log |x|=-2\log |x|_\RR$ is
    \begin{align*}
        \tilde{c}_\ell(b_\ell)=-2(2\pi)^{\ell-2}\frac{(-1)^{\ell-1}}{\Gamma(\ell-1)}=c_\ell(b_\ell).
    \end{align*}
    \end{proof}

\begin{prop}\label{prop:min:rep:arch}
Suppose $\ell \ge 2$. Then $I(\widetilde{\mathcal{S}}(X_\ell(F)))=\mathcal{S}(X_\ell(F)),$ and the map $I$ is $\mathrm{O}_{V_{\ell+1}}(F)$-equivariant. In particular, the diagram
$$
\begin{CD}
\widetilde{\mathcal{S}}(X_\ell(F)) @>{\widetilde{\mathcal{F}}_{X_\ell}}>> \widetilde{\mathcal{S}}(X_\ell(F))\\
@V{ I}VV @V{ I}VV
\\
\mathcal{S}(X_\ell(F)) @>{\mathcal{F}_{X_\ell}}>>  \mathcal{S}(X_\ell(F))
\end{CD}
$$
commutes. Moreover, for $\ell\ge 3$ we have a commutative diagram of $\mathrm{O}_{V_\ell}(F)$-modules 
    \begin{align*}
    \begin{CD}
    \widetilde{\mathcal{S}}(X_\ell(F)) @>{(\tilde{c}_\ell,\tilde{d}_{\ell})}>> \CC\oplus\widetilde{\mathcal{S}}(X_{\ell-1}(F))\\
    @V{I}VV @V{(\mathrm{id},I)}VV
    \\
    \mathcal{S}(X_\ell(F)) @>{(c_\ell,\varepsilon(\ell-2,\psi)d_{\ell})}>>  \CC\oplus\mathcal{S}(X_{\ell-1}(F)).
    \end{CD}
    \end{align*}
\end{prop}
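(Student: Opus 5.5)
The plan mirrors the nonarchimedean proof of Proposition \ref{prop:min:rep}, with irreducibility and spherical vectors replaced by Harish-Chandra modules and continuity. First I will show that $\mathcal{F}=\mathcal{F}_{X_\ell}$ on $I(\widetilde{\mathcal{S}}(X_\ell(F)))$, using Lemma \ref{lem:des} to define $\mathcal{F}$. By \eqref{eq:Pequal}, $\sigma_\ell'$ and $\sigma_\ell$ agree on $P_{\ell+1}(F)$. Since $\mathrm{O}_{V_{\ell+1}}(F)$ is generated by $P_{\ell+1}(F)$ and the Weyl element $w$, it suffices to identify the two actions of $w$ on a dense subspace.

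Take $\psi(t)=e^{2\pi i \mathrm{tr}_{F/\RR}(t)}$. We have $b_\ell=I(\tilde b_\ell)$ and $\widetilde{\mathcal{F}}_{X_\ell}(\tilde b_\ell)=\tilde b_\ell$, so $\mathcal{F}(b_\ell)=b_\ell=\mathcal{F}_{X_\ell}(b_\ell)$; the second equality holds because $b_\ell$ is the $K$-fixed vector of the minimal representation and $w\in K$. The map $I$ is continuous, and $I(\widetilde{\mathcal{S}})$ is a $\mathfrak{g}$-module through $\sigma_\ell'$. The infinitesimal actions of $\mathfrak{p}_{\ell+1}$ agree by \eqref{eq:Pequal}, and $\mathrm{Ad}(w)\mathfrak{p}_{\ell+1}+\mathfrak{p}_{\ell+1}=\mathfrak{g}$. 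Hence the opposite nilradical acts by $w\circ\mathfrak{n}\circ w^{-1}$ in both models. Applying $U(\mathfrak{g})$ to $b_\ell$ then identifies $\sigma_\ell'(U(\mathfrak g))b_\ell$ with $\sigma_\ell(U(\mathfrak g))b_\ell$. The latter is the space of $K$-finite vectors of the irreducible minimal representation, because $b_\ell$ is cyclic there. This needs care: I would run an induction on words in $U(\mathfrak g)$, using $w$-conjugation relations valid in both representations, and I could also use the explicit Bessel operators of \cite{Kobayashi:Mano, minrep:real} as in \cite[Example 7.10]{Hsu:asymptotics}.

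Next I will pass from $K$-finite vectors to all of $\mathcal{S}(X_\ell(F))$. The subspace $\widetilde{\mathcal{S}}(X_\ell(F))$ is a quotient of $\mathcal{S}(V_{\ell+1}(F))$. Its image under $I$ contains the $K$-finite vectors of $\sigma_\ell$, namely $I$ of $K$-finite Hermite-type functions. Conversely, $I$ maps the $K$-finite vectors of $\mathcal{S}(V_{\ell+1}(F))$ into that irreducible Harish-Chandra module, because the big theta lift of the trivial representation has $\sigma_\ell$ as a quotient. Continuity of $I$, together with the Casselman--Wallach property (the smooth globalization is the unique moderate-growth Fréchet completion), then gives $I(\widetilde{\mathcal{S}}(X_\ell(F)))=\mathcal{S}(X_\ell(F))$ and equivariance for all $\psi$. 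Changing $\psi$ only rescales by elements of $M_{\ell+1}(F)$, so the general case follows from the standard one. I expect this completion argument to be the main obstacle; the issue is closedness of the image. I will handle it by noting that $I$ factors through the Casselman--Wallach quotient, so its image is the full globalization.

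For the diagram with $\ell\ge 3$, I follow the nonarchimedean proof. For $c_\ell$ the argument splits into cases.

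In the case $F=\RR$ with $\ell$ odd, Lemma \ref{lem:easy:arch}(i) shows that $\tilde c_\ell$ vanishes on $I^{-1}(\mathcal{S}(X^\circ_\ell(F)))$. So $\tilde c_\ell$ descends to an $\mathrm{O}_{V_\ell}(F)$-invariant functional on the quotient in Theorem \ref{thm:asymplge3}. It agrees with evaluation at $0$, i.e. with $c_\ell$, on $I(W_\ell')$.

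Otherwise both $\tilde c_\ell$ and $c_\ell$ kill $I(W_\ell')$. The invariant functionals on the quotient by $\mathcal{S}_{\mathrm{ES}}(X_\ell(F))$ form a one-dimensional space: this quotient is generated by the $\log$-term and $\mathcal{S}(X_{\ell-1})$, and $\mathcal{S}(X_{\ell-1})$ has no invariant functional. Hence $\tilde c_\ell$ and $c_\ell\circ I$ are proportional, and Lemma \ref{lem:cequal} fixes the constant at $1$ using $b_\ell$.

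For $d_\ell$, Corollary \ref{eq:dvanish} and Theorem \ref{thm:asymplge3} give an equivariant continuous map $\mathcal{S}(X_{\ell-1}(F))\to\mathcal{S}(X_{\ell-1}(F))$. By irreducibility (Schur's lemma for the smooth globalization), this map is a scalar $C_\psi$. I compute $C_\psi$ exactly as in the nonarchimedean case, pairing with $\tilde c_{\ell-1}$ via Tate's functional equation \eqref{eq:Tate} and Lemma \ref{lem:ddescent}. This yields $C_\psi=\varepsilon(\ell-2,\psi)$, with the normalization \eqref{eq:dnormalize} checked on $b_\ell$ through $I(\tilde d_\ell(\tilde b_\ell))=b_{\ell-1}$ up to that $\varepsilon$-factor.
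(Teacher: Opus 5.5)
\textbf{First gap: the Harish-Chandra step is circular.} You want to identify $\sigma_\ell'$ with $\sigma_\ell$ on $U(\mathfrak{g})b_\ell$, but agreement on $\mathfrak{p}_{\ell+1}$ together with $\sigma_\ell'(w)b_\ell=\sigma_\ell(w)b_\ell$ does not show that $\bar{\mathfrak{n}}=\mathrm{Ad}(w)\mathfrak{n}$ acts the same way in both models. For $Y\in\mathfrak{n}$ you would need $\sigma_\ell'(w)=\sigma_\ell(w)$ on $\sigma_\ell(Y)b_\ell$. Each stage of an ``induction on words'' needs agreement of $w$ on the next layer, which is exactly what is being proved.

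What closes this is to use all of $K$, not just $w$. The function $\tilde b_\ell$ is the Gaussian, and it is $\mathcal{F}_2$-invariant, so it is $\tilde\sigma_\ell(K)$-fixed. Hence $b_\ell$ is $K$-fixed in \emph{both} models. Since $\mathrm{O}_{V_{\ell+1}}(F)=P_{\ell+1}(F)K$, \eqref{eq:Pequal} gives $\sigma_\ell'(pk)b_\ell=\sigma_\ell(p)b_\ell=\sigma_\ell(pk)b_\ell$. So the two actions agree on the span $\mathcal{V}$ of all translates of $b_\ell$. Infinitesimally, $\mathfrak{g}=\mathfrak{p}_{\ell+1}+\mathfrak{k}$ gives $U(\mathfrak{g})b_\ell=U(\mathfrak{p}_{\ell+1})b_\ell$. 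This is the spherical-vector/Iwasawa argument of Proposition \ref{prop:min:rep}.

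\textbf{Second, more serious gap: density.} Agreement on $\mathcal{V}\subseteq I(\widetilde{\mathcal{S}}(X_\ell(F)))\cap\mathcal{S}(X_\ell(F))$ gives neither $I(\widetilde{\mathcal{S}}(X_\ell(F)))\subseteq\mathcal{S}(X_\ell(F))$ nor equivariance on all of $I(\widetilde{\mathcal{S}}(X_\ell(F)))$. For that you need $\mathcal{V}$ to be dense in $I(\widetilde{\mathcal{S}}(X_\ell(F)))$ for its quotient topology, i.e.\ the $K$-finite vectors of the image of $I$ must all lie in $U(\mathfrak{g})b_\ell$.

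Your justification (``the big theta lift of the trivial representation has $\sigma_\ell$ as a quotient'') does not give this. $I$ is one specific map, and a priori its image could be a larger quotient of $\Theta(\mathrm{triv})$ that merely contains the minimal Harish-Chandra module. It would suffice to know that $\widetilde{\mathcal{S}}(X_\ell(F))$ is topologically generated by $\tilde b_\ell$; this can be extracted from Howe's joint harmonics. The Casselman--Wallach step then also needs both Fr\'echet topologies to dominate pointwise convergence on $X_\ell^\circ(F)$, so that the abstract isomorphism of globalizations is the identity on functions.

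\textbf{How the paper avoids this.} For $\ell\ge 3$ it proves $\mathcal{F}=\mathcal{F}_{X_\ell}$ directly on $\mathcal{S}_{\mathrm{ES}}(X_\ell(F))=I(W_\ell)$. It does so by comparing Theorems \ref{thm:Eis} and \ref{thm:Poisson:lift} over $\QQ$ or $\QQ[i]$, with auxiliary finite places chosen to kill the boundary terms. It then concludes algebraically from $I(\widetilde{\mathcal{S}})=I(W_\ell)+\mathcal{F}(I(W_\ell))$ and $\mathcal{F}^2=\mathrm{Id}$, so no generation or density question arises. For $\ell=2$ it transfers from $\ell=3$ via $\tilde d_3$ and uses unitarity to land in the smooth vectors.

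\textbf{A smaller point in your $d_\ell$ step.} In the archimedean case, $\mathcal{S}(X_\ell(F))/\mathcal{S}_{\mathrm{ES}}(X_\ell(F))$ also contains higher-order and (in most cases) logarithmic germ terms. So Corollary \ref{eq:dvanish} alone does not produce a map out of $\mathcal{S}(X_{\ell-1}(F))$. You also need that $I\circ\tilde d_\ell$ kills every $f$ with $I(f)=O(|v|^{2-\ell+\epsilon})$. This follows from Lemma \ref{lem:ddescent}, as in the proof of Proposition \ref{prop:case2:arch}(ii).
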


\begin{proof}
   Suppose $\ell\ge 3$. Let $\mathcal{F}$ be the operator in Lemma \ref{lem:des}. We claim $\mathcal{F}(f)=\mathcal{F}_{X_\ell}(f)$ for $f\in \mathcal{S}_{\mathrm{ES}}(X_\ell(F))$. Assuming the claim, we have as vector spaces
    \begin{align*}
        I(\widetilde{\mathcal{S}}(X_\ell(F)))&=I(W_\ell)+I(\widetilde{\mathcal{F}}_{X_\ell}(W_\ell))=\mathcal{S}_{\mathrm{ES}}(X_\ell(F))+\mathcal{F}(\mathcal{S}_{\mathrm{ES}}(X_\ell(F)))\\
        &=\mathcal{S}_{\mathrm{ES}}(X_\ell(F))+\mathcal{F}_{X_\ell}(\mathcal{S}_{\mathrm{ES}}(X_\ell(F)))=\mathcal{S}(X_\ell(F)).
    \end{align*}
    The map $I: \widetilde{\mathcal{S}}(X_\ell(F))\longrightarrow \mathcal{S}(X_\ell(F))$ is continuous by the closed graph theorem, so $I(\widetilde{\mathcal{S}}(X_\ell(F)))=\mathcal{S}(X_\ell(F))$ as Fr\'echet spaces by the open mapping theorem. Now as each $f\in \mathcal{S}(X_\ell(F))$ is of the form $f=f_1+\mathcal{F}(f_2)$ for some $f_1,f_2\in \mathcal{S}_{\mathrm{ES}}(X_\ell(F)),$ we have
    \begin{align*}
        \mathcal{F}(f)&=\mathcal{F}(f_1+\mathcal{F}(f_2))=\mathcal{F}(f_1)+f_2=\mathcal{F}_{X_\ell}(f_1)+f_2\\
        &=\mathcal{F}_{X_\ell}(f_1+\mathcal{F}_{X_\ell}(f_2))=\mathcal{F}_{X_\ell}(f_1+\mathcal{F}(f_2))=\mathcal{F}_{X_\ell}(f).
    \end{align*}
    Therefore, $\mathcal{F}=\mathcal{F}_{X_\ell}$ and thus $I$ is $\mathrm{O}_{V_{\ell+1}}(F)$-equivariant.

    To prove the claim, we use a global argument. If $F=\RR$ (resp. $\CC$), choose $E=\QQ$ (resp. $\QQ[i]$). Let $f_\infty\in \mathcal{S}_{\mathrm{ES}}(X_\ell(F))$ and $\xi\in X_\ell^\circ(E).$ Since the local theory over nonarchimedean local fields agrees by results in \S \ref{sec:agree}, as argued in Lemma \ref{lem:des} we can choose finite places $v_1,v_2,$ and $f_{v_1}\in \mathcal{S}(X_\ell^\circ(E_{v_1})),$ $f_{v_2}\in \mathcal{F}_{X_\ell}(\mathcal{S}(X_\ell^\circ(E_{v_2})))$ and $f^{\infty v_1 v_2}\in \mathcal{S}(X_\ell(\A_E^{\infty v_1v_2}))$  so that additional terms in Theorem \ref{thm:Eis} and Theorem \ref{thm:Poisson:lift} vanish, $\mathcal{F}_{X_\ell}(f_{v_1}f_{v_2}f^{\infty v_1v_2})(\xi)\neq 0$, and
    \begin{align*}
        \mathcal{F}_{X_\ell}(f_\infty)\mathcal{F}_{X_\ell}(f_{v_1}f_{v_2}f^{\infty v_1v_2})(\xi)=\sum_{x\in X_\ell^\circ(E)}f_\infty f_{v_1}f_{v_2}f^{\infty v_1v_2}(x)=\mathcal{F}(f_\infty)\mathcal{F}_{X_\ell}(f_{v_1}f_{v_2}f^{\infty v_1v_2})(\xi).
    \end{align*}
    Therefore, $\mathcal{F}_{X_\ell}(f_\infty)(\xi)=\mathcal{F}(f_\infty)(\xi)$ for $\xi\in X_\ell^\circ(E).$ Since $X_\ell^\circ(E)$ is dense in $X_\ell^\circ(F),$ the claim follows by continuity. 

    For $\ell=2,$ by Theorem \ref{thm:asymplge3} and Corollary \ref{eq:dvanish} we have a continuous map
    \begin{align*}
        J:\mathcal{S}(X_{2}(F))\longrightarrow \mathcal{S}(X_{3}(F))/\mathcal{S}_{\mathrm{ES}}(X_{3}(F)) &\cong \widetilde{\mathcal{S}}(X_{3}(F))/W_3'\\
        &\xrightarrow{\,\,\tilde{d}_3\,\,} \widetilde{\mathcal{S}}(X_{2}(F))/\tilde{d}_3(W_3')\xrightarrow{\,\,I\,\,} I(\widetilde{\mathcal{S}}(X_{2}(F))). 
    \end{align*}
    We may assume $\psi(t)=e^{2\pi i\mathrm{tr}_{F/\RR}(t)}.$ Then $J(b_2)=b_2$ and $J\circ \sigma_2=\sigma_2'\circ J$. Since $\mathcal{S}(X_{2}(F))$ is irreducible, by the Iwasawa decomposition $J$ is the inclusion map. Therefore, $\mathcal{F}$ and $\mathcal{F}_{X_{2}}$ agree on $\mathcal{S}(X_{2}(F))$. In particular, $\mathcal{F}$ is unitary and thus $\mathcal{F}=\mathcal{F}_{X_{2}}$ on $L^2(X_{2}(F)).$ Since $I(\widetilde{\mathcal{S}}(X_{2}(F)))$ is contained in the set of smooth vectors $\mathcal{S}(X_{2}(F))$ in $L^2(X_{2}(F)),$ we have $J$ is the identity map. This justifies the first statement.

    The second assertion can be proved similarly as in  Proposition \ref{prop:min:rep} using Theorem \ref{thm:asymplge3}, Lemma \ref{lem:easy:arch} and Lemma \ref{lem:cequal}. We leave it to the reader.
\end{proof}

We equip $\mathcal{S}(X_{1}(F)):=I(\widetilde{\mathcal{S}}(X_1(F)))$ with the quotient topology so that it is a Fr\'echet space.

\begin{prop}\label{prop:case1:arch} 
  We have natural $\mathrm{O}_{V_1}(F)$-equivariant maps
    \begin{align*}
        \mathcal{S}(X_{1}(F))\xrightarrow{( c_1, (d_1',d_1))} \CC\oplus \mathrm{Ind}_{\mathrm{SO}_{V_1}}^{\mathrm{O}_{V_1}}\CC_1,
    \end{align*}
    where $(c_1,d_1',d_1)$ are normalized so that
    \begin{align*}
        (d_1',d_1)(b_1)=(1,1), \quad c_1(b_1)=\frac{b_1(0)}{\zeta(1)}=1.
    \end{align*}
    Furthermore, we have commutative diagrams of $\mathrm{SO}_{V_1}(F)$-modules
     \begin{align*}
    \begin{minipage}{0.48\textwidth}
    \vspace{0pt}
    \[
    \begin{CD}
    \widetilde{\mathcal{S}}(X_1(F)) @>{\quad\quad(\tilde{c}_1,\tilde{d}_{1})\quad\quad}>> \CC\oplus \widetilde{\mathcal{S}}(X_0(F))\\
    @V{I}VV @V{(\mathrm{id},I)}VV
    \\
    \mathcal{S}(X_{1}(F)) @>{(c_1,\varepsilon(-1,\psi)\zeta(2)d_{1})}>>  \CC\oplus \CC_{1}
    \end{CD}
\]
\end{minipage}
\hfill
\begin{minipage}{0.48\textwidth}
\vspace{6pt}
\[
    \begin{CD}
    \widetilde{\mathcal{S}}(X_1(F)) @>{\quad\quad \tilde{d}_{1}\quad\quad}>> \widetilde{\mathcal{S}}(X_0(F))\\
    @V{I}VV @V{\mathrm{ev}}VV
    \\
    \mathcal{S}(X_{1}(F)) @>{\quad\quad \varepsilon(-1/2,\psi)d_{1}'\quad\quad}>>  \CC_{1}.
    \end{CD}
    \]
\end{minipage}
    \end{align*}
    
    \medskip
    \noindent Here $\mathrm{ev}$ is the evaluation map of functions on $F^2$ at $(0,0).$
\end{prop}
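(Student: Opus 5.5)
We will follow the nonarchimedean proof of Proposition \ref{prop:case1}, replacing the finite-level computation \eqref{eq:2comp} with an archimedean asymptotic expansion. The analytic input for this is the same as in Lemma \ref{lem:easy:arch}.

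\textbf{Step 1: asymptotic expansion of $I(f)$ along the two axes.} By continuity of $I$ and of the maps involved, it suffices to treat $f\in\mathcal{S}(V_2(F))$ that is $r_1(K)$-invariant and a pure tensor. Then
\begin{align*}
I(f)(x)=\int_{F^\times}|t|\,f(t^{-1}x,0,t)\,d^\times t .
\end{align*}
Take $x=(0,a)$, resp.\ $x=(a,0)$, with $|a|\to 0$. We split the $t$-integral at $|t|=r$ and Taylor expand $f(\cdot,0,t)$ in $t$ to first order near $0$, exactly as in the proof of Lemma \ref{lem:easy:arch}(i) with $\ell=1$. This should give
\begin{align*}
I(f)(ax)=I(f)(0)+|a|\,D_x(f)+o(|a|),
\end{align*}
where $D_x(f)$ is a convergent integral depending only on the axis containing $x$ (after using the $K_{\GG_m}$-action to normalize $x$ within that axis). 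We then define $c_1(I(f)):=I(f)(0)/\zeta(1)$, which equals $\tilde c_1(f)$ by Lemma \ref{lem:easy:arch}(iii). We define
\begin{align*}
d_1'(I(f)):=C\lim_{|a|\to0}|a|^{-1}\big(I(f)(a,0)-I(f)(0)\big),
\end{align*}
and $d_1$ by the same formula using the second axis. The constant $C$ is fixed by requiring $(d_1',d_1)(b_1)=(1,1)$; this is possible because $b_1=I(\tilde b_1)$ is an explicit Bessel-type function by the computation preceding Lemma \ref{lem:ddescent}. These maps depend only on $I(f)$ by construction. Equivariance follows from Proposition \ref{prop:action}: the torus $\mathrm{diag}(a,a^{-1})$ scales the two axes oppositely, giving the characters $|a|^{\pm1}$, and the swap exchanges the axes. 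This establishes the first assertion.

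\textbf{Step 2: the left diagram.} By Corollary \ref{eq:dvanish}, $I\circ\tilde d_1$ kills $W_1'$. Since $(d_1',d_1)$ also vanishes on $\mathcal{S}_{\mathrm{ES}}(X_1(F))$, the map $I\circ\tilde d_1$ factors through $\mathcal{S}(X_1(F))/\mathcal{S}_{\mathrm{ES}}(X_1(F))$. Its image is one-dimensional, and one checks directly that $\tilde d_1\circ\tilde\sigma_1(\mathrm{diag}(a,a^{-1}))=|a|^{-1}\tilde d_1$. The $\mathrm{SO}_{V_1}(F)$-equivariance then forces $I\circ\tilde d_1=C_\psi\,d_1\circ I$ for a constant $C_\psi$. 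To make this factorization rigorous we can argue with the explicit integral rather than with irreducibility: Lemma \ref{lem:ddescent} gives
\begin{align*}
I(\tilde d_1(f))=\lim_{|a|\to0}|a|^{-1}\int_F I(f)(0,ay)\psi(y)\,dy,
\end{align*}
and substituting the Step 1 expansion shows that the constant term integrates to a delta contribution which vanishes in the limit. What remains is proportional to the coefficient $D_{(0,1)}$. Evaluating at $f=\tilde b_1$ with $\psi(t)=e^{2\pi i\mathrm{tr}_{F/\RR}(t)}$ gives $C_\psi=\zeta(2)$, since $\tilde d_1(\tilde b_1)(0)=\zeta(2)$ from the Gaussian. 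For general $\psi$, the change of variables $y\mapsto zy$ used in Proposition \ref{prop:case1} produces the factor $\varepsilon(-1,\psi)$.

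\textbf{Step 3: the right diagram.} We repeat verbatim the Fourier inversion computation at the end of the proof of Proposition \ref{prop:case1}. It expresses $\tilde d_1(f)(0)$ as $(\zeta(2)\varepsilon(0,\psi))^{-1}\,I\circ\tilde d_1\circ\tilde\sigma_1\begin{psmatrix}0&1\\1&0\end{psmatrix}(f)$, and the left diagram then converts this to $\varepsilon(-1/2,\psi)\,d_1'\circ I(f)$. Only the measure normalization $d\dot g$ enters here, and that normalization is common to both the archimedean and nonarchimedean cases.

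\textbf{Main obstacle.} The hardest step is Step 1, specifically justifying the expansion and the interchange of the limit $|a|\to0$ with the $y$-integral in Step 2. In the archimedean case the error terms are not eventually zero as in \eqref{eq:2comp}. We must therefore show that the remainder is $o(|a|)$ uniformly enough to be integrated against $\psi(y)\,dy$. We would try dominated convergence with a majorant of the form $|t|^{-1}f'(tx,t^{-1})$, as in the proof of Lemma \ref{lem:ddescent}. An alternative is to bypass the expansion and define $d_1$ directly as $\varepsilon(-1,\psi)^{-1}\zeta(2)^{-1}I\circ\tilde d_1$; this is well defined by Corollary \ref{eq:dvanish} and Lemma \ref{lem:ddescent}. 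With that definition one only needs to check the normalization at $b_1$, and define $d_1'$ as $d_1$ composed with the swap.
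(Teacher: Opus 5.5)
\textbf{The gap is in Step 1, at a complex place.} Take $F=\CC$ and $f$ that is $r_1(K)$-invariant. Then $t\mapsto f(0,t,0,0)$ is $K_{\GG_m}$-invariant, so near $t=0$ it equals $f(0)+\beta|t|+O(|t|^2)$, with $|t|=t\bar t$ and $\beta=\partial_t\partial_{\bar t}f(0,t,0,0)|_{t=0}$. After the substitutions $t\mapsto t^{-1}$ and $t\mapsto t/a$ one has
\begin{align*}
I(f)(0,a)-I(f)(0)=|a|\int_{F^\times}|t|^{-1}\big(f(0,t,0,at^{-1})-f(0,0,0,at^{-1})\big)\,d^\times t .
\end{align*}
The integrand is essentially $\beta$ on the range $|a|\lesssim|t|\le 1$ and negligible for $|t|\ll|a|$. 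So the integral grows like a nonzero multiple of $\beta\log|a|$, and the leading correction is of order $|a|\log|a|$, not $|a|$. Your candidate coefficient $\int_{F^\times}|t|^{-1}(f(0,t,0,0)-f(0))\,d^\times t$ diverges at $t=0$.

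This already happens for $\tilde b_1$, where $\beta=-2\pi\neq 0$. Equivalently, $b_1$ is a multiple of $|x|_\RR K_1(4\pi|x|_\RR)$, whose expansion has a nonzero $|x|\log|x|$ term. Consequences over $\CC$:
\begin{itemize}
\item the limit $\lim_{|a|\to0}|a|^{-1}(I(f)(a,0)-I(f)(0))$ defining your $d_1'$ (and likewise $d_1$) does not exist;
\item no constant $C$ gives $(d_1',d_1)(b_1)=(1,1)$;
\item Step 2, which identifies $I\circ\tilde d_1$ with $D_{(0,1)}$, fails too.
\end{itemize}
This reflects the fact that $\zeta_\CC(s)$ has a pole at $s=-1$ while $\zeta_\RR(s)$ does not. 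It is the same phenomenon as the $-\log|x|$ terms in Theorem \ref{thm:asymplge3}.

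\textbf{What is missing} is the paper's use of Tate's functional equation \eqref{eq:Tate}. For $K$-invariant $f$,
\begin{align*}
I(\tilde d_1(f))=\int_{F^\times}|t|^2\Big(\int_F f(0,y,0,0)\psi(ty)\,dy\Big)d^\times t=\zeta(2)\varepsilon(-1,\psi)\lim_{s\to-1}\frac{1}{\zeta(s)}\int_{F^\times}|t|^{s}f(0,t,0,0)\,d^\times t .
\end{align*}
Over $\RR$ the limit is $\zeta(-1)^{-1}\int_{F^\times}|t|^{-1}(f(0,t,0,0)-f(0))\,d^\times t$. Over $\CC$ it is a constant multiple of $\beta/\mathrm{Res}_{s=-1}\zeta(s)$. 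Each is then matched, by dominated convergence and $t\mapsto ta$, with the coefficient of $|a|$ (resp.\ $-|a|\log|a|$) in the $K_{\GG_m}$-averaged expansion of $I(f)(0,a)$.

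Defining $d_1$ as that coefficient, with the factor $\zeta(-1)^{-1}$ (resp.\ $(\mathrm{Res}_{s=-1}\zeta(s))^{-1}$), gives a $\psi$-independent germ map for which the left diagram commutes. It also removes the interchange of $\lim_{|a|\to0}$ with $\int_F\cdots\psi(y)\,dy$ that you flagged. Even over $\RR$, a pointwise $o(|a|)$ remainder would not control that interchange without decay of its Fourier transform.

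Your Step 3 is the paper's argument and is fine.

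Your fallback $d_1:=(\varepsilon(-1,\psi)\zeta(2))^{-1}I\circ\tilde d_1$ makes the left diagram true by definition. However, you would still need to prove it is independent of $\psi$, so that $d_1(b_1)=1$ for every $\psi$ and not just the standard one. More importantly, it abandons the actual content of the proposition: that $d_1,d_1'$ are the asymptotic coefficients of $I(f)$ at the origin along the two axes.

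Minor slip: $\tilde d_1(\tilde b_1)(0)=1$; the value $\zeta(2)$ is $I(\tilde d_1(\tilde b_1))$.
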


\begin{proof}
Let $f\in \mathcal{S}(V_2(F))$ that is $r_1(K)$-invariant. By \eqref{eq:Tate}, we have
\begin{align*}
    I(\tilde{d}_1(f))&=\int_{F^\times}\int_F|t|^{2}f(0,y,0,0)\psi(ty)dy d^\times t\\
    &=\zeta(2)\varepsilon(-1,\psi)\lim_{s\to -1}\frac{1}{\zeta(s)}\int_{F^\times} |t|^{s}f(0,t,0,0) d^\times t.
\end{align*}
Since $f$ is $r_1(K)$-invariant, the expression above is invariant under changing variables $t\mapsto tc$ for $c\in K_{\GG_m}.$ Thus we may assume $f(0,\cdot,0,\cdot)$ is invariant under $K_{\GG_m}$ in both entries.

Assume $F=\RR$. Then
\begin{align*}
    &\int_{F^\times} |t|^{s}f(0,t,0,0) d^\times t\Bigg|_{s=-1}\\
    &=\int_{|t|\ge 1} |t|^{-1}f(0,t,0,0)d^\times t-\mathrm{vol}(K_{\GG_m})f(0)+\int_{|t|<1} |t|^{-1}(f(0,t,0,0)-f(0))d^\times t\\
    &= \int_{F^\times} |t|^{-1}f(0,t,0,0)-|t|^{-1}f(0) d^\times t.
\end{align*}
Here we have used the fact that $f(0,t,0,0)-f(0)=O(|t|^2)$ for $|t|<r$ small. Write this as
\begin{align*}
    \int_{F^\times} \lim_{|a|\to 0} |t|^{-1}\left(f(0,t,0,at^{-1})-f(0,0,0,at^{-1})\right) d^\times t.
\end{align*} Then we can apply Lebesgue's dominated convergence theorem to see this is
\begin{align*}
    &\lim_{|a|\to 0}\int_{F^\times} |t|^{-1}\left(f(0,t,0,at^{-1})-f(0,0,0,at^{-1})\right) d^\times t=\lim_{|a|\to 0}\frac{1}{|a|} \left(I(f)(0,a)-I(f)(0)\right),
\end{align*}
where the last equality follows from changing variables $t\mapsto ta.$ We define
\begin{align*}
    d_1(I(f)):=\frac{1}{\zeta(-1)}\lim_{|a|\to 0}\frac{1}{|a|} \left(\frac{I(f)(0,a)+I(f)(0,-a)}{2}-I(f)(0)\right).
\end{align*}

Now suppose $F=\CC.$ Let 
\begin{align*}
    b_{1}:=\frac{\partial^2}{\partial t\partial \bar{t}}f(0,t,0,0)\bigg|_{t=0}.
\end{align*}
Then using $f(0,t,0,0)=f(0)+b_1|t|+O(|t|^2)$ for $|t|$ small, we have
\begin{align*}
    &\lim_{s\to -1}\frac{1}{\zeta(s)}\int_{F^\times} |t|^{s}f(0,t,0,0) d^\times t\\
    &=\lim_{s\to -1} \frac{1}{\zeta(s)}\int_{|t|<1} |t|^{s}f(0,t,0,0) d^\times t\\
    &=\lim_{s\to -1} \frac{1}{\zeta(s)}\int_{|t|<1} |t|^{s}\left(f(0,t,0,0)-f(0)-b_1|t|\right)+f(0)|t|^s+b_1|t|^{s+1} d^\times t\\
    &=\frac{1}{2\mathrm{Res}_{s=-1}\zeta(s)} b_1.
\end{align*}
On the other hand, using again the fact that $f(0,a,0,t^{-1})-f(0,0,0,t^{-1})=O(|a|)g(t^{-1})$ for some $g\in \mathcal{S}(F)$ for $0<|a|<1$ sufficiently small, we have
\begin{align*}
    &\frac{1}{-|a|\log |a|}\left(I(f)(0,a)-I(f)(0\right)) \\&=\frac{1}{-|a|\log |a|}\int_{F^\times}|t|^{-1}\left(f(0,at,0,t^{-1})-f(0,0,0,t^{-1})\right) d^\times t\\
    &=\frac{1}{-\log |a|}\int_{F^\times}|t|^{-1}\left(f(0,t,0,at^{-1})-f(0,0,0,at^{-1})\right) d^\times t\\
    &=\frac{1}{-\log |a|}\int_{|a|<|t|<1}|t|^{-1}\left(f(0,t,0,at^{-1})-f(0,0,0,at^{-1})\right) d^\times t+O\left(\frac{1}{-\log |a|}\right).
\end{align*}
The limit as $|a|\to 0$ exists, which equals
\begin{align*}
    &\lim_{|a|\to 0}\frac{1}{-\log |a|}\int_{|a|<|t|<1}\frac{\partial^2}{\partial v_2\partial \bar{v}_2}f(0,0,0,at^{-1})d^\times t\\
    &=\lim_{|a|\to 0}\frac{1}{-\log |a|}\int_{|a|<|t|<1}\frac{\partial^2}{\partial v_2\partial \bar{v}_2}f(0,0,0,t)d^\times t=b_1/2.
\end{align*}
We define
\begin{align*}
    d_1(I(f)):=\frac{1}{\mathrm{Res}_{s=-1} \zeta(s)}\lim_{|a|\to 0}\frac{1}{-|a|\log |a|} \left(\frac{\int_{K_{\GG_m}}I(f)(0,ac)dc}{\mathrm{vol}(K_{\GG_m})}-I(f)(0)\right).
\end{align*}
Then in both cases, by the definition of $d_1$ we have $I\circ \tilde{d}_1=\varepsilon(-1,\psi)\zeta(2)d_1\circ I.$

The map $d_1'$ can be defined analogously by considering the asymptotic expansion of $I(f)(a,0)$. The same proof as in Proposition \ref{prop:case1} implies $\mathrm{ev}\circ\tilde{d}_1=\varepsilon(-1/2,\psi)d_1'\circ I.$ Define
\begin{align*}
    c_1(I(f))&:=\frac{1}{\zeta(1)}I(f)(0).
\end{align*}
Then $c_1(I(f))=\tilde{c}_1(f)$ by Lemma \ref{lem:easy:arch}(iii). This completes the proof.

\end{proof}

We continue to use notations in the discussion preceding Proposition \ref{prop:case2} for the following proposition.

\begin{prop}\label{prop:case2:arch}
\begin{enumerate}
\item[(i)]
We have a natural $\mathrm{O}_{V_2}(F)$-equivariant map
    \begin{align*}
       \mathcal{S}(X_{2}(F))/\mathcal{S}_{\mathrm{ES}}(X_{2}(F))\longrightarrow \pi \CC[[\mathrm{Res}_{F/\RR}X_{2}]]
    \end{align*}
    where $\pi$ is of length $2$ and admits a nonsplit exact sequence of $\mathrm{O}_{V_2}(F)$-modules
    \begin{align*}
        0\longrightarrow \sigma \longrightarrow \pi \longrightarrow \CC\longrightarrow 0.
    \end{align*}
    Here the map $\pi\to \CC$ is given by taking the highest order term in the germ expansion.
    Let 
    \begin{align*}
      c_2:  \mathcal{S}(X_{2}(F))\xrightarrow{\,\,d_2\,\,}\pi\lto \CC.
    \end{align*}
    Then $c_2(I(f))=\tilde{c}_2(f)$.
    
   \item[(ii)] We have an isomorphism $J:\pi\cong \mathcal{S}(X_{1}(F))$ of $P_2(F)$-representations such that $J(d_2(b_2))=b_1$. Under this identification we have a commutative diagram of  $P_2(F)$-representations: $$
    \begin{CD}
    \widetilde{\mathcal{S}}(X_2(F)) @>{\tilde{d}_{2}}>> \widetilde{\mathcal{S}}(X_{1}(F))\\
    @V{I}VV @V{I}VV
    \\
    \mathcal{S}(X_{2}(F)) @>{\varepsilon(0,\psi)d_{2}}>>  \mathcal{S}(X_1(F)).
    \end{CD}
    $$
    \end{enumerate}
\end{prop}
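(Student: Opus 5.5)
Our plan is to follow the proof of Proposition \ref{prop:case2} step by step. Where that proof used exact germ computations and Lemma \ref{lem:sl2}, we substitute the archimedean asymptotics of Lemma \ref{lem:easy:arch}(ii) and the corresponding archimedean intertwining operator facts.

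\emph{Step 1: the germ map and $c_2$.} By Proposition \ref{prop:min:rep:arch} we have $\mathcal{S}(X_2(F))=I(\widetilde{\mathcal{S}}(X_2(F)))$. Lemma \ref{lem:easy:arch}(ii) then gives, for $x\in X_2(F)^1$,
\begin{align*}
I(f)(ax)=-\tilde{c}_2(f)\log|a|+\tilde{a}_2(f)(x)+o(1).
\end{align*}
We refine this to a full asymptotic expansion in $\CC[[\mathrm{Res}_{F/\RR}X_2]]$. To do so, we apply the same decomposition of $I(f)(ax)$ used in that lemma, but Taylor expand $f(t^{-1}ax,0,t)$ and $f(v,0,0)$ to higher order, exactly as in the $\ell\ge3$ part of Lemma \ref{lem:easy:arch}.

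Taking the leading part gives a map $I(f)\mapsto \tilde a_2(f)(\cdot)-\tilde c_2(f)\log|\cdot|$. Its kernel on leading terms contains $\mathcal{S}_{\mathrm{ES}}$-contributions, and its image $\pi''$ sits in an extension of $\CC$ (the $\log$ coefficient) by $\pi'\subset C^\infty(\mathbb{P}X_2(F))=\mathrm{Ind}_B^{\SL_2}(1_{-1})^{\otimes2}$. Both $\tilde c_2$ and $\tilde a_2$ depend only on $I(f)$, so the map is well defined. We define $c_2$ as the $\log$ coefficient, and $c_2(I(f))=\tilde c_2(f)$ holds by construction.

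Next we check that $\pi'$ contains $\CC\otimes\CC$, $\mathrm{St}\otimes\CC$ and $\CC\otimes\mathrm{St}$. For this it suffices to exhibit $f$ with $\tilde c_2(f)=0$ and $\tilde a_2(f)$ nonconstant. Formula \eqref{eq:a2:arch} makes this easy: take $f$ vanishing at $0$ with $f(tx,0,0)$ depending on the direction $x$.

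\emph{Step 2: $\pi'$ is proper.} We take Mellin transforms $f_s$ along $1_s\otimes1_s$ and show that $M_{w_0}(s)\otimes M_{w_0}(s)(f_s)$ is holomorphic at $s=-1$ for every $f\in\mathcal{S}(X_2(F))$. The input is that $b_2=I(\tilde b_2)$ has constant $\tilde a_2$: by \eqref{eq:a2:arch}, $\tilde b_2$ is radial, so $\tilde a_2(\tilde b_2)$ is constant. We then use that $b_2$ generates $\mathcal{S}(X_2(F))$ as a $(\mathfrak g,K)$-module, by irreducibility of the minimal representation together with continuity.

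The main obstacle is the archimedean substitute for Lemma \ref{lem:sl2}. We need that $\mathrm{Ind}_B^{\SL_2}(1_{-1})$ has $\CC$ as its unique irreducible subrepresentation, equal to $\ker M_{w_0}(-1)$, with the infinite-dimensional quotient (discrete series sum for $\RR$, the corresponding module for $\CC$) playing the role of $\mathrm{St}$. We would verify this on $K$-types, where $M_{w_0}(s)$ acts by explicit Gamma-ratio scalars. Once this holds, properness follows: otherwise some $\tilde a_2$ would not be killed, producing a simple pole. We then set $\pi=\pi''/(\CC\otimes\CC)$. Non-splitness of $0\to\sigma\to\pi\to\CC\to0$ follows because $\CC\otimes\CC$ is the maximal semisimple subrepresentation of $\pi''$. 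The full expansion only adds the formal factor $\CC[[\mathrm{Res}_{F/\RR}X_2]]$, since higher-order terms are governed by the same leading data.

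\emph{Step 3: part (ii).} Corollary \ref{eq:dvanish} yields a continuous $P_2(F)$-equivariant surjection
\begin{align*}
\tilde J:\pi\cong\mathcal{S}(X_2(F))/\mathcal{S}_{\mathrm{ES}}(X_2(F))\cong\widetilde{\mathcal{S}}(X_2(F))/W_2'\xrightarrow{\tilde d_2}\widetilde{\mathcal{S}}(X_1(F))/\tilde d_2(W_2')\xrightarrow{I}\mathcal{S}(X_1(F)).
\end{align*}
Injectivity of $\tilde J$ follows from the $P_2(F)$-composition structure of $\pi$, as in the nonarchimedean case. For $\psi(t)=e^{2\pi i\mathrm{tr}(t)}$, we have $\tilde d_2(\tilde b_2)=\tilde b_1$ up to normalization, which fixes $J(d_2(b_2))=b_1$. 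For general $\psi$, we use Lemma \ref{lem:ddescent} and rescale $\psi_u(y)=\psi(zy)$ as in Proposition \ref{prop:case2}. This produces the factor $|z|^{1/2}=\varepsilon(0,\psi)$.
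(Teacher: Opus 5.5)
Your treatment of (i) follows the paper's outline; the paper's own proof also just argues as in Proposition \ref{prop:case2}. Reading $c_2(I(f))=\tilde c_2(f)$ directly off the $\log$-coefficient in Lemma \ref{lem:easy:arch}(ii) is fine, in place of the paper's route via one-dimensionality of invariant functionals plus Lemma \ref{lem:cequal}.

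The genuine gap is in Step 3. There you carry over the nonarchimedean chain $\pi\cong\mathcal{S}(X_2(F))/\mathcal{S}_{\mathrm{ES}}(X_2(F))\cong\widetilde{\mathcal{S}}(X_2(F))/W_2'$. Over $\RR$ or $\CC$ the first isomorphism is false, which is exactly why (i) only maps this quotient to $\pi\CC[[\mathrm{Res}_{F/\RR}X_2]]$. Here is a concrete counterexample.
\begin{itemize}
\item $\mathcal{S}(X_2(F))$ is stable under multiplication by a coordinate function $x_1$ (differentiate the $N_3(F)$-action).
\item $b_2$ has a nonzero $\log|x|$ term (Lemma \ref{lem:cequal}). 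Hence $x_1b_2$ is not the restriction of a smooth function on $V_2(F)$: otherwise $x_1b_2(ax_0)/a$ would stay bounded as $a\to0$ along a ray with $x_{0,1}\neq0$.
\item Yet $x_1b_2(ax)\to0$ as $|a|\to 0$, so its image in $\pi$ is zero.
\end{itemize}
So your claim that the higher-order terms are governed by the leading data fails. Corollary \ref{eq:dvanish} only shows that $I\circ\tilde d_2$ factors through $\mathcal{S}(X_2(F))/\mathcal{S}_{\mathrm{ES}}(X_2(F))$, not through $d_2\colon\mathcal{S}(X_2(F))\to\pi$. As written, $\tilde J$ is not defined on $\pi$, and the diagram in (ii) is not established.

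What is missing is the paper's vanishing estimate based on Lemma \ref{lem:ddescent}. Suppose $I(f)(v)=O(|v|^{\epsilon})$ as $|v|\to0$ for some $\epsilon>0$. Write $\int_F I(f)(ax,0,ay)\psi(y)\,dy=|a|^{-1}\int_F I(f)(ax,0,y)\psi(a^{-1}y)\,dy$ and split at $|y|=|a|^{1-\delta}$ with $0<\delta<\epsilon/(1+\epsilon)$. Using the Schwartz-type decay of $y\mapsto I(f)(x,0,y)$, one gets $O(|a|^{-\delta+\epsilon(1-\delta)})\to0$, hence $I(\tilde d_2(f))=0$. Combined with Corollary \ref{eq:dvanish} and the expansion from (i), this shows $I\circ\tilde d_2$ vanishes on every $f$ with $d_2(I(f))=0$. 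Only then does it descend to $\tilde J\colon\pi\to\mathcal{S}(X_1(F))$.

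Injectivity also cannot be obtained ``as in the nonarchimedean case''. That length argument is algebraic, and $\pi$ has no finite algebraic $P_2(F)$-composition series. The paper instead proceeds as follows:
\begin{itemize}
\item It takes the $P_2(F)$-stable subspace $\sigma'\subset\sigma$ homeomorphic to $\mathcal{S}((F^\times)^2)$.
\item It shows $\sigma'$ is the unique topologically irreducible $P_2(F)$-subrepresentation of $\sigma$, by Fourier inversion as in Lemma \ref{lem:Iact} together with $N_2(F)$ acting trivially on $\sigma/\sigma'$.
\item It checks with the same limit formula that $\tilde J(\sigma')=\mathcal{S}((F^\times)^2)$.
\end{itemize}
With these two steps added, your normalization via $\tilde b_2$ (in fact $\tilde d_2(\tilde b_2)=\tilde b_1$ exactly) and the rescaling of $\psi$ go through.
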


\begin{proof}
     Arguing as in Proposition \ref{prop:case2}, by Lemma \ref{lem:easy:arch}(ii) and Proposition \ref{prop:min:rep:arch}, by taking germs at the origin of functions in $\mathcal{S}(X_{2}(F))$ we have a natural  map 
    \begin{align*}
          \mathcal{S}(X_{2}(F))/\mathcal{S}(X_{2}^\circ(F))&\longrightarrow \pi''\\
          I(f)&\mapsto \tilde{a}_2(f)(\cdot)-\tilde{c}_2(f)\log|\cdot|.
    \end{align*}
  The representation $\pi''$ fits into the following nonsplit exact sequence of $\mathrm{O}_{V_2}(F)$-representations
    \begin{align*}
        &0 \longrightarrow \pi' \longrightarrow \pi''\longrightarrow \CC\longrightarrow 0,\\
       & 0 \longrightarrow \CC \longrightarrow \pi'\longrightarrow \mathrm{Ind}_{\mathrm{SO}_{V_2}}^{\mathrm{O}_{V_2}}(\mathrm{St} \otimes \CC)\longrightarrow 0.
    \end{align*}
    Take $\pi:=\pi''/\CC\otimes\CC.$ By the differential action of $\mathrm{Lie}\,\mathrm{O}_{V_3}(F)$ on the minimal representation  \cite{Kobayashi:Mano, minrep:real}, we have $\mathcal{S}(X_2(F))$ is stable under multiplication by the coordinate functions $\CC[\mathrm{Res}_{F/\RR}X_{2}],$ so we can extend the results by tensoring $\CC[[\mathrm{Res}_{F/\RR}X_{2}]]$ at the germs. 
    
    By the asymptotics in Lemma \ref{lem:easy:arch}(ii) and well-known results on principal series of $\SL_2(F)$, the space of continuous $\mathrm{O}_{V_2}(F)$-invariant linear functionals $\mathcal{S}(X_2(F))/\mathcal{S}(X_2^\circ(F))\rightarrow \CC$ is one-dimensional (see also the remark below). Therefore, $\tilde{c}_2(f)=c_2(I(f))$
    by Lemma \ref{lem:cequal}. This proves (i).
    
    For (ii) by Lemma \ref{lem:ddescent} for $f\in \widetilde{\mathcal{S}}(X_{2}(F))$ and $x\in X_{1}^\circ(F),$
         \begin{align}\label{eq:compute}
        I(\tilde{d}_2(f))(x)=\lim_{|a|\to 0} \int_{F} I(f)(ax,0,ay) \psi(y)dy.
    \end{align}
    This vanishes if $f\in W_{2}'$ by Corollary \ref{eq:dvanish}. Furthermore, if $I(f)(v)=O(|v|^{\epsilon})$ as $|v|\to 0$ for some $\epsilon>0,$ then since $I(f)(x,0,y)$ as a function in $y$ behaves like a Schwartz function in $\mathcal{S}(F)$ as $|y|\to \infty$, we have for $\epsilon/(\epsilon+1)>\delta>0$
    \begin{align*}
        &\int_{F} I(f)(ax,0,ay) \psi(y)dy\\
        &=|a|^{-1}\left(\int_{|y|\le |a|^{1-\delta}} I(f)(ax,0,y)\psi(a^{-1}y)dy+\int_{|a|^{1-\delta}<|y|} I(f)(ax,0,y)\psi(a^{-1}y)dy\right)\\
        &=O(|a|^{-\delta+\epsilon(1-\delta)})
    \end{align*}
    converges to $0$ as $|a|\to 0.$ Thus $I(\tilde{d}_2(f))=0,$ and the map $I\circ \tilde{d}_2$ factors through 
    \begin{align*}
        \widetilde{\mathcal{S}}(X_2(F))\xrightarrow{\quad I\quad} \mathcal{S}(X_2(F))\xrightarrow{\quad  d_2\quad } \pi.
    \end{align*}
    Hence we have an induced $P_2(F)$-equivariant surjection $\tilde{J}:\pi\to \mathcal{S}(X_1(F)).$ 

    Elements in $\sigma$ can be viewed as functions on $(\mathbb{P}^1(F)\times \{\mathrm{pt}\})\cup (\{\mathrm{pt}\}\times\mathbb{P}(F))$. In particular, $\sigma$ contains a $P_2(F)$-stable subspace $\sigma'$ homeomorphic to $\mathcal{S}((N(F)-\{0\})^2)=\mathcal{S}((F^\times)^2)$. Arguing similarly as in Lemma \ref{lem:Iact}, by Fourier inversion and the fact that $N_2(F)$ acts trivially on $\sigma/\sigma',$ we have $\sigma'$ is the unique (topologically) irreducible $P_2(F)$-subrepresentation of $\sigma$. Furthermore, by \eqref{eq:compute} $\tilde{J}(\sigma')=\mathcal{S}((F^\times)^2).$ Thus $\tilde{J}$ is injective and hence an isomorphism. When $\psi(t)=e^{2\pi i\mathrm{tr}_{F/\RR}(t)},$ $\tilde{J}(d_2(b_2))=b_1.$ For general $\psi,$ the argument in Proposition \ref{prop:case2} applies.
\end{proof}

\begin{rem}
One can give an alternative definition of $\mathcal{S}(X_2(F))$ as in \cite[\S 5.2]{Getz:Hsu:Leslie} and mimic the argument in \cite[\S 5]{Hsu:asymptotics} to show that the map in Proposition \ref{prop:case2:arch}(i) can be upgraded to the following exact sequence of smooth $\mathrm{O}_{V_2}(F)$-modules
        \begin{align*}
        0\longrightarrow\mathcal{S}_{\mathrm{ES}}(X_{2}(F))\longrightarrow\mathcal{S}(X_{2}(F))\longrightarrow \pi \CC[[\mathrm{Res}_{F/\RR}X_{2}]]\longrightarrow 0.
    \end{align*}
     % Thus, we only give a sketch for the stronger statement. For a unitary character $\chi:F^\times\to \CC^\times,$ let $L(s,\chi)$ be the Tate's $L$-function. Using the well-known result on the principal series of $\SL_2(F)$ and the explicit description of the basic function $b_2$,  Explicitly, one can show $f\in \mathcal{S}(X_2(F))$ if and only if both
    % \begin{align*}
    %     \frac{f_{\chi_s}}{L(s+1,\chi)^2} \quad \textrm{ and }\quad \frac{M_{w_0}(\chi_s)\otimes M_{w_0}(\chi_s)(f_{\chi_s})}{L(s,\chi)^2}
    % \end{align*}
    % are holomorphic sections. Then the exact sequence in the statement can be obtained (c.f. 
\end{rem}

\begin{cor}\label{cor:2eq1:arch}
For $f\in \mathcal{S}(X_{2}(F))$, $c_2(f)=2c_1(d_2(f))$.
\end{cor}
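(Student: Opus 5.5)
We would mirror the proof of Corollary \ref{cor:2eq1}. First we would show that $c_2$ and $c_1\circ d_2$ are proportional on $\mathcal{S}(X_2(F))$, and then we would fix the constant by evaluating both sides on the basic function $b_2$.

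For proportionality, we use Proposition \ref{prop:case2:arch}(i). There $c_2$ is the composite $\mathcal{S}(X_2(F))\xrightarrow{d_2}\pi\to\CC$, where $\pi\to\CC$ takes the leading (logarithmic) coefficient. By Proposition \ref{prop:case2:arch}(ii), $d_2$ followed by $J$ is identified with $\mathcal{S}(X_1(F))$. The functional $c_1\circ J$ on $\pi$ kills $\sigma$: through the diagram $I\circ\tilde{d}_2=\varepsilon(0,\psi)d_2\circ I$, and since $c_1(I(f))=\zeta(1)^{-1}I(f)(0)$, this amounts to showing that elements of $\sigma$ (germs without log term, modulo constants) give functions on $X_1$ vanishing at the origin. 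Alternatively, one can argue that $c_1\circ J$ is a $P_2(F)$-invariant functional on $\pi$. Since $\sigma'\subset\sigma$ is the unique irreducible $P_2(F)$-subrepresentation and $N_2(F)$ acts nontrivially on it, any invariant functional kills $\sigma'$. Continuity, together with the fact that $\sigma/\sigma'$ consists of the finitely many "boundary points" with nontrivial $\mathrm{GL}_1$ characters, then forces it to kill $\sigma$. Hence $c_1\circ d_2$ factors through $\pi/\sigma\cong\CC$, and therefore equals $\lambda c_2$ for a constant $\lambda$.

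To compute $\lambda$, we take $\psi(t)=e^{2\pi i\mathrm{tr}_{F/\RR}(t)}$, which is allowed because neither $c_2$ nor $c_1\circ d_2$ depends on $\psi$ after normalization by $b$'s. Then $d_2(b_2)=b_1$, so $c_1(d_2(b_2))=c_1(b_1)=1$ by Proposition \ref{prop:case1:arch}. On the other side, $c_2(b_2)=\tilde{c}_2(\tilde{b}_2)$ by Proposition \ref{prop:case2:arch}(i), and Lemma \ref{lem:cequal} with $\ell=2$ computes this as a residue of $\zeta(s)$ at $s=0$. For $F=\RR$, $\mathrm{Res}_{s=0}\pi^{-s/2}\Gamma(s/2)=2$. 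For $F=\CC$, $\mathrm{Res}_{s=0}2(2\pi)^{-s}\Gamma(s)=2$. Either way $c_2(b_2)=2$, which gives $c_2=2c_1\circ d_2$. The discrepancy with the nonarchimedean Corollary \ref{cor:2eq1} comes from $\tilde{c}_2$ being a residue here rather than $\zeta(s)^{-1}Z_2|_{s=0}$. We would also sanity-check this against Lemma \ref{lem:easy:arch}(ii), where $\tilde{c}_2(f)=\mathrm{vol}(K_{\GG_m})f(0)/[F:\RR]$, with $\mathrm{vol}(K_{\GG_m})=2$ for the measures fixed above.

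The main obstacle is the proportionality step in the archimedean setting. There $\pi$ is only described up to formal power series $\CC[[\mathrm{Res}_{F/\RR}X_2]]$, so one must check that $c_1\circ d_2$ is continuous and really annihilates $\sigma$. As a fallback, we would verify the vanishing directly from \eqref{eq:compute}. If $I(f)$ has no $\log$ term at the origin, the limit defining $I(\tilde{d}_2(f))$ near $x\to0$ should produce a function vanishing at $0$. To show this, we would compare with the explicit formula \eqref{eq:a2:arch} for $\tilde{a}_2$ and use Tate's functional equation as in Lemma \ref{lem:der}.
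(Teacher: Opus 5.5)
Your proposal is correct. Your normalization step is the paper's: for $\psi(t)=e^{2\pi i\mathrm{tr}_{F/\RR}(t)}$, Lemma \ref{lem:cequal} gives $c_2(b_2)=\mathrm{Res}_{s=0}\zeta(s)=2$, and $c_1(d_2(b_2))=c_1(b_1)=1$. Where you differ is the proportionality step.

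\textbf{The paper's route.} It asserts that $c_2$ and $c_1\circ d_2$ are both continuous $\mathrm{O}_{V_2}(F)$-invariant functionals on $\mathcal{S}(X_2(F))/\mathcal{S}(X_2^\circ(F))$. It then invokes the one-dimensionality of that space of functionals, stated in the proof of Proposition \ref{prop:case2:arch}(i).

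\textbf{Your route.} You use only $P_2(F)$-invariance, which is what is visibly available: $J$ is only a $P_2(F)$-isomorphism, and $c_1$ is evaluation at $0$. You show that $c_1\circ J$ kills $\sigma$, so it factors through the trivial quotient $\pi/\sigma$. This bypasses the multiplicity-one input. It also proves the $\mathrm{O}_{V_2}(F)$-invariance of $c_1\circ d_2$, which the paper takes for granted.

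\textbf{The cost.} The price is your claim about $\sigma/\sigma'$. In the archimedean case it is not finitely many boundary points but an infinite germ space. The absence of the trivial torus character there has to come from the structure of $\mathrm{St}$ itself, e.g.\ its Kirillov-model asymptotics. It cannot be read off through $J$, because the trivial character is exactly the constant term $f(0)$ you are trying to exclude.

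\textbf{Your fallback is the shortest complete proof.} It makes both steps unnecessary. Take $f$ to be $r_2(K)$-invariant. Tate's functional equation, as in the proof of Lemma \ref{lem:der:arch}, gives
\[
\frac{Z_1(\tilde{d}_2(f),s+1)}{\varepsilon(-s,\psi)\zeta(1+s)}=\frac{1}{\zeta(-s)}\int_{F^\times}f(0_{\underline{2}},0,t,0,0)|t|^{-s}\,d^\times t .
\]
At $s=0$ the zero $\zeta(-s)^{-1}=-\tfrac{s}{2}+O(s^2)$ meets the pole $-\tilde{c}_2(f)/s$ of the integral. Hence
\[
\tilde{c}_1(\tilde{d}_2(f))=\tfrac12\varepsilon(0,\psi)\tilde{c}_2(f).
\]
Combine this with $c_1\circ I=\tilde{c}_1$, $I\circ\tilde{d}_2=\varepsilon(0,\psi)d_2\circ I$ and $c_2\circ I=\tilde{c}_2$. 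This gives the corollary for every $f$ and every $\psi$ at once. It is the same local identity the paper later uses in the proof of Theorem \ref{thm:main}.

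\textbf{A slip.} In your sanity check, $\mathrm{vol}(K_{\GG_m})/[F:\RR]=2$ in both cases, so $\mathrm{vol}(K_{\GG_m})=2$ holds only for $F=\RR$.
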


\begin{proof}
    We may assume $\psi(t)=e^{2\pi i\mathrm{tr}_{F/\RR}(t)}.$ By the computation in Lemma \ref{lem:cequal},
    \begin{align*}
        c_2(b_2)= 2=2c_{1}(b_1)=2c_1(d_2(b_2)).
    \end{align*}
    Since both $c_2$ and $c_1\circ d_2$ are continuous $\mathrm{O}_{V_2}(F)$-equivariant linear functionals on $\mathcal{S}(X_2(F))/\mathcal{S}(X_2^\circ(F))$, we have $c_2=2c_1\circ d_2.$
\end{proof}

 For $f\in \mathcal{S}(X_2(F)),$ choose $\tilde{f}\in \widetilde{\mathcal{S}}(X_2(F))$ such that $I(\tilde{f})=f$. As in the nonarchimedean case, we have a $P_2(F)$-equivariant morphism
\begin{align*} 
  \mathcal{S}(X_{2}(F))\xrightarrow{(a_2, d_2)} (\CC\otimes\CC)\oplus \pi,
\end{align*}
where
\begin{align*}
    a_2(f):=\tilde{a}_2(\tilde{f})(0:0:0:1).
\end{align*}
In particular,  $a_2(f)=f(0)$ for $f\in \mathcal{S}_{\mathrm{ES}}(X_{2}(F)).$

\begin{lem}\label{lem:der:arch}
Let $f\in \mathcal{S}(V_2(F)\oplus F^2)$. We have 
\begin{align*}
    &\frac{d}{ds} \left(-\frac{Z_{2}(f,s)}{\zeta(s)}+\frac{Z_{1}(\tilde{d}_2(f), s+1)}{\varepsilon(-s,\psi)\zeta(s+1)}\right)\Bigg|_{s=0}\\
    &=-\frac{d^2}{d^2s}\zeta(s)^{-1}\bigg|_{s=0}c_2(I(f))-\frac{1}{2}a_2(I(f)).
\end{align*}
\end{lem}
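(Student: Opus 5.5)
We mimic the proof of Lemma \ref{lem:der}, replacing the geometric series by explicit archimedean integrals. Both sides are continuous in $f$ and invariant under $r_2(K)$-averaging, since $Z_2$, $Z_1\circ\tilde{d}_2$, $I$ and hence $c_2\circ I$, $a_2\circ I$ all are. So we may assume $f$ is $r_2(K)$-invariant and, by density, of product form.

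\textbf{First term.} By \eqref{eq:Z2compute},
\begin{align*}
Z_2(f,s)=\frac{\mathrm{vol}(K_{\GG_m})}{[F:\RR]s}f(0)+A(f)+O(s),
\end{align*}
where
\begin{align*}
A(f):=\int_{|t|\ge 1} f(0_{\underline{4}},0,t)\,d^\times t+\int_{|t|<1}\big(f(0_{\underline{4}},0,t)-f(0)\big)\,d^\times t
\end{align*}
is exactly the first line of \eqref{eq:a2:arch}. Write $\zeta(s)^{-1}=\alpha_1 s+\alpha_2 s^2+O(s^3)$, which is possible since $\zeta$ has a simple pole at $0$. Then
\begin{align*}
\frac{d}{ds}\frac{Z_2(f,s)}{\zeta(s)}\Big|_{s=0}=\alpha_2\,\frac{\mathrm{vol}(K_{\GG_m})}{[F:\RR]}f(0)+\alpha_1 A(f).
\end{align*}
By Lemma \ref{lem:easy:arch}(ii) and Proposition \ref{prop:case2:arch}(i), $\frac{\mathrm{vol}(K_{\GG_m})}{[F:\RR]}f(0)=\tilde{c}_2(f)=c_2(I(f))$. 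Also $\alpha_2=\frac12\frac{d^2}{ds^2}\zeta(s)^{-1}|_{s=0}$ and $\alpha_1=(\mathrm{Res}_{s=0}\zeta)^{-1}$.

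\textbf{Second term.} As in Lemma \ref{lem:der}, apply \eqref{eq:Tate} in the $y$-variable:
\begin{align*}
\frac{Z_1(\tilde{d}_2(f),s+1)}{\varepsilon(-s,\psi)\zeta(s+1)}=\frac{1}{\zeta(-s)}\int_{F^\times} f(0_{\underline{2}},0,t,0,0)|t|^{-s}\,d^\times t.
\end{align*}
Split the integral at $|t|=1$ and subtract $f(0)$ on $|t|<1$. The main term becomes $f(0)\,\mathrm{vol}(K_{\GG_m})/([F:\RR](-s))$, plus a remainder $B(f)+O(s)$, where
\begin{align*}
B(f):=\int_{|t|>1} f(tx_0,0,0)\,d^\times t+\int_{|t|\le 1}\big(f(tx_0,0,0)-f(0)\big)\,d^\times t,\qquad x_0=(0:0:0:1),
\end{align*}
is the second line of \eqref{eq:a2:arch}. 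This identification uses $r_2(K)$-invariance of $f$ to match $f(0_{\underline 2},0,t,0,0)$ with $f(tx_0,0,0)$ up to a $K_{\GG_m}$-average, after inverting $t\mapsto t^{-1}$. Expanding in $s$ with $\zeta(-s)^{-1}=-\alpha_1 s+\alpha_2 s^2+\cdots$, the derivative at $0$ is
\begin{align*}
\alpha_2\,\tilde{c}_2(f)\cdot(\text{sign})+(-\alpha_1)B(f).
\end{align*}

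\textbf{Combining.} Subtract the first derivative from the second. The $A$ and $B$ terms combine to $-\alpha_1(A+B)=-\alpha_1\,\tilde{a}_2(f)(x_0)=-\alpha_1\,a_2(I(f))$. The $\alpha_2$-terms combine to a multiple of $\frac{d^2}{ds^2}\zeta^{-1}(0)\,c_2(I(f))$.

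\textbf{Main obstacle.} The delicate step is the normalization bookkeeping: the signs of the two $\alpha_2$ contributions (so that they add to the full second derivative rather than cancel), and checking that $\alpha_1$ together with the measure normalization $d^\times t=\zeta(1)dt/|t|$ turns the $a_2$ coefficient into exactly $\tfrac12$. I would verify this against the constants by evaluating everything on the test function $\tilde{b}_2$ with $\psi(t)=e^{2\pi i\,\mathrm{tr}(t)}$, where $Z_2(\tilde{b}_2,s)=\zeta(s)$ and $\tilde{a}_2(\tilde{b}_2)$ can be computed from the Bessel expansion of $b_2$ as in Lemma \ref{lem:cequal}. Any discrepancy then forces the correct grouping of terms.
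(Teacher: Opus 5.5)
Your route is the paper's: expand $Z_2(f,s)$ via \eqref{eq:Z2compute}, use Tate's functional equation to write the $Z_1$ term as $\zeta(-s)^{-1}\int_{F^\times} f(0_{\underline{2}},0,t,0,0)|t|^{-s}\,d^\times t$, split at $|t|=1$, and recognize the two constant terms as the two lines of \eqref{eq:a2:arch}.

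As written, however, the proposal does not yet prove the stated identity. The two constants that make up the lemma are left open. You write ``(sign)'' for the $\alpha_2$-contribution of the second term, and you never evaluate $\alpha_1$. Your conclusion is therefore still compatible with the $c_2$-term cancelling entirely. Both points are immediate, and no test function is needed. Since $\zeta(-s)^{-1}=-\alpha_1 s+\alpha_2 s^2+O(s^3)$, setting $C:=\tfrac{\mathrm{vol}(K_{\GG_m})}{[F:\RR]}f(0)=\tilde{c}_2(f)$ gives
\begin{align*}
\frac{1}{\zeta(-s)}\Big(\frac{C}{-s}+B(f)+O(s)\Big)=\alpha_1 C+\big(-\alpha_2 C-\alpha_1 B(f)\big)s+O(s^2).
\end{align*}
So the derivative of the second term at $s=0$ is $-\alpha_2 C-\alpha_1 B(f)$. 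Subtracting your first-term result $\alpha_2 C+\alpha_1 A(f)$ gives $-2\alpha_2 C-\alpha_1\big(A(f)+B(f)\big)$. Thus the $\alpha_2$-terms add rather than cancel, and $2\alpha_2=\frac{d^2}{ds^2}\zeta(s)^{-1}|_{s=0}$. Finally, $\alpha_1=1/\mathrm{Res}_{s=0}\zeta(s)=\tfrac12$ for both $F=\RR$ and $F=\CC$, because $\pi^{-s/2}\Gamma(s/2)$ and $2(2\pi)^{-s}\Gamma(s)$ are both $2/s+O(1)$. The measure normalization plays no role in the coefficient of $a_2$; it is already absorbed into $C=\tilde{c}_2(f)$.

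One smaller remark: the match between the $Z_1$ integrand and the second line of \eqref{eq:a2:arch} is literal. Indeed $f(0_{\underline{2}},0,t,0,0)=f(tx_0,0,0)$ for $x_0=(0,0,0,1)$, so no $K_{\GG_m}$-averaging and no inversion $t\mapsto t^{-1}$ is needed.
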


\begin{proof}
Note that for $F$ real or complex,
\begin{align*}
    \frac{d}{ds}\zeta(s)^{-1}\bigg|_{s=0}=\lim_{s\to 0} \frac{1}{s\zeta(s)}=\frac{1}{2}.
\end{align*}
We can assume $f$ is $r_2(K)$-invariant. By \eqref{eq:Z2compute}  we have
    \begin{align*}
        \frac{d}{ds}\frac{Z_2(f,s)}{\zeta(s)}\bigg|_{s=0}&=\frac{1}{2}\frac{d^2}{d^2s}\zeta(s)^{-1}\bigg|_{s=0}f(0)\frac{\mathrm{vol}(K_{\GG_m})}{[F:\RR]}\\
        &+\frac{1}{2}\left(\int_{|t|\ge1} f(0_{\underline{4}},0,t) d^\times t+\int_{|t|<1} (f(0_{\underline{4}},0,t)-f(0)) d^\times t\right).
    \end{align*}
    On the other hand, by \eqref{eq:Tate}
    \begin{align*}
 Z_1(\tilde{d}_2(f),s+1)&=\varepsilon(-s,\psi)\frac{\zeta(1+s)}{\zeta(-s)}\int_{F^\times} f(0_{\underline{2}},0,t,0,0)|t|^{-s}d^\times t.
    \end{align*}
    Thus similarly
    \begin{align*}
        &\frac{d}{ds} \frac{Z_1(\tilde{d}_2(f),s+1)}{\varepsilon(-s,\psi)\zeta(s+1)}\Bigg|_{s=0}\\
        &=-\frac{1}{2}\frac{d^2}{ds^2}{\zeta(s)}^{-1}\bigg|_{s=0}f(0)\frac{\mathrm{vol}(K_{\GG_m})}{[F:\RR]}\\
        &-\frac{1}{2}\left(\int_{|t|>1} f(0_{\underline{2}},0,t,0,0)d^\times t+\int_{|t|\le 1} (f(0_{\underline{2}},0,t,0,0)-f(0))d^\times t\right).
    \end{align*}
By the proof of Lemma \ref{lem:easy:arch} and \eqref{eq:a2:arch}, $\tilde{c}_2(f)=f(0)\frac{\mathrm{vol}(K_{\GG_m})}{[F:\RR]}$ and
\begin{align*}
    \tilde{a}_2(f)(0:0:0:1)=&\int_{|t|>1} f(0_{\underline{4}},0,t)d^\times t+ \int_{|t|\le 1} f(0_{\underline{4}},0,t)-f(0)d^\times t\\
    &+  \int_{|t|< 1} f(0_{\underline{2}},0,t^{-1},0,0)d^\times t+\int_{1\ge |t|} (f(0_{\underline{2}},0,t,0,0)-f(0))d^\times t.
\end{align*}
Therefore, by the definition of $c_2$ and $a_2$ we have
\begin{align*}
    &\frac{d}{ds} \left(-\frac{Z_{2}(f,s)}{\zeta(s)}+\frac{Z_{1}(\tilde{d}_2(f), s+1)}{\varepsilon(-s,\psi)\zeta(s+1)}\right)\Bigg|_{s=0}=-\frac{d^2}{d^2s}\zeta(s)^{-1}\bigg|_{s=0}c_2(I(f))-\frac{1}{2}a_2(I(f)).
\end{align*}
\end{proof}

\section{Proof of Theorem \ref{thm:main}}\label{sec:mainproof}

Let $E$ be a number field. For $\ell\ge 1,$ define
\begin{align*}
    d_\ell&:=\otimes_{v} d_{\ell,v}:\mathcal{S}(X_\ell(\A_E))\longrightarrow \mathcal{S}(X_{\ell-1}(\A_E)),\\
    d_1'&:=\otimes_{v} d_{1,v}':\mathcal{S}(X_1(\A_E))\longrightarrow \mathcal{S}(X_{0}(\A_E)).
\end{align*}
Here $\mathcal{S}(X_0(\A_E)):=\CC_{1}$ is the representation $|\cdot|$ of $\A_E^\times.$ Note that $d_1'=d_1\circ w$
where $w:\mathcal{S}(X_1(\A_E))\longrightarrow\mathcal{S}(X_1(\A_E))$ is the automorphism induced by the action of $\begin{psmatrix}
    0 & 1\\
     1 & 0
\end{psmatrix}$ that permutes the two entries of $X_1$.

For $\ell\ge 3,$ when $v$ is finite, we have $c_{\ell,v}(b_{\ell,v})=\zeta_v(2-\ell).$ Thus we normalize $c_{\ell,v}$ by
\begin{align*}
    c_{\ell,v}^\circ:=\begin{cases}
        \zeta_v(2-\ell)^{-1}c_{\ell,v} & \textrm{if } v\nmid\infty \textrm{ or } v \textrm{ is real and } 2\nmid\ell,  \\
        \frac{\zeta_v(s)^{-1}}{s+\ell-2}\bigg|_{s=2-\ell} c_{\ell,v} & \textrm{otherwise,}
    \end{cases}
\end{align*}
and we define
\begin{align*}
     c_\ell&:=\otimes_{v} c_{\ell,v}^\circ:\mathcal{S}(X_\ell(\A_E))\longrightarrow \CC.
\end{align*}
For $\ell=1,2,$ $c_\ell(b_{\ell,v})=1$ for finite $v$, so we let
\begin{align*}
     c_\ell&:=\otimes_{v} c_{\ell,v}:\mathcal{S}(X_\ell(\A_E))\longrightarrow \CC.
\end{align*}

Recall that $\epsilon(s)=|D|^{\frac{1}{2}-s},$ where $D\in \ZZ$ is the absolute discriminant of $E$. We summarize the results in \S \ref{sec:agree} and \S \ref{sec:agree:arch} in the adelic setting.
\begin{thm}\label{thm:summary}
For $\ell\ge 2,$ $I\circ \widetilde{\mathcal{F}}_{X_\ell}=\mathcal{F}_{X_\ell}\circ I$ and
\begin{align*}
    I\circ \tilde{d}_{\ell}=|D|^{\frac{5}{2}-\ell}d_{\ell}\circ I.
\end{align*}
For $\ell\ge 3$, $$\tilde{c}_{\ell}=\zeta(2-\ell)c_\ell\circ I=|D|^{\ell-\frac{3}{2}}\zeta(\ell-1)c_\ell\circ I.$$ Moreover,
\begin{align*}
  I\circ \tilde{d}_{1}=\zeta(2)|D|^\frac{3}{2}d_{1}\circ I, \quad\quad \mathrm{ev}\circ \tilde{d}_1= |D|d_1'\circ I, \quad\quad  c_2=2^{|\infty|}c_1\circ d_2.
\end{align*}
Here $\mathrm{ev}$ is the evaluation map of functions on $\A_E^2$ at $(0,0).$
\end{thm}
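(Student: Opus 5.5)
The plan is to assemble Theorem \ref{thm:summary} place by place from the local comparison results, then take products using the normalizations of the global maps and the product formula $\prod_v \varepsilon(s,\psi_v)=\varepsilon(s)=|D|^{\frac12-s}$.

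\emph{Step 1: Fourier transforms and $\tilde d_\ell$.} For $\ell\ge 2$, the identity $I\circ\widetilde{\mathcal F}_{X_\ell}=\mathcal F_{X_\ell}\circ I$ holds on pure tensors. It follows factorwise from Proposition \ref{prop:min:rep} (nonarchimedean places) and Proposition \ref{prop:min:rep:arch} (archimedean places), because both sides are restricted tensor products: basic functions go to basic functions, since $I(\one_{V_{\ell+1}(\OO_v)})=b_{\ell,v}$.

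For $\ell\ge3$, the same propositions give $I\circ\tilde d_{\ell,v}=\varepsilon(\ell-2,\psi_v)\,d_{\ell,v}\circ I$. For $\ell=2$, Propositions \ref{prop:case2} and \ref{prop:case2:arch} give the same identity with $\varepsilon(0,\psi_v)=\varepsilon(\ell-2,\psi_v)$. Taking the product over $v$ yields the factor $\varepsilon(\ell-2)=|D|^{\frac12-(\ell-2)}=|D|^{\frac52-\ell}$. Almost all factors are $1$ because $\psi_v$ is unramified there.

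\emph{Step 2: $\ell=1$.} Propositions \ref{prop:case1} and \ref{prop:case1:arch} give the local identities $I\circ\tilde d_{1,v}=\varepsilon(-1,\psi_v)\zeta_v(2)\,d_{1,v}\circ I$ and $\mathrm{ev}\circ\tilde d_{1,v}=\varepsilon(-1/2,\psi_v)\,d'_{1,v}\circ I$. The products over $v$ are $\zeta(2)|D|^{3/2}$ and $|D|^{1}$ respectively.

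\emph{Step 3: $c_\ell$ for $\ell\ge3$.} Locally, $\tilde c_{\ell,v}=c_{\ell,v}\circ I$ by the second diagrams of Propositions \ref{prop:min:rep} and \ref{prop:min:rep:arch}. The global $\tilde c_\ell$ is $Z_\ell(f,2-\ell)$, a product of local Tate integrals. One subtlety is that at archimedean places where the local $\zeta_v$ has a pole at $2-\ell$, the local $\tilde c_{\ell,v}$ is a residue while the global $\zeta(s)$ is holomorphic there. I would therefore write
\begin{align*}
Z_\ell(f,s)=\zeta(s)\prod_v \frac{Z_{\ell,v}(f_v,s)}{\zeta_v(s)}
\end{align*}
and evaluate at $s=2-\ell$.

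At nonpolar places, $Z_{\ell,v}/\zeta_v$ at $2-\ell$ equals $\zeta_v(2-\ell)^{-1}c_{\ell,v}\circ I=c^\circ_{\ell,v}\circ I$. At polar places it equals $\big(\frac{\zeta_v(s)^{-1}}{s+\ell-2}\big)\big|_{s=2-\ell}\cdot\mathrm{Res}_{s=2-\ell}Z_{\ell,v}$, which is again $c^\circ_{\ell,v}\circ I$.

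This gives $\tilde c_\ell=\zeta(2-\ell)\,c_\ell\circ I$. Applying the functional equation $\zeta(2-\ell)=\varepsilon(2-\ell)\zeta(\ell-1)=|D|^{\ell-\frac32}\zeta(\ell-1)$ gives the second form.

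\emph{Step 4: $c_2=2^{|\infty|}c_1\circ d_2$.} This follows from Corollary \ref{cor:2eq1} at finite places (factor $1$) and Corollary \ref{cor:2eq1:arch} at archimedean places (factor $2$), multiplied over $v$.

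The main obstacle I expect is Step 3. It requires checking carefully that the normalization $c^\circ_{\ell,v}$ exactly absorbs the local poles, so that the order of vanishing of $\prod_v Z_{\ell,v}/\zeta_v$ matches and no extra factor appears. It also requires checking that the local ratios equal $1$ on basic functions at almost all places, so the infinite product is really a finite one. Everything else is bookkeeping of $\varepsilon$-factors.
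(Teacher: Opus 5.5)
Your proposal is correct and follows essentially the same route as the paper. The paper treats every identity except $\tilde c_\ell=\zeta(2-\ell)c_\ell\circ I$ as an immediate product of the local results (your Steps 1, 2 and 4), and proves that remaining identity exactly as in your Step 3: it factors $Z_\ell(f,s)/\zeta(s)$ over places, lets $c^\circ_{\ell,v}$ absorb the archimedean poles at $s=2-\ell$, and then passes from pure tensors to general $f$ by continuity and linearity (a step you should state explicitly, since at the archimedean places the Schwartz spaces are completed tensor products).
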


\begin{proof}
    Only the identity $\tilde{c}_{\ell}=\zeta(2-\ell)c_\ell\circ I$ for $\ell\ge 3$ is unclear. Suppose $f=\otimes f_v\in \mathcal{S}(V_\ell(\A_E)\oplus \A_E^2).$ Since
    \begin{align*}
        \frac{Z_\ell(f,s)}{\zeta(s)}=\prod_{v} \frac{Z_\ell(f_v,s)}{\zeta_v(s)}
    \end{align*}
    whose factors are $1$ at almost all $v$, we have
    \begin{align*}
        \frac{\tilde{c}_\ell(f)}{\zeta(2-\ell)}= \left(\prod_{v\in S} \tilde{c}_{\ell,v}(f_v)\frac{\zeta_v(s)^{-1}}{s+\ell-2}\right)\bigg|_{s=2-\ell}\left(\prod_{v\not\in S} \frac{\tilde{c}_{\ell,v}(f_v)}{\zeta_v(2-\ell)}\right)
    \end{align*}
    where $S$ is the set of infinite places such that $\zeta(s)$ has a (simple) pole at $s=2-\ell.$ By the local computations in \S \ref{sec:agree} and \S \ref{sec:agree:arch}, this is
    \begin{align*}
        \left(\prod_{v\in S} c_{\ell,v}(I(f_v))\frac{\zeta_v(s)^{-1}}{s+\ell-2}\right)\bigg|_{s=2-\ell}\left(\prod_{v\not\in S} \frac{c_{\ell,v}(I(f_v))}{\zeta_v(2-\ell)}\right)=c_\ell(f).
    \end{align*}
    The identity for general $f$ follows by continuity of $c_\ell$ and linearity.
\end{proof}

For $\ell>i\ge 0,$ define
\begin{align*}
    d_{\ell, i}:=d_{i+1}\circ\cdots \circ d_\ell:\mathcal{S}(X_\ell(\A_E))\longrightarrow\mathcal{S}(X_i(\A_E)).
\end{align*}
Let $d_{\ell,\ell}$ be the identity map. Then $I\circ \tilde{d}_{\ell,i}=|D|^{\frac{(4-\ell-i)(\ell-i)}{2}}d_{\ell,i}\circ I$ for $i\ge 1,$ and $I\circ \tilde{d}_{\ell,0}=\zeta(2)|D|^{\frac{(4-\ell)\ell}{2}}d_{\ell,0}\circ I$. 

For $f=\otimes f_v\in \mathcal{S}(X_{2}(\A_E)),$ define
\begin{align}\label{eq:adelica2}
\begin{split}
    a_{2}(f):=&\sum_{v|\infty} \left(-\frac{d^2}{d^2s}\zeta_v(s)^{-1}\bigg|_{s=0}c_{2,v}(f_v)-\frac{1}{2}a_{2,v}(f_v)\right)\prod_{v'\neq v} c_{2,v'}(f_{v'})\\
    &+\frac{1}{2}\sum_{v\nmid \infty} (\log q_v)\big(c_{2,v}(f_v)-a_{2,v}(f_v)\big)\prod_{v'\neq v} c_{2,v'}(f_{v'}).
\end{split}
\end{align}
Note that the sum is actually finite since for $v\nmid\infty,$ $c_2(b_{\ell,v})=a_2(b_{\ell,v})=1$. Extend the definition by continuity and linearity to a linear functional
\begin{align*}
    a_2:\mathcal{S}(X_2(\A_E))\longrightarrow \CC.
\end{align*}

\begin{proof}[Proof of Theorem \ref{thm:main}]

By Theorem \ref{thm:Poisson:lift} and Theorem \ref{thm:summary} it remains to show for $f=\otimes f_v\in \mathcal{S}(V_3(\A_E))$
\begin{align*}
    &\tilde{c}_2(f)+\tilde{c}_1(\tilde{d}_{2}(f))=2^{1-|\infty|}\kappa' c_2(I(f))+2^{1-|\infty|}|D|^{\frac{1}{2}}\kappa a_2(I(f))
\end{align*}
where
\begin{align*}
    \kappa'=\frac{d}{ds}s\zeta(s)\bigg|_{s=0}.
\end{align*}
We begin by unwinding the definition of $\tilde{c}_1$ and $\tilde{c}_2$. Recall $\kappa=\mathrm{Res}_{s=1} \zeta(s).$ Using the functional equation $\zeta(s)=\epsilon(s)\zeta(1-s),$ we have by Proposition \ref{prop:case2} and Proposition \ref{prop:case2:arch}
\begin{align*}
    \tilde{c}_2(f)&= \left(\frac{d}{ds} s\zeta(s)\frac{Z_{2}(f,s)}{\zeta(s)}\right)\Bigg|_{s=0}\\
&=\big(\mathrm{Res}_{s=0}\zeta(s)\big)\left(\frac{d}{ds}\frac{Z_2(f,s)}{\zeta(s)}\right)\Bigg|_{s=0}+\kappa'\frac{Z_2(f,s)}{\zeta(s)}\Bigg|_{s=0}\\
    &=-|D|^{1/2}\kappa\left(\frac{d}{ds}\frac{Z_2(f,s)}{\zeta(s)}\right)\Bigg|_{s=0}+2^{-|\infty|}\kappa' c_2(I(f)).
\end{align*}
Similarly, by Proposition \ref{prop:case1}, Proposition \ref{prop:case2}, Proposition \ref{prop:case1:arch} and Proposition \ref{prop:case2:arch}
\begin{align*}
   \tilde{c}_1(\tilde{d}_{2}(f))&=  \left(\frac{d}{ds} s\varepsilon(-s)\zeta(1+s)\frac{Z_1(\tilde{d}_2(f),s+1)}{\varepsilon(-s)\zeta(1+s)}\right)\Bigg|_{s=0}\\
    &=|D|^{1/2}\kappa\left(\frac{d}{ds}\frac{Z_1(\tilde{d}_2(f),s+1)}{\varepsilon(-s)\zeta(1+s)}\right)\Bigg|_{s=0}+\left(\frac{d}{ds}s\zeta(-s)\right)\bigg|_{s=0} \left(\frac{Z_1(\tilde{d}_2(f),s+1)}{\varepsilon(-s)\zeta(1+s)}\right)\Bigg|_{s=0}\\
    &=|D|^{1/2}\kappa\left(\frac{d}{ds}\frac{Z_1(\tilde{d}_2(f),s+1)}{\varepsilon(-s)\zeta(1+s)}\right)\Bigg|_{s=0}+\kappa'  c_1(d_2(I(f))).
\end{align*}
We claim
\begin{align*}
    a_2(I(f))&=\frac{d}{ds}\left(-\frac{Z_2(f,s)}{\zeta(s)}+\frac{Z_1(\tilde{d}_2(f),s+1)}{\varepsilon(-s)\zeta(1+s)}\right)\Bigg|_{s=0}.
\end{align*}
Since $c_2=2^{|\infty|}c_1\circ d_2$, the assertion follows from the claim.

Let $S$ be a finite set of places including $\infty$ such that for $v\not\in S,$ $f_v=\one_{V_3(\mathcal{O}_v)}$ and $\psi_v$ is unramified. Then
\begin{align*}
    -\frac{Z_2(f,s)}{\zeta(s)}+\frac{Z_1(\tilde{d}_2(f),s+1)}{\varepsilon(-s)\zeta(1+s)}=-\prod_{v\in S}\frac{Z_2(f_v,s)}{\zeta_v(s)}+\prod_{v\in S}\frac{Z_1(\tilde{d}_2(f_v),s+1)}{\varepsilon(-s,\psi_v)\zeta_v(1+s)}.
\end{align*}
By the product rule and the identity
\begin{align*}
   \frac{Z_2(f_v,s)}{\zeta_v(s)}\bigg|_{s=0}= 2^{-\delta_{v|\infty}}c_{2,v}(I(f_v))= c_{1,v}\circ d_{2,v}(I(f_v))=\frac{Z_1(\tilde{d}_2(f_v),s+1)}{\varepsilon(-s,\psi_v)\zeta_v(1+s)}\bigg|_{s=0}
\end{align*}
used above, the claim follows from Lemma \ref{lem:der} and Lemma \ref{lem:der:arch}. This completes the proof.
\end{proof}

% ----------------------------------------------------------------

\bibliography{refs}{}

@article {AG:Nash,
    AUTHOR = {Aizenbud, Avraham and Gourevitch, Dmitry},
     TITLE = {Schwartz functions on {N}ash manifolds},
   JOURNAL = {Int. Math. Res. Not. IMRN},
  FJOURNAL = {International Mathematics Research Notices. IMRN},
      YEAR = {2008},
    NUMBER = {5},
     PAGES = {Art. ID rnm 155, 37},
      ISSN = {1073-7928},
   MRCLASS = {46T30 (14P20 46F05)},
  MRNUMBER = {2418286},
MRREVIEWER = {Michael Kunzinger},
       DOI = {10.1093/imrn/rnm155},
       URL = {https://doi.org/10.1093/imrn/rnm155},
}

@article{Getz:Hsu,
     author = {Getz, Jayce R. and Hsu, Chun-Hsien},
     title = {The {Fourier} transform for triples of quadratic spaces},
     journal = {Annales de l'Institut Fourier},
     year = {2026},
     publisher = {Association des Annales de l{\textquoteright}institut Fourier},
     doi = {10.5802/aif.3765},
     language = {en},
     note = {Online first},
}

@article {BKnormalized,
    AUTHOR = {Braverman, Alexander and Kazhdan, David},
     TITLE = {Normalized intertwining operators and nilpotent elements in
              the {L}anglands dual group},
      NOTE = {Dedicated to Yuri I. Manin on the occasion of his 65th
              birthday},
   JOURNAL = {Mosc. Math. J.},
  FJOURNAL = {Moscow Mathematical Journal},
    VOLUME = {2},
      YEAR = {2002},
    NUMBER = {3},
     PAGES = {533--553},
      ISSN = {1609-3321},
   MRCLASS = {22E50 (11F70 22E55)},
  MRNUMBER = {1988971},
MRREVIEWER = {Goran Mui\"A},
}

@article {Getz:Liu:BK,
    AUTHOR = {Getz, Jayce Robert and Liu, Baiying},
     TITLE = {A refined {P}oisson summation formula for certain
              {B}raverman-{K}azhdan spaces},
   JOURNAL = {Sci. China Math.},
  FJOURNAL = {Science China. Mathematics},
    VOLUME = {64},
      YEAR = {2021},
    NUMBER = {6},
     PAGES = {1127--1156},
      ISSN = {1674-7283},
   MRCLASS = {11F70 (11F66)},
  MRNUMBER = {4268887},
       DOI = {10.1007/s11425-018-1616-0},
       URL = {https://doi.org/10.1007/s11425-018-1616-0},
}

@article {Kobayashi:Mano,
    AUTHOR = {Kobayashi, Toshiyuki and Mano, Gen},
     TITLE = {The {S}chr\"{o}dinger model for the minimal representation of the
              indefinite orthogonal group {${\rm O}(p,q)$}},
   JOURNAL = {Mem. Amer. Math. Soc.},
  FJOURNAL = {Memoirs of the American Mathematical Society},
    VOLUME = {213},
      YEAR = {2011},
    NUMBER = {1000},
     PAGES = {vi+132},
      ISSN = {0065-9266},
      ISBN = {978-0-8218-4757-2},
   MRCLASS = {22E30 (22E46 43A80)},
  MRNUMBER = {2858535},
MRREVIEWER = {Takeshi Kawazoe},
       DOI = {10.1090/S0065-9266-2011-00592-7},
       URL = {https://doi.org/10.1090/S0065-9266-2011-00592-7},
}

@article {GK:cone,
    AUTHOR = {Gurevich, Nadya and Kazhdan, David},
     TITLE = {Fourier transform on a cone and the minimal representation of
              even orthogonal group},
   JOURNAL = {Israel J. Math.},
  FJOURNAL = {Israel Journal of Mathematics},
    VOLUME = {266},
      YEAR = {2025},
    NUMBER = {1},
     PAGES = {99--130},
      ISSN = {0021-2172},
   MRCLASS = {22E50 (20G05 20G25 43A65)},
  MRNUMBER = {4905579},
       DOI = {10.1007/s11856-025-2753-y},
       URL = {https://doi.org/10.1007/s11856-025-2753-y},
}

@article{Tr,
  title={Smith theory and geometric {H}ecke algebras},
  author={Treumann, David},
  journal={arXiv preprint arXiv:1107.3798, to appear in Math. Ann.},
  year={2011}
}

@article {B,
    AUTHOR = {Bushnell, Colin J.},
     TITLE = {Representations of reductive {$p$}-adic groups: localization
              of {H}ecke algebras and applications},
   JOURNAL = {J. London Math. Soc. (2)},
  FJOURNAL = {Journal of the London Mathematical Society. Second Series},
    VOLUME = {63},
      YEAR = {2001},
    NUMBER = {2},
     PAGES = {364--386},
      ISSN = {0024-6107},
   MRCLASS = {22E50},
  MRNUMBER = {1810135},
MRREVIEWER = {Bertrand Lemaire},
       URL = {https://doi.org/10.1017/S0024610700001885},
}

@article {F,
    AUTHOR = {Flicker, Yuval Z.},
     TITLE = {Automorphic forms on covering groups of {${\rm GL}(2)$}},
   JOURNAL = {Invent. Math.},
  FJOURNAL = {Inventiones Mathematicae},
    VOLUME = {57},
      YEAR = {1980},
    NUMBER = {2},
     PAGES = {119--182},
      ISSN = {0020-9910},
   MRCLASS = {10D40 (22E55)},
  MRNUMBER = {567194},
MRREVIEWER = {Wen Ch'ing Winnie Li},
       URL = {https://doi.org/10.1007/BF01390092},
}

@article {I,
    AUTHOR = {Ikeda, Tamotsu},
     TITLE = {On the theory of {J}acobi forms and {F}ourier-{J}acobi
              coefficients of {E}isenstein series},
   JOURNAL = {J. Math. Kyoto Univ.},
  FJOURNAL = {Journal of Mathematics of Kyoto University},
    VOLUME = {34},
      YEAR = {1994},
    NUMBER = {3},
     PAGES = {615--636},
      ISSN = {0023-608X},
   MRCLASS = {11F30 (11F27 11F55)},
  MRNUMBER = {1295945},
MRREVIEWER = {Rolf Berndt},
       URL = {https://doi.org/10.1215/kjm/1250518935},
}

@article {K,
    AUTHOR = {Kudla, Stephen S.},
     TITLE = {On the local theta-correspondence},
   JOURNAL = {Invent. Math.},
  FJOURNAL = {Inventiones Mathematicae},
    VOLUME = {83},
      YEAR = {1986},
    NUMBER = {2},
     PAGES = {229--255},
      ISSN = {0020-9910},
   MRCLASS = {22E50 (11F27 11F70)},
  MRNUMBER = {818351},
MRREVIEWER = {Marie-France Vign\'eras},
       URL = {https://doi.org/10.1007/BF01388961},
}

@article {R,
    AUTHOR = {Rallis, S.},
     TITLE = {On the {H}owe duality conjecture},
   JOURNAL = {Compositio Math.},
  FJOURNAL = {Compositio Mathematica},
    VOLUME = {51},
      YEAR = {1984},
    NUMBER = {3},
     PAGES = {333--399},
      ISSN = {0010-437X},
   MRCLASS = {22E55 (22E50)},
  MRNUMBER = {743016},
MRREVIEWER = {Martin L. Karel},
       URL = {http://www.numdam.org/item?id=CM_1984__51_3_333_0},
}

@article {Re,
    AUTHOR = {Reich, Ryan Cohen},
     TITLE = {Twisted geometric {S}atake equivalence via gerbes on the
              factorizable {G}rassmannian},
   JOURNAL = {Represent. Theory},
  FJOURNAL = {Representation Theory. An Electronic Journal of the American
              Mathematical Society},
    VOLUME = {16},
      YEAR = {2012},
     PAGES = {345--449},
      ISSN = {1088-4165},
   MRCLASS = {22E57 (11R39 14M15 20G05)},
  MRNUMBER = {2956088},
MRREVIEWER = {B. Sury},
       URL = {https://doi.org/10.1090/S1088-4165-2012-00420-4},
}

@article{Y,
  title={The {C}{A}{P} representations indexed by {H}ilbert cusp forms},
  author={Yamana, Shunsuke},
  journal={arXiv preprint arXiv:1609.07879},
  year={2016}
}

@article{C,
	title={Fourier Coefficients for {Theta} Representations on Covers of General Linear Groups},
	author={Cai, Yuanqing},
	journal={arXiv:1602.06614},
	year={2016}
}

@article{G,
  title={On Certain Global Constructions of Automorphic Forms Related to Small Representations of {$\mathrm{F}_4$}},
  author={Ginzburg, David},
  journal={arXiv:1503.06409},
  year={2015}
}

@article {Elazar:Shaviv,
    AUTHOR = {Elazar, Boaz and Shaviv, Ary},
     TITLE = {Schwartz functions on real algebraic varieties},
   JOURNAL = {Canad. J. Math.},
  FJOURNAL = {Canadian Journal of Mathematics. Journal Canadien de
              Math\'{e}matiques},
    VOLUME = {70},
      YEAR = {2018},
    NUMBER = {5},
     PAGES = {1008--1037},
      ISSN = {0008-414X},
   MRCLASS = {14P05 (14P20 22E45 46A11 46F05)},
  MRNUMBER = {3831913},
MRREVIEWER = {Zbigniew Szafraniec},
       DOI = {10.4153/CJM-2017-042-6},
       URL = {https://doi.org/10.4153/CJM-2017-042-6},
}

@article{Getz:Hsu:Leslie,
    AUTHOR = {Getz, Jayce R. and Hsu, Chun-Hsien and Leslie, Spencer},
     TITLE = {Harmonic analysis on certain spherical varieties},
   JOURNAL = {J. Eur. Math. Soc. (JEMS)},
  FJOURNAL = {Journal of the European Mathematical Society (JEMS)},
    VOLUME = {27},
      YEAR = {2025},
    NUMBER = {2},
     PAGES = {433--541},
      ISSN = {1435-9855},
   MRCLASS = {11F70 (11F55 11F85 43A32)},
  MRNUMBER = {4859573},
       DOI = {10.4171/jems/1381},
       URL = {https://doi.org/10.4171/jems/1381},
}

@article {SW:minimal,
    AUTHOR = {Savin, Gordan and Woodbury, Michael},
     TITLE = {Structure of internal modules and a formula for the spherical
              vector of minimal representations},
   JOURNAL = {J. Algebra},
  FJOURNAL = {Journal of Algebra},
    VOLUME = {312},
      YEAR = {2007},
    NUMBER = {2},
     PAGES = {755--772},
      ISSN = {0021-8693},
   MRCLASS = {20G05 (20G25 22E50)},
  MRNUMBER = {2333183},
MRREVIEWER = {Naohisa Shimomura},
       DOI = {10.1016/j.jalgebra.2007.01.014},
       URL = {https://doi.org/10.1016/j.jalgebra.2007.01.014},
}

@article {Getz:Quad,
    AUTHOR = {Getz, Jayce R.},
     TITLE = {Summation formulae for quadrics},
   JOURNAL = {Selecta Math. (N.S.)},
  FJOURNAL = {Selecta Mathematica. New Series},
    VOLUME = {31},
      YEAR = {2025},
    NUMBER = {2},
     PAGES = {Paper No. 41, 42},
      ISSN = {1022-1824},
   MRCLASS = {11F70 (11E12 11F27)},
  MRNUMBER = {4887846},
       DOI = {10.1007/s00029-025-01036-7},
       URL = {https://doi.org/10.1007/s00029-025-01036-7},
}

@ARTICLE{GK:auto,
       author = {{Gurevich}, Nadya and {Kazhdan}, David},
        title = "{Automorphic functionals for the minimal representations of groups of type $D_n$ and $E_n$}",
      journal = {arXiv e-prints},
         year = 2024,
        month = mar,
          eid = {arXiv:2403.19640},
        pages = {arXiv:2403.19640},
          doi = {10.48550/arXiv.2403.19640},
archivePrefix = {arXiv},
       eprint = {2403.19640},
 primaryClass = {math.RT},
       adsurl = {https://ui.adsabs.harvard.edu/abs/2024arXiv240319640G}
}

@article {KR:Siegel-Weil,
    AUTHOR = {Kudla, Stephen S. and Rallis, Stephen},
     TITLE = {A regularized {S}iegel-{W}eil formula: the first term
              identity},
   JOURNAL = {Ann. of Math. (2)},
  FJOURNAL = {Annals of Mathematics. Second Series},
    VOLUME = {140},
      YEAR = {1994},
    NUMBER = {1},
     PAGES = {1--80},
      ISSN = {0003-486X},
   MRCLASS = {11F70 (11F27 22E55)},
  MRNUMBER = {1289491},
MRREVIEWER = {Colette M\oe glin},
       DOI = {10.2307/2118540},
       URL = {https://doi.org/10.2307/2118540},
}

@ARTICLE{Hsu:asymptotics,
       author = {{Hsu}, Chun-Hsien},
        title = "{Asymptotics of Schwartz functions}",
      journal = {arXiv e-prints},
         year = 2021,
        month = dec,
          eid = {arXiv:2112.02403},
          pages = {arXiv:2112.02403},
          doi = {10.48550/arXiv.2112.02403},
archivePrefix = {arXiv},
       eprint = {2112.02403},
 primaryClass = {math.NT},
       adsurl = {https://ui.adsabs.harvard.edu/abs/2021arXiv211202403H}
}

@article {minrep:real,
    AUTHOR = {Hilgert, Joachim and Kobayashi, Toshiyuki and M\"{o}llers, Jan},
     TITLE = {Minimal representations via {B}essel operators},
   JOURNAL = {J. Math. Soc. Japan},
  FJOURNAL = {Journal of the Mathematical Society of Japan},
    VOLUME = {66},
      YEAR = {2014},
    NUMBER = {2},
     PAGES = {349--414},
      ISSN = {0025-5645},
   MRCLASS = {22E45 (17C30 33C10)},
  MRNUMBER = {3201818},
MRREVIEWER = {Zhanqiang Bai},
       DOI = {10.2969/jmsj/06620349},
       URL = {https://doi.org/10.2969/jmsj/06620349},
}

@book {Highertrans:II,
    AUTHOR = {Erd\'{e}lyi, Arthur and Magnus, Wilhelm and Oberhettinger,
              Fritz and Tricomi, Francesco G.},
     TITLE = {Higher transcendental functions. {V}ol. {II}},
      NOTE = {Based on notes left by Harry Bateman,
              Reprint of the 1953 original},
 PUBLISHER = {Robert E. Krieger Publishing Co., Inc., Melbourne, FL},
      YEAR = {1981},
     PAGES = {xviii+396},
      ISBN = {0-89874-069-X},
   MRCLASS = {33-02 (01A75)},
  MRNUMBER = {698780},
}

@article {Ikeda:SW,
    AUTHOR = {Ikeda, Tamotsu},
     TITLE = {On the residue of the {E}isenstein series and the
              {S}iegel-{W}eil formula},
   JOURNAL = {Compositio Math.},
  FJOURNAL = {Compositio Mathematica},
    VOLUME = {103},
      YEAR = {1996},
    NUMBER = {2},
     PAGES = {183--218},
      ISSN = {0010-437X,1570-5846},
   MRCLASS = {11F27 (11E45 11F46)},
  MRNUMBER = {1411571},
MRREVIEWER = {Rolf\ Berndt},
       URL = {http://www.numdam.org/item?id=CM_1996__103_2_183_0},
}

@article {Ichino:SW,
    AUTHOR = {Ichino, Atsushi},
     TITLE = {On the regularized {S}iegel-{W}eil formula},
   JOURNAL = {J. Reine Angew. Math.},
  FJOURNAL = {Journal f\"{u}r die Reine und Angewandte Mathematik. [Crelle's
              Journal]},
    VOLUME = {539},
      YEAR = {2001},
     PAGES = {201--234},
      ISSN = {0075-4102,1435-5345},
   MRCLASS = {11F27 (11F46 11F70)},
  MRNUMBER = {1863861},
MRREVIEWER = {Solomon\ Friedberg},
       DOI = {10.1515/crll.2001.076},
       URL = {https://doi.org/10.1515/crll.2001.076},
}
\bibliographystyle{alpha}

\end{document}